\documentclass[11pt,a4paper]{article}
\usepackage[T1]{fontenc}
\usepackage[utf8]{inputenc}
\usepackage{authblk}

\usepackage{amssymb,amsthm,amsmath}
\usepackage[numbers,sort&compress]{natbib}
\usepackage{color}
\usepackage{graphicx}
\usepackage{subfigure}
\usepackage{tikz}
\usepackage[colorlinks,
linkcolor=red,
anchorcolor=blue,
citecolor=green
]{hyperref}





\setcounter{secnumdepth}{4}

\let\newpf\proof \let\proof\relax 
\newenvironment{proof}{\newpf[\proofname]}{\qed\endtrivlist}

\newcommand{\ba}{\overline{A}}

\def\be{\begin{equation}}
\def\ee{\end{equation}}

\def\ba{{\begin{align}}}
\def\ea{{\end{align}}}

\def\bm{\begin{matrix}}
\def\em{\end{matrix}}

\def\0{{\mathbf 0}}

\def\cal{\mathcal}

\newtheorem{Theorem}{Theorem}[section]
\newtheorem{Lemma}{Lemma}[section]
\newtheorem{Proposition}{Proposition}[section]
\newtheorem{Corollary}{Corollary}[section]
\newtheorem{Remark}{Remark}[section]
\newtheorem{Example}{Example}[section]
\newtheorem{Definition}{Definition}[section]

\numberwithin{equation}{section}

\theoremstyle{definition}

\newcommand{\C}{{\mathbb C}}

\newcommand{\R}{{\mathbb R}}

\newcommand{\Z}{{\mathbb Z}}

\def\B0{{\bold{0}}}


\catcode`\@=12

\def\Empty{}
\newcommand\oplabel[1]{
  \def\OpArg{#1} \ifx \OpArg\Empty {} \else
    \label{#1}
  \fi}

%

%

\newcommand{\comm}[1]{}
\newcommand{\comment}[1]{}

\textwidth165mm

\title{Global structure of the spectrum of periodic Non-hermitian  Jacobi operators}
\author[1]{Hui Lu}

\author[2]{Jiangong You}

\affil[1]{School of Mathematics, Nanjing Audit University, Nanjin  {\rm 211815}, China}
\affil[2]{Chern Institute of Mathematics and LPMC, Nankai University, Tianjin \rm{300071}, China}


\date{} 

\begin{document}
\maketitle
\abstract{The global structure of the spectrum of periodic non-Hermitian Jacobi operators is described by the discriminant and its stationary points. We also give necessary and sufficient conditions for real spectrum and single interval spectrum.}

\section{Introduction}

Non-Hermitian operators
\begin{equation}\label{1.1}
	(Ju)_n=a_{n}u_{n+1}+b_nu_n+c_{n-1}u_{n-1}, \ \ n\in \Z,
\end{equation}
acting on $l^2(\Z)$ are mathematical models for open quantum systems.  The periodic  non-Hermitian Jacobi operator $J$, 
with periodic  $\{a_n\},\{b_n\}$, $\{c_n\}$, that is 
$a_{n+N}=a_n,b_{n+N}=b_n,c_{n+N}=c_n,N\in \Z^+$, usually be used as an  approximation  of complex models, such as random and quasi-perioidc models, by physists.

The resolvent set $\rho(J)$ consists of the complex numbers $\lambda$  for which $\lambda  Id-J$ is one-to-one and onto, where $Id$ is the identity. The spectrum of $J$ is $\sigma(J)=\C \backslash \rho(J)$. 

Let
$$
J_{-1 }=\left(\begin{array}{cccc}
	\  \ &  	\  \ & 	\  \ & 	\  \   \\
	&  & & \\
	&  &  &  \\
	a_N& &  &
\end{array}\right), \ \
J_{0 }=\left(\begin{array}{cccc}
	b_{1} & a_1 &  &  \\
	c_1 & b_2 & \ddots & \\
	&  \ddots & \ddots &a_{N-1}\\
	&  & c_{N-1} & b_{N}
\end{array}\right),\ \
J_{1 }=\left(\begin{array}{cccc}
	\ \ &  	\ \ & 	\ \ &  c_N \\
	&  & & \\
	&  &  &  \\
	& &  &
\end{array}\right).$$
These three matrices are all square matrices of order $N$, and all blank positions correspond to 0. 
Then  $J$ can be equivalently written as the  doubly infinite Laurent operator
$$J=\left(\begin{array}{ccccccc}
	\ddots& \ddots &\ddots &  & & \\
	& J_1 & J_0&J_{-1} & &  \\
	&  & J_1 & [J_0]&J_{-1} &   \\
	&  &  &J_1 & J_0&J_{-1}  \\
	& & & & \ddots& \ddots &\ddots
\end{array}\right),$$ 
where $[J_0]$ denotes the $(0,0)$ entry. Denote by  $J(\theta)$  the  $N$-order square matrix 
$$J(\theta )=J_{0 }+J_{1 }e^{i\theta }+J_{-1 }e^{-i\theta },  \ \  \theta \in(-\pi,\pi].$$  
G. K. Kumar and S. H. Kulkarni  (\cite{GS}) proved that 
$$\sigma(J)= \bigcup_{\theta \in(-\pi,\pi]}\sigma(J(\theta ))=\bigcup_{\theta \in(-\pi,\pi]}\left\{ \lambda \in \C \mid \det( \lambda Id-J(\theta ))=0 \right\}.$$
This result can also be directly obtained from the standard Floquet transform (\cite{KP}). 

Direct computation leads to 
$$\det( \lambda Id-J(\theta ))=P(\lambda)-ae^{-i\theta }-ce^{i\theta },$$ 
where  $a= \Pi_{i=1}^Na_i, c=  \Pi_{i=1}^Nc_i $, $P(\lambda)$ is a monic polynomial of degree $N$ independent of $\theta$ which is called  the discriminant of $J$.
Thus,
$$\sigma(J(\theta ))=\left\{ \lambda \in \C \mid P(\lambda)=ae^{-i\theta }+ce^{i\theta } \right\}=P^{-1}(ae^{-i\theta }+ce^{i\theta }) , $$
$$\sigma(J)=\left\{ \lambda \in \C \mid P(\lambda)\in \{ae^{-i\theta }+ce^{i\theta }\mid \theta \in(-\pi,\pi] \} \right\}=P^{-1}(\mathcal{E})  ,$$
where $\mathcal{E} =\{ae^{-i\theta }+ce^{i\theta }\mid \theta \in(-\pi,\pi] \}$.  

Any monic complex polynomial $P$ is the discriminant of some periodic Jacobi operators (may not be unique).
This is proved by Papanicolaou \cite{VGP}, where he considered the periodic operator  (\ref{1.1}) with $a_n=c_n,n=1,2,\cdots,N$. His proof is also valid for $a_n\ne c_n$.

The set $\mathcal{E}$ is an ellipse if  $|a|\neq |c|$, 
$\cal{E}$  is degenerated to  a  line segment if $|a|=|c|\neq 0,  $ especially $\cal{E}=[-2,2]$ for Schr\"odinger operators. \footnote{Proof: Let $a=r_ae^{i\varphi_a}, c=r_ce^{i\varphi_c }$ are the corresponding polar coordinates, with  $\varphi_a,\varphi_c\in(-\pi,\pi]$. Let  
	$\varphi=\frac{\varphi_a+\varphi_c}{2}$, $\widehat{\theta } =\theta-\frac{\varphi_a-\varphi_c}{2}$, then 
	\begin{align*}
		\mathcal{E}&=   \{r_ae^{i(-\theta +\varphi_a)}+r_c e^{i(\theta +\varphi_c)}\mid \theta  \in (-\pi,\pi] \}=  e^{i\varphi} \cdot \{r_ae^{i(-\theta +\varphi_a-\varphi)}+r_c e^{i(\theta +\varphi_c-\varphi)}\mid \theta  \in (-\pi,\pi] \}\\
		&= e^{i\varphi}  \cdot  \{r_ae^{-i\widehat{\theta }}+r_c e^{i\widehat{\theta }}\mid \widehat{\theta } \in (-\pi-\frac{\varphi_a-\varphi_c}{2},\pi-\frac{\varphi_a-\varphi_c}{2}]\}.
	\end{align*}
	If $r_a\neq r_c,\mathcal{E}$ is an ellipse. If $r_a=r_c \neq 0 ,\cal{E}$ is a line segment. 
	If $r_a=r_c = 0 ,\cal{E}=\{0\}.$ 
}

$\cal{E}$  is further degenerated to $\{0\}$ if $a=c=0$. In this case, $\sigma(J)=P^{-1}(\mathcal{E}) =P^{-1}(0)$,  so $\sigma(J)  $ is composed of $N$  zeros (counting the multiplicity) of $P$, which are independent of $\theta$. For all $\theta\in (-\pi,\pi]$, these $N$  zeros are the eigenvalues of $J(\theta )$. If $(u_1(\theta ),u_2(\theta ),...,u_N(\theta ))^T$ is an eigenvector of $J(\theta )$ corresponding to $\lambda$, then  there is a solution to $Ju=\lambda u$ obeying $u_{n+N}(\theta )=e^{-i\theta }u_n(\theta )$ for all $n\in \Z$. 
In particular, if $a_{n_0}=c_{n_0}=0$, then $a_{{n_0}+kN}=c_{{n_0}+kN}=0$ for all $k\in \Z$, and $J$ splits as  $J=\bigoplus_{k \in \mathbb{Z}} A$, where $A$ is a $N$-order matrix. 
For example, if ${n_0}=N$, then $A=J(0 )$. So  the case $a=c=0$ is an easy case and 
we suppose that $a$ and $c$ are not both $0$ from now on. 

It is known that the spectrum of non-Hermitian operators are much complicated than Hermitian operators, (\cite{MIS, FSR, VAT1, HH, ES,KP,BS,VA,NVZ,LV,VAT2,YL,JV,VAT3,GT,OM,CMB,FV,BS,GS,VGP0,VGP}).
It has been proved that the spectrum of periodic Jacobi operators, denoted by $\sigma(J)$, is a union of finite many pieces of analytic curves, which may have finite many intersections(\cite{OK,OV,VGP}). There are  also local descriptions for $\sigma(J)$(\cite{MK,KCS,OV}).

In this paper, we will give a full description of   $\sigma(J)$ by the discriminant $P$, especially by its stationary points
\begin{equation*}\label{stationary point set}
	\cal{S}=\{\lambda  \in \C\ \  |\ P'(\lambda)=0\}.
\end{equation*}
We say a stationary point $\lambda$ is of order $k$ if $P^{(j)}(\lambda)=0, j=1, \cdots, k$ and $P^{(k+1)}(\lambda)\ne 0$. 
We denote by  $\tau (\lambda)$ the order of $\lambda$. For $ \lambda\notin\cal{S}$, we set $\tau (\lambda)=0$ for simplicity.   For convinience, we denote   by 
$$m_P(E,F):=\sum_{\lambda \in E\cap  P^{-1}(F)} \tau(\lambda) $$
the number of the stationary points in $E\cap P^{-1}(F)$ counting the multiplicity  where  $E,F\subset \C$.\footnote{It is obvious that $m_P(E,F)=m_P(\cal{S} \cap E,F)=m_P(E,P(\cal{S} )\cap F)\in \{0,1,2,\cdots,N-1\}$. 
	Moreover, 
	\begin{align*}
		&m_P(E_1 \cap  E_2,F)=m_P(E_1,F )-m_P(E_1 \backslash E_2,F),\    \ m_P(E,F_1\cap F_2)=m_P(E,F_1 )-m_P(E,F_1 \backslash F_2),\\
		&m_P(E_1 \cup  E_2,F)=m_P(E_1,F )+m_P(E_2\backslash E_1,F),\    \ m_P(E,F_1 \cup  F_2)=m_P(E,F_1 )+m_P(E,F_2 \backslash F_1).
	\end{align*}
	Especially, $m_P(\C,F )+m_P(\C,\C \backslash F)=m_P(\C,\C )=\sum_{\lambda\in 	 \C}\tau(\lambda)=\sum_{\lambda\in 	\cal{S}}\tau(\lambda)=N-1$.}

We will prove that the  global structure of $\sigma(J)$ is completely determined by  the  location of  $\cal{S}$ and $P(\cal{S})$.

Let $\gamma=\{z(t)|t\in I\}$ be a simple curve, where $I\subset \R$ is an interval.
We first give some notations.
\begin{Definition}
	A contiuous curve $\gamma_*=\{\lambda(t)|P(\lambda(t))=z(t) \text{ for all }t\in I\}$ contained in $P^{-1}(\gamma)$ is called a band if $P:\gamma_* \rightarrow \gamma$ is one-to-one. 
\end{Definition}
\begin{Remark}
	It is easy to see that each band $\gamma_*$ is a simple curve. If $\gamma$ is not a closed curve, then $P:\gamma_* \rightarrow \gamma$ is homeomorphic. If $\gamma$ is a closed curve, then  $P:\gamma_* \rightarrow \gamma$ is homeomorphic if and only if  $\gamma_*$ is  a closed curve. 
\end{Remark}

\begin{Definition}
	A simple closed curve contained in $P^{-1}(\gamma)$ is called a petal, and the number of petals in $P^{-1}(\gamma)$ is denoted by $\cal{C}_P(\gamma)$. 
	The union of petals sharing a common point is called a flower, the common point is called the center of the flower.
\end{Definition}
\begin{Remark}
	A flower contains at least two petals.	We don't call a single petal a flower.
\end{Remark}

\begin{Definition}
	The maximum  connected subset in $P^{-1}(\gamma)$ is called a  bouguet, and the number of bouguets in $P^{-1}(\gamma)$ is denoted by $\cal{B}_P(\gamma)$.  
\end{Definition}

More precisely, we get the following
\begin{Theorem}\label{main1}
	Suppose that  $|a|\ne |c|$. Let $W$ be the closed domain\footnote{The domain $W$ is closed, let $\mathring{W}$ be the interior of $W$ and  $\partial W$ be the boundary of $W$.} bounded by $\cal E$, then
	
	{\rm(1)} $\sigma(J)=P^{-1}(\mathcal{E}) $  is a bounded closed set, and $\C \backslash P^{-1}(W)$ is connected.
	
	{\rm(2)} {\rm Petal decomposition:}  
	$\sigma(J)$ has a unique decomposition of petals with
	$$\cal{C}_P(\mathcal{E})=1+ m_P(\C,\C \backslash \mathring{W} ). 
	$$
	
	{\rm(3)} {\rm Band decomposition:}  
	There is a unique band decomposition $\sigma(J)=\bigcup^N_{n=1}{\gamma_n }$, such that each petal is composed of bands end to end. Each band  is  piecewise analytic and the no smooth points are in $\cal{S}$.\footnote{If $\lambda$ is a piecewise point of a band, then  there must be other bands pass through $\lambda$.}
	Moreover, 
	let $C$ be a petal  in  $ \sigma(J)$, and $D$ be the domain  bounded by $C$. Then $P(\mathring{D})=\mathring{W}$, and for any $\lambda_0\in \mathring{D}$ the number of bands  contained in $C$ is
	$$\frac{1}{2\pi i}\int_{C} \frac{P'(\lambda)}{P(\lambda)-P(\lambda_0)}d\lambda=1+m_P(\mathring{D},\C).$$
	
	{\rm(4)} {\rm Flower:} The number of flowers  in $ \sigma(J)$ is $\#(\cal{S}\cap \sigma(J)).$ $\lambda$ is a center of a flower if and only if $\lambda\in \cal{S}\cap  \sigma(J)$. If $\lambda$ is a center of a flower, there are $\tau(\lambda)+1$ petals in this flower, and the small neighborhood of $\lambda$ is divided into $2\tau(\lambda)+2$  regions by these $\tau(\lambda)+1$ petals,  each region has an angle   $\frac{\pi}{\tau(\lambda)+1}$.
	
	{\rm(5)} {\rm Bouquet decomposition:} $\sigma(J)$ has a unique bouquet decomposition with
	$$\cal{B}_P(\mathcal{E})=1+ m_P(\C,\C\backslash W). $$
	Morevover, each bouquet is either a single petal or composed of  flowers. 
	Let $\Gamma$ be a bouquet in $ \sigma(J)$, and $\Omega$ be the domain  bounded by $\Gamma$. Then, the number of flowers in $\Gamma$ is $\#(\cal{S}\cap \Gamma),$ the number of petals  contained in $\Gamma$ is 
	$1+m(\Gamma,\C), $ and the number of bands  contained in $\Gamma$ is 
	$1+m(\Omega,\C).$
\end{Theorem}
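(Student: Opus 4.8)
The plan is to treat $P: \C \to \C$ as a branched cover of degree $N$, where the critical points are exactly $\cal{S}$ and the critical values are $P(\cal{S})$. All five parts will be derived from analyzing the preimage $P^{-1}(\cal{E})$ via the local and global structure of this branched cover. First, I would set up the basic topology: since $P$ is a proper map, $P^{-1}(W)$ is compact (giving boundedness and closedness of $\sigma(J)$ in part (1)), and since $\C \setminus W$ is connected and simply connected and contains no critical values except possibly at infinity, the restriction $P: \C \setminus P^{-1}(W) \to \C \setminus W$ is a covering map; because $\C \setminus W$ is connected with $P$ proper of degree $N$ and the point at infinity has a single preimage (as $P$ is a polynomial), a monodromy/connectedness argument shows $\C \setminus P^{-1}(W)$ is connected, completing part (1).

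Next, for parts (2)--(5) I would pass between the cases $\gamma = \cal E$ (a simple closed curve, since $|a| \ne |c|$ makes $\cal E$ a genuine ellipse) and its preimage. The key structural input is the argument principle: for a Jordan curve $\gamma$ bounding a domain $D'$ with $P^{-1}(\gamma)$ containing no critical points on $\gamma$, each connected component of $P^{-1}(\gamma)$ is a finite union of bands, and the cyclic gluing of bands into simple closed curves (petals) is forced by the covering structure over $\cal E \setminus P(\cal S)$. The counts all come from Riemann--Hurwitz-type bookkeeping: the total number of bands is $N$ (the degree), the number of petals exceeds the number of "interior" critical points (counted with multiplicity via $m_P$) by the number of connected components at "top level," and the integral $\frac{1}{2\pi i}\int_C \frac{P'}{P - P(\lambda_0)}\,d\lambda$ literally counts, by the argument principle, the number of solutions of $P(\lambda) = P(\lambda_0)$ inside the domain $D$ bounded by the petal $C$ — which is $1 + m_P(\mathring D, \C)$ once one checks $P(\mathring D) = \mathring W$ and that each non-critical value is attained once per band. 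For part (4), the local normal form $P(\lambda) - P(\lambda_0) \sim (\lambda - \lambda_0)^{\tau(\lambda_0)+1}$ near a critical point shows that $P^{-1}(\cal E)$ near such a point looks like $\tau+1$ arcs through $\lambda_0$ meeting at equal angles $\pi/(\tau+1)$, i.e. a flower with $\tau+1$ petals; conversely a point where several petals cross must be a branch point, giving the biconditional.

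For part (5), I would argue that bouquets (maximal connected components of $\sigma(J)$) are obtained from petals by gluing along shared centers, i.e. along $\cal S \cap \sigma(J)$; the count $\cal B_P(\cal E) = 1 + m_P(\C, \C \setminus W)$ mirrors the petal count of part (2) but with the open domain $\mathring W$ replaced by the closed $W$ — the difference being exactly the critical values lying on $\cal E$ itself, which merge otherwise-separate petals into a single bouquet. The within-bouquet counts then follow by restricting the global argument-principle formulas to the subdomain $\Omega$ bounded by $\Gamma$. The main obstacle I anticipate is the combinatorial gluing argument in parts (2), (3) and (5): one must show the decomposition into petals/bands is \emph{unique} and that the local pictures patch consistently into a global one — in particular that no component of $P^{-1}(\cal E)$ fails to close up into petals, which requires carefully tracking the monodromy of $P$ over $\cal E$ relative to its critical values and ruling out "open" components. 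Handling the boundary case where critical values land precisely on $\cal E$ (so that $P^{-1}(\cal E)$ acquires extra singular crossings) is the delicate point distinguishing $\mathring W$ from $W$ in the various formulas, and I would isolate it as a separate lemma about the local behavior of $P^{-1}$ of an arc through a critical value.
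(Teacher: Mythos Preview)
Your plan is sound and would succeed, but the route differs from the paper's and there is one slip to correct. In part~(1) you assert that $\C\setminus W$ ``contains no critical values except possibly at infinity'' --- this is false in general (indeed the formulas in (2) and (5) are designed precisely to count critical values landing outside $\mathring W$), so $P$ need not be an unramified cover over $\C\setminus W$. The connectedness of $\C\setminus P^{-1}(W)$ follows instead either from the paper's direct argument (if there were a bounded complementary component $\Lambda$, pick $\lambda\in\Lambda$ and lift an unbounded arc in $\C\setminus W$ starting at $P(\lambda)$; the lift through $\lambda$ is unbounded and must cross $P^{-1}(W)$, a contradiction), or from your branched-cover viewpoint once repaired: since $\infty$ is totally ramified of order $N$ for a degree-$N$ polynomial, the component of the preimage of the complement of $W$ whose closure contains $\infty$ already accounts for the full degree $N$, leaving no room for other components.

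For the counts in (2), (3), (5), the paper does \emph{not} invoke Riemann--Hurwitz. It runs a surgery-and-induction scheme: begin with a small circle $\gamma_0$ about a regular value, where $P^{-1}(\gamma_0)$ is manifestly $N$ disjoint loops, then deform $\gamma_0$ to $\cal E$, performing a local ``expansion'' or ``contraction'' each time the moving curve sweeps through a critical value and tracking exactly how $\cal C_P$ and $\cal B_P$ change at each step. Your Riemann--Hurwitz bookkeeping yields the same numbers more conceptually, but to make it rigorous you must still establish (as the paper does separately) that each petal $C$ bounds a Jordan domain $D$ with $P(\mathring D)=\mathring W$ (so that $\chi(\mathring D)=1$), and you need a device for the \emph{uniqueness} of the petal decomposition, which the paper supplies by pairing the $2(\tau+1)$ local arcs at each critical point on $\sigma(J)$ according to which adjacent sectors map into $\mathring W$. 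The surgery approach buys an elementary, self-contained argument with no Euler-characteristic machinery; your approach buys a shorter, more structural derivation once the branched-cover formalism is in place.
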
	
\begin{Corollary}
	Generically, $\sigma(J)$ is composed of single petals\footnote{Because, generically $|a|\ne |c|$ and $m_P(\C,\mathcal{E})=0$.} with 
	$$	\cal{C}_P(\mathcal{E})=\cal{B}_P(\mathcal{E})=1+ m_P(\C,\C\backslash W). $$
\end{Corollary}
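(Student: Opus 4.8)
The plan is to deduce the Corollary directly from \thmref{main1} by specializing to the two genericity conditions flagged in the footnote: $|a|\ne|c|$ (so that \thmref{main1} applies at all) and $m_P(\C,\mathcal{E})=0$ (no stationary point of $P$ lands on the ellipse). Granting these, everything reduces to a short bookkeeping with the additivity identities for $m_P$ together with parts (2), (4), (5) of \thmref{main1}; the only genuinely new input is verifying that both conditions hold generically.

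I would first record the genericity. Regard the parameters $(a_n,b_n,c_n)_{n=1}^N\in\C^{3N}$ as the ambient space; the coefficients of $P$ depend polynomially on them, while $a=\prod_i a_i$, $c=\prod_i c_i$, and hence the ellipse $\mathcal{E}$, depend only on the $a_n,c_n$. The condition $|a|=|c|$ cuts out a proper real-analytic (real-codimension-one) subset, so $|a|\ne|c|$ holds off a nowhere-dense null set. For the second condition, recall $m_P(\C,\mathcal{E})=\sum_{\lambda\in\cal{S}\cap P^{-1}(\mathcal{E})}\tau(\lambda)$ counts the stationary points whose critical value lies on $\mathcal{E}$. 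Fixing $a,c$ (hence $\mathcal{E}$) and varying the $b_n$ moves the finitely many critical values of $P$ while leaving $\mathcal{E}$, a one-real-dimensional curve, fixed; for generic $b_n$ these finitely many values avoid $\mathcal{E}$. Thus the bad locus $\{m_P(\C,\mathcal{E})>0\}$ lies in the zero set of a real-analytic function that is not identically zero, hence is nowhere dense and of measure zero. Consequently both conditions hold simultaneously on an open dense full-measure set, which is what \emph{generically} means here.

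Next I would carry out the reduction. Since $W$ is closed with $\partial W=\mathcal{E}$, one has the disjoint decomposition $\C\backslash\mathring{W}=(\C\backslash W)\cup\mathcal{E}$, and the additivity identity from the footnote (with $F_1=\C\backslash W$, $F_2=\mathcal{E}$, so $F_2\backslash F_1=\mathcal{E}\cap W=\mathcal{E}$) gives
\begin{equation*}
m_P(\C,\C\backslash\mathring{W})=m_P(\C,\C\backslash W)+m_P(\C,\mathcal{E}).
\end{equation*}
Substituting into part (2) and comparing with part (5) of \thmref{main1} yields
\begin{equation*}
\cal{C}_P(\mathcal{E})=1+m_P(\C,\C\backslash W)+m_P(\C,\mathcal{E}),\qquad \cal{B}_P(\mathcal{E})=1+m_P(\C,\C\backslash W).
\end{equation*}
Under $m_P(\C,\mathcal{E})=0$ the two counts coincide and both equal $1+m_P(\C,\C\backslash W)$, which is exactly the displayed formula.

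Finally I would rule out flowers. Because $\tau(\lambda)\ge1$ for every $\lambda\in\cal{S}$, the vanishing $m_P(\C,\mathcal{E})=\sum_{\lambda\in\cal{S}\cap P^{-1}(\mathcal{E})}\tau(\lambda)=0$ forces $\cal{S}\cap P^{-1}(\mathcal{E})=\cal{S}\cap\sigma(J)=\emptyset$. By part (4) of \thmref{main1} the number of flowers equals $\#(\cal{S}\cap\sigma(J))=0$, so $\sigma(J)$ has no flower; by part (5) each bouquet is then either a single petal or a union of flowers, and having excluded flowers, every bouquet must be a single petal. Hence $\sigma(J)$ is a disjoint union of $\cal{B}_P(\mathcal{E})=\cal{C}_P(\mathcal{E})$ single petals, completing the proof. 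The main obstacle is purely the genericity step — pinning down the ambient parameter space and confirming that $\{m_P(\C,\mathcal{E})>0\}$ is a proper real-analytic (hence negligible) subset; once that is established, the $m_P$ identities and \thmref{main1} do the rest mechanically.
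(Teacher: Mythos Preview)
Your proposal is correct and follows exactly the approach the paper intends: the paper gives no separate proof but only the footnote hint ``generically $|a|\ne |c|$ and $m_P(\C,\mathcal{E})=0$,'' and you have simply unpacked this into the obvious bookkeeping with \thmref{main1}(2),(4),(5) and the additivity of $m_P$. The genericity justification you supply is more explicit than anything in the paper, but the mathematical content is the same.
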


In Theorem \ref{main1}, we don't require $N$ to be the minimum positive period. Suppose $J$ is an operator with minimum positive periodic $N$, then $kN,k\in\Z^+$ is also a periodic of $J$. Let $P_{k}$ be the discriminant, $\mathcal{E}_k =\{a^ke^{-i\theta }+c^ke^{i\theta }\mid \theta \in(-\pi,\pi] \}$ and  $W_k$ be the domain bounded by $\cal E_k$.  Then, we have 
\begin{Corollary}\label{invariant1}
	$\sigma(J)=P_k^{-1}(\mathcal{E}_k) $, for all $k\in\Z^+$. In particular, $m_{P_k}(\C,\C \backslash \mathring{W} _k)$ and  $m_{P_k}(\C,\C \backslash W_k)$ are  independent of $k$.\footnote{Because the number of petals and bouguets in $\sigma(J)$ are  independent of $k$.}
\end{Corollary}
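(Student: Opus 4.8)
The plan is to derive everything from the Floquet/Kumar--Kulkarni description recalled in the introduction, applied to $J$ viewed with period $kN$ instead of $N$, together with a monodromy--matrix identity linking $P_k$ to $P$. First I would observe that since $\{a_n\},\{b_n\},\{c_n\}$ are $N$-periodic they are in particular $kN$-periodic, with $\prod_{i=1}^{kN}a_i=a^k$ and $\prod_{i=1}^{kN}c_i=c^k$; hence the very same computation that yields $\sigma(J)=P^{-1}(\mathcal{E})$, carried out with period $kN$, yields $\sigma(J)=P_k^{-1}(\mathcal{E}_k)$ with $\mathcal{E}_k=\{a^ke^{-i\theta}+c^ke^{i\theta}\mid\theta\in(-\pi,\pi]\}$, for every $k\in\Z^+$. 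To make the relation between $P_k$ and $P$ explicit I would introduce the one--period transfer (monodromy) matrix $M(\lambda)$ of the recursion $a_nu_{n+1}+b_nu_n+c_{n-1}u_{n-1}=\lambda u_n$; comparing characteristic polynomials gives $\det M(\lambda)=c/a$ and $P(\lambda)=a\,\operatorname{tr}M(\lambda)$, and since the $kN$-step monodromy is $M(\lambda)^k$ one gets $P_k(\lambda)=a^k\operatorname{tr}\!\bigl(M(\lambda)^k\bigr)$, which is indeed monic of degree $kN$. As the eigenvalues of $M(\lambda)^k$ are the $k$-th powers of those of $M(\lambda)$, and (by the same reasoning as for $P$) $\lambda\in P_k^{-1}(\mathcal{E}_k)$ holds exactly when $M(\lambda)^k$ has an eigenvalue of modulus $1$, i.e. exactly when $M(\lambda)$ does, the condition defining $P_k^{-1}(\mathcal{E}_k)$ is independent of $k$; this reconfirms $\sigma(J)=P_k^{-1}(\mathcal{E}_k)=P^{-1}(\mathcal{E})$.

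For the second assertion I would note that $|a_k|=|a|^k\ne|c|^k=|c_k|$ whenever $|a|\ne|c|$, so Theorem~\ref{main1} applies to $J$ regarded with period $kN$; parts (2) and (5) give
$$
m_{P_k}(\C,\C\setminus\mathring{W}_k)=\mathcal{C}_{P_k}(\mathcal{E}_k)-1,\qquad
m_{P_k}(\C,\C\setminus W_k)=\mathcal{B}_{P_k}(\mathcal{E}_k)-1 .
$$
Now $\mathcal{C}_{P_k}(\mathcal{E}_k)$ is the number of petals in the petal decomposition of the set $P_k^{-1}(\mathcal{E}_k)=\sigma(J)$, and $\mathcal{B}_{P_k}(\mathcal{E}_k)$ is the number of bouquets, i.e. of maximal connected subsets of $\sigma(J)$. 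Both quantities depend only on the point set $\sigma(J)$ and not on the auxiliary pair $(P_k,\mathcal{E}_k)$ used to present it, hence they are independent of $k$; subtracting $1$ shows that $m_{P_k}(\C,\C\setminus\mathring{W}_k)$ and $m_{P_k}(\C,\C\setminus W_k)$ are independent of $k$ as well.

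The only delicate point is the last sentence, namely that ``number of petals'' and ``number of bouquets'' are invariants of the point set $\sigma(J)$ alone. For bouquets this is immediate from the definition, a bouquet being a maximal connected subset and hence a connected component of $\sigma(J)$. For petals it is precisely the uniqueness of the petal decomposition asserted in Theorem~\ref{main1}(2) that is needed, since a priori a different polynomial $P_k$ might organize $P_k^{-1}(\mathcal{E}_k)$ into a different family of simple closed curves; once that uniqueness is invoked the corollary follows with no further computation. The monodromy identity $P_k=a^k\operatorname{tr}(M^k)$ is used only to make the equality $\sigma(J)=P_k^{-1}(\mathcal{E}_k)$ concrete and to exhibit the Chebyshev-type relation between $P_k$ and $P$; it is not strictly necessary, as that equality also follows directly by applying the Floquet transform with period $kN$.
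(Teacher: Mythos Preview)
Your proposal is correct and follows essentially the same approach as the paper, which treats the corollary as an immediate consequence of Theorem~\ref{main1} with only the footnote ``Because the number of petals and bouquets in $\sigma(J)$ are independent of $k$'' as justification. Your elaboration is sound: the equality $\sigma(J)=P_k^{-1}(\mathcal{E}_k)$ is exactly the introduction's Floquet computation redone with period $kN$, and your observation that petals (simple closed curves in $\sigma(J)$) and bouquets (connected components of $\sigma(J)$) are intrinsic to the point set $\sigma(J)$ is precisely what the footnote asserts; the monodromy-matrix digression is correct but, as you note, unnecessary.
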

By Theorem \ref{main1}, the global structure of $\sigma(J)$ depends on the location of $\cal{S}$ and $P(\cal{S})$. For example, we directly have the following corollaries for  $|a|\ne |c|$.
\begin{Corollary}\label{bands intersect}
	At most one intersection between any two bands (or petals).\footnote{Because $\C \backslash P^{-1}(W)$ is connected.}
\end{Corollary}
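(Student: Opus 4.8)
The approach is a proof by contradiction resting on the Jordan curve theorem together with the openness of $P$ and the argument principle. Suppose two distinct bands (or petals) $\gamma_1,\gamma_2\subseteq\sigma(J)$ meet in two distinct points $\lambda_1\ne\lambda_2$. Each $\gamma_i$ is a simple curve, so it contains a simple sub-arc $\sigma_i$ with endpoints $\lambda_1,\lambda_2$; choosing the two common points appropriately we may assume $\sigma_1\ne\sigma_2$, so that $\gamma_1\cup\gamma_2$ contains a Jordan curve $\Gamma\subseteq\sigma_1\cup\sigma_2\subseteq\sigma(J)=P^{-1}(\mathcal{E})$. Let $\Omega$ be the bounded Jordan domain it bounds.

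First I would show $\overline\Omega\subseteq P^{-1}(W)$. The loop $P|_\Gamma$ lies on the ellipse $\mathcal{E}=\partial W$, hence is null-homotopic in $\C\setminus\{w\}$ for every $w\notin W$; so its winding number about any such $w$ is $0$, and the argument principle gives that $P-w$ has no zero in $\Omega$. Thus $P(\Omega)\subseteq W$, and since $P$ is nonconstant, $P(\Omega)$ is open, so $P(\Omega)\subseteq\mathring{W}$. Consequently $\Omega\cap\sigma(J)=\Omega\cap P^{-1}(\mathcal{E})=\emptyset$, and $\Omega$ is therefore a connected component of $P^{-1}(\mathring{W})$; being a Jordan domain it is simply connected.

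Next I would determine the covering degree. By the argument principle, for $w\in\mathring{W}$ the number of zeros of $P-w$ in $\Omega$ equals the constant $k:=$ winding number of $P|_\Gamma$ about $w$; since $\Omega\ne\emptyset$ and $P(\Omega)$ is open in $\mathring{W}$ we get $k\ge1$. On the other hand $\Gamma$ is the concatenation of the two arcs $\sigma_1,\sigma_2$, on each of which $P$ is one-to-one; their images are embedded arcs of the circle $\mathcal{E}$ with the common endpoints $P(\lambda_1),P(\lambda_2)$, so each of them is one of the two arcs of $\mathcal{E}$ cut out by these points, and this forces $k\in\{-1,0,1\}$. Hence $k=1$, so $P\colon\Omega\to\mathring{W}$ is a proper degree-one holomorphic map, i.e. a biholomorphism that extends to a homeomorphism $\overline\Omega\to W$. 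In particular $P|_\Gamma$ is one-to-one, so $\Gamma$ is a single band, and by Theorem~\ref{main1}(3) this band exhausts the petal it bounds.

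Finally I would derive the contradiction with the connectedness of $\C\setminus P^{-1}(W)$ (Theorem~\ref{main1}(1)). As $\Gamma$ is a single band, the distinct bands $\gamma_1,\gamma_2$ cannot both lie in $\Gamma$; say $\gamma_1$ strictly contains $\sigma_1$, so $\gamma_1$ exits $\overline\Omega$ through $\lambda_1$ or $\lambda_2$. Then that vertex carries at least three local branches of $\sigma(J)$, hence is a stationary point of order $\tau\ge1$; near it $P$ is a $(\tau+1)$-to-one branched map, $\mathcal{E}$ locally separates the plane into a $\mathring{W}$-side and a $\C\setminus W$-side, and $\overline\Omega$ occupies exactly one of the $\tau+1$ preimage sectors on the $\mathring{W}$-side (a single such sector already maps bijectively onto a neighbourhood of $P(\lambda_1)$ in $\mathring{W}$, and $P|_\Omega$ is injective). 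The other $\tau\ge1$ sectors on the $\mathring{W}$-side belong to components of $P^{-1}(\mathring{W})$ other than $\Omega$ — other filled petals — and the interlaced sectors on the $\C\setminus W$-side lie in $\C\setminus P^{-1}(W)$. Following the branch $\gamma_1$ out of $\overline\Omega$ and matching these neighbouring petals and sectors along $\Gamma$, one exhibits a bounded open subset of $\C\setminus P^{-1}(W)$ that the closed set $P^{-1}(W)$ separates from $\infty$, contradicting connectedness. The main obstacle is precisely this last step: turning the local picture at the stationary point $\lambda_1$ (or $\lambda_2$) into a genuine bounded hole of $P^{-1}(W)$; by contrast the first two steps are routine consequences of the openness of $P$ and the argument principle. (If one knew a priori that distinct bands of the decomposition cannot share a nondegenerate sub-arc, this final step would collapse: $\gamma_1$ and $\Gamma$ would then share $\sigma_1$, forcing $\gamma_1=\Gamma$.)
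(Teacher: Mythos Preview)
Your route is genuinely different from the paper's. The paper gives no separate argument beyond the footnote: once one knows that $\C\setminus P^{-1}(W)$ is connected (Theorem~\ref{main1}(1)) and that around any point of $\sigma(J)$ the sectors cut out by $\sigma(J)$ alternate between $P^{-1}(\mathring W)$ and $\C\setminus P^{-1}(W)$ (Proposition~\ref{local1}/Theorem~\ref{main1}(4)), two petals meeting in two points immediately produce a bounded pocket of $\C\setminus P^{-1}(W)$, since their interiors are disjoint components of $P^{-1}(\mathring W)$ and the sector between them at a common point lies in the complement. That is the whole argument; no degree theory is needed.

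Your Steps~1--2 are correct for \emph{bands}, and in fact they already finish the proof there: once you know $\Gamma$ is a single-band petal you should invoke exactly the fact you bracket at the end --- distinct bands cannot share a nondegenerate sub-arc, because at every non-stationary point $P^{-1}(\mathcal E)$ is locally a single smooth arc (Proposition~\ref{local1}/Remark~\ref{analytic}). Then $\sigma_1\subset\gamma_1\cap\Gamma$ forces $\gamma_1=\Gamma$ and likewise $\gamma_2=\Gamma$, a contradiction. Your Step~3 is an unnecessary detour, and as you yourself note it does not close.

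There is, however, a real gap for \emph{petals}. In Step~2 you use that $P|_{\sigma_i}$ is one-to-one to bound the winding number by $1$. That holds for bands by definition, but a sub-arc of a multi-band petal can wrap around $\mathcal E$ several times, so the bound $k\le 1$ fails and the degree-one conclusion collapses. For petals you should instead argue as the paper intends: the open disks $\mathring D_1,\mathring D_2$ bounded by the two petals are distinct components of $P^{-1}(\mathring W)$, hence disjoint; if $C_1\cap C_2$ has two points then $\C\setminus(D_1\cup D_2)$ has a bounded component $V$, and by the alternating-sector picture at an intersection point $V$ meets $\C\setminus P^{-1}(W)$, contradicting connectedness.

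One further caution: your implicit claim $P(\lambda_1)\ne P(\lambda_2)$ uses that $\lambda_1,\lambda_2$ lie in the \emph{interior} of the parametrizing interval of a band. If both intersection points are band endpoints over the basepoint of $\mathcal E$ (the situation of a two-band petal), they have the same image and your winding-number bound again breaks down; this edge case should be separated out explicitly.
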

\begin{Corollary}\label{one bouquet}
	$\sigma(J)$ has only one bouquet $\Leftrightarrow \cal{B}_P(\mathcal{E})=1\Leftrightarrow m_P(\C,\C\backslash W)=0\Leftrightarrow m_P(\C, W)=N-1\Leftrightarrow P(\cal{S})\subset W$.
\end{Corollary}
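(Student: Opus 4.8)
The plan is to obtain the whole chain of equivalences by combining the bouquet-count formula of \thmref{main1}(5) with the elementary bookkeeping for $m_P$ recorded in the footnotes. The first equivalence, ``$\sigma(J)$ has only one bouquet'' $\Leftrightarrow$ $\cal{B}_P(\mathcal{E})=1$, is just the definition of $\cal{B}_P$ as the number of maximal connected subsets of $P^{-1}(\gamma)$ applied to $\gamma=\mathcal{E}$, using $\sigma(J)=P^{-1}(\mathcal{E})$; note $\sigma(J)\ne\emptyset$ since $P$ has degree $N\ge1$ and $\mathcal{E}\ne\emptyset$, so there is always at least one bouquet. For the second equivalence I would simply quote $\cal{B}_P(\mathcal{E})=1+m_P(\C,\C\backslash W)$ from \thmref{main1}(5): as $m_P(\C,\C\backslash W)$ is a non-negative integer, $\cal{B}_P(\mathcal{E})=1$ holds iff $m_P(\C,\C\backslash W)=0$.

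The third equivalence, $m_P(\C,\C\backslash W)=0\Leftrightarrow m_P(\C,W)=N-1$, comes from the additivity identity $m_P(\C,F)+m_P(\C,\C\backslash F)=m_P(\C,\C)=N-1$ taken with $F=W$: every stationary point $\lambda$ has $P(\lambda)$ lying in either the closed set $W$ or in its complement, and the two counts partition the total multiplicity $N-1$. For the last equivalence, $m_P(\C,W)=N-1\Leftrightarrow P(\cal{S})\subset W$, I would unwind $m_P(\C,W)=\sum_{\lambda\in\cal{S}:\,P(\lambda)\in W}\tau(\lambda)$; since $\sum_{\lambda\in\cal{S}}\tau(\lambda)=N-1$ and $\tau(\lambda)\ge1$ for every $\lambda\in\cal{S}$, this truncated sum attains the value $N-1$ exactly when no stationary point is excluded, i.e. precisely when $P(\lambda)\in W$ for all $\lambda\in\cal{S}$, which is the statement $P(\cal{S})\subset W$.

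There is no genuine obstacle here: all the analytic content sits in \thmref{main1}(5), and the corollary is an exercise in assembling definitions together with the $m_P$-calculus. The only points requiring a little care are keeping the standing hypothesis $|a|\ne|c|$ (which \thmref{main1} requires) in force throughout, and being consistent about $W$ being closed, so that ``$P(\lambda)\in W$'' and ``$P(\lambda)\in\C\backslash W$'' really do form a partition of the stationary values and the additivity identity applies verbatim.
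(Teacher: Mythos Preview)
Your proposal is correct and matches the paper's intended approach: the paper presents this corollary as an immediate consequence of \thmref{main1}(5) together with the $m_P$-identities recorded in the footnotes, without giving a separate proof. You have simply written out the (trivial) details of that deduction, including the additivity $m_P(\C,W)+m_P(\C,\C\backslash W)=N-1$ and the observation that $m_P(\C,W)=N-1$ forces every stationary value into $W$.
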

\begin{Corollary}\label{one petal}
	$\sigma(J)$ has only one petal \footnote{ This petal is formed by $N$ bands.} $\Leftrightarrow \cal{C}_P(\mathcal{E})=1\Leftrightarrow m_P(\C, \C \backslash \mathring{W})=0\Leftrightarrow m_P(\C, \mathring{W})=N-1\Leftrightarrow P(\cal{S})\subset \mathring{W}$.
\end{Corollary}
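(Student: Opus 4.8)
The plan is to read the whole chain off Theorem~\ref{main1}(2) together with the elementary identities for $m_P$ collected in the footnote to its definition; beyond Theorem~\ref{main1} nothing new is needed, so this is a short formal computation. The first equivalence \emph{($\sigma(J)$ has only one petal $\Leftrightarrow\cal{C}_P(\mathcal{E})=1$)} is simply the definition of $\cal{C}_P(\mathcal{E})$ as the number of petals of $P^{-1}(\mathcal{E})=\sigma(J)$. For the second, I would substitute the petal count $\cal{C}_P(\mathcal{E})=1+m_P(\C,\C\setminus\mathring{W})$ of Theorem~\ref{main1}(2): since $m_P(\C,\C\setminus\mathring{W})=\sum\tau(\lambda)$ is a sum of non-negative integers, it equals $0$ exactly when $\cal{C}_P(\mathcal{E})=1$. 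Then I would apply $m_P(\C,F)+m_P(\C,\C\setminus F)=N-1$ with $F=\mathring{W}$ to turn $m_P(\C,\C\setminus\mathring{W})=0$ into $m_P(\C,\mathring{W})=N-1$.

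For the last equivalence $m_P(\C,\mathring{W})=N-1\Leftrightarrow P(\cal{S})\subset\mathring{W}$, I would unwind $m_P(\C,\mathring{W})=\sum_{\lambda\in P^{-1}(\mathring{W})}\tau(\lambda)$ and compare it with $\sum_{\lambda\in\C}\tau(\lambda)=\sum_{\lambda\in\cal{S}}\tau(\lambda)=N-1$. As all summands are non-negative, the equality $m_P(\C,\mathring{W})=N-1$ holds iff $\tau(\lambda)=0$ whenever $P(\lambda)\notin\mathring{W}$; equivalently every stationary point $\lambda$ (those are exactly the $\lambda$ with $\tau(\lambda)\ge1$) satisfies $P(\lambda)\in\mathring{W}$, that is $P(\cal{S})\subset\mathring{W}$. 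The converse is the same line of equalities read backwards.

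For the footnote assertion that the unique petal is built from $N$ bands, I would note that by Theorem~\ref{main1}(3) the global band decomposition $\sigma(J)=\bigcup_{n=1}^{N}\gamma_n$ always consists of $N$ bands and each petal is composed of bands end to end, so a single petal must carry all $N$ of them; equivalently this is $1+m_P(\mathring{D},\C)=N$, which also follows because $\sigma(J)=C$ and the connectedness of $\C\setminus P^{-1}(W)$ from Theorem~\ref{main1}(1) force $P^{-1}(\mathring{W})\subset\mathring{D}$, hence $\cal{S}\subset\mathring{D}$. There is no genuine obstacle in this corollary; the only point needing a moment's care is the bookkeeping of complements---making sure that $\C\setminus\mathring{W}$ is exactly the set to which the $m_P$-identity applies---and checking that ``one petal'' as counted by $\cal{C}_P$ coincides with the notion used in the decomposition of Theorem~\ref{main1}(2).
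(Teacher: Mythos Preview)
Your argument is correct and is exactly the unpacking the paper has in mind: the corollary is stated without proof as a direct consequence of Theorem~\ref{main1}(2) together with the footnoted identity $m_P(\C,F)+m_P(\C,\C\setminus F)=N-1$, and your derivation of each equivalence (and of the footnote via Theorem~\ref{main1}(3)) matches this intended reading.
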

\begin{Corollary}\label{one band}
	A petal $C$ is composed of one band $\Leftrightarrow P(C)=\cal E$ is one-to-one $\Leftrightarrow\frac{1}{2\pi i}\int_{C} \frac{P'(\lambda)}{P(\lambda)-P(\lambda_0)}d\lambda=1$, for any $\lambda_0\in \mathring{D}$  $\Leftrightarrow P(\mathring{D})=\mathring{W}$ is one-to-one $\Leftrightarrow m_P(\mathring{D},\C)=0 $  $\Leftrightarrow \cal{S}\cap \mathring{D}=\emptyset$.
\end{Corollary}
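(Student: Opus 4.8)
The plan is to decode the whole chain of equivalences from Theorem~\ref{main1}(3), which already carries most of the weight. Fix a petal $C$ and let $D$ be the closed domain it bounds, with interior $\mathring{D}$ and boundary $\partial D=C$. By Theorem~\ref{main1}(3), $P(\mathring{D})=\mathring{W}$, and also $P(C)=\cal{E}=\partial W$ (the inclusion $P(C)\subseteq\cal{E}$ is clear since $C\subset\sigma(J)=P^{-1}(\cal{E})$, and equality follows because $P(D)$ is compact and contains $\mathring{W}$). Hence for any $\lambda_0\in\mathring{D}$ we have $P(\lambda_0)\in\mathring{W}$, so $P(\lambda)\ne P(\lambda_0)$ on $C$ and the winding integral $I:=\frac{1}{2\pi i}\int_{C}\frac{P'(\lambda)}{P(\lambda)-P(\lambda_0)}\,d\lambda$ is well defined; Theorem~\ref{main1}(3) further gives that $I$ equals the number of bands contained in $C$, which equals $1+m_P(\mathring{D},\C)$. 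Combining this with $m_P(\mathring{D},\C)=\sum_{\lambda\in\mathring{D}}\tau(\lambda)$ and $\tau\ge 1$ on $\cal{S}$, the four conditions ``$C$ is composed of one band'', ``$I=1$'', ``$m_P(\mathring{D},\C)=0$'' and ``$\cal{S}\cap\mathring{D}=\emptyset$'' are already seen to be pairwise equivalent.

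Next I would tie in ``$P|_C:C\to\cal{E}$ is one-to-one''. If $C$ consists of a single band $\gamma_*$, then $C=\gamma_*$ and $P|_C$ is one-to-one onto $\cal{E}$ by the definition of a band. Conversely, if $P|_C:C\to\cal{E}$ is one-to-one, then --- $C$ and $\cal{E}$ being simple closed curves --- it is a homeomorphism (the Remark following the definition of a band), so for $\lambda_0\in\mathring{D}$ the loop $\cal{E}=P(C)$ winds exactly once around $P(\lambda_0)\in\mathring{W}$; the argument principle yields $I=1$, so $C$ is composed of one band.

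The one step that requires a genuine argument is the tie-in with ``$P(\mathring{D})=\mathring{W}$ is one-to-one''. I would note that $P|_{\mathring{D}}:\mathring{D}\to\mathring{W}$ is a surjective proper holomorphic map (properness being immediate from $P(\partial D)=\cal{E}=\partial W$ and the compactness of $D$), hence a branched covering of some degree $d\ge 1$ between two topological disks. By the argument principle $I$ is the number of $P(\lambda_0)$-preimages in $D$ counted with multiplicity, i.e.\ $I=d$; comparing with $I=1+m_P(\mathring{D},\C)$ gives $d=1+m_P(\mathring{D},\C)$. Since a branched covering is injective exactly when $d=1$ (for $d\ge 2$ any value off the finite branch locus has $d$ distinct preimages), $P|_{\mathring{D}}$ is one-to-one if and only if $m_P(\mathring{D},\C)=0$, which closes the loop of equivalences.

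The main --- and only mild --- obstacle will be the properness of $P|_{\mathring{D}}$ and the identification $I=d$ of the winding integral with the covering degree, i.e.\ the multiplicity bookkeeping along $\partial D=C$. Both facts, however, are essentially already built into the proof of Theorem~\ref{main1}(3), so this amounts to a short verification via the argument principle rather than a new difficulty.
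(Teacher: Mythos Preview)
Your proposal is correct and is exactly the natural unpacking of Theorem~\ref{main1}(3); the paper itself gives no separate proof for this corollary, treating it as immediate from Theorem~\ref{main1}. The only remark is that for the equivalence with ``$P(\mathring{D})=\mathring{W}$ is one-to-one'' you do not really need the branched-covering language: by the argument principle $I$ counts, with multiplicity, the preimages of $P(\lambda_0)$ in $\mathring{D}$, so $I=1$ forces a unique (simple) preimage for every value in $\mathring{W}$, hence injectivity; conversely injectivity trivially gives $I=1$.
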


\begin{Corollary}\label{n petals}
	$\sigma(J)$ consists of $N$ petals $\Leftrightarrow \cal{C}_P(\mathcal{E})=N \Leftrightarrow m_P(\C, \C \backslash \mathring{W})=N-1 \Leftrightarrow m_P(\C, \mathring{W})=0 \Leftrightarrow P(\cal{S})\cap  \mathring{W} =\emptyset$.
\end{Corollary}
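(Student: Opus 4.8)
The plan is to obtain the whole chain of equivalences directly from the petal decomposition formula of Theorem~\ref{main1}(2) together with the elementary additivity rules for $m_P$ recorded in the footnote following its definition; no new geometric input is needed beyond what Theorem~\ref{main1} already provides.

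First I would dispose of the left-most equivalence. By Theorem~\ref{main1}(3) there is a band decomposition $\sigma(J)=\bigcup_{n=1}^N\gamma_n$ into exactly $N$ bands, and every petal is a concatenation of bands end to end, hence contains at least one band; consequently the number of petals is at most $N$, and the extremal case ``$\sigma(J)$ consists of $N$ petals'' is exactly the statement $\mathcal{C}_P(\mathcal{E})=N$ (in which case each petal is a single band, so $P$ is one-to-one from each petal onto $\mathcal{E}$).

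Next, Theorem~\ref{main1}(2) asserts $\mathcal{C}_P(\mathcal{E})=1+m_P(\C,\C\backslash\mathring{W})$, so $\mathcal{C}_P(\mathcal{E})=N$ is equivalent to $m_P(\C,\C\backslash\mathring{W})=N-1$. Applying the total-count identity $m_P(\C,F)+m_P(\C,\C\backslash F)=m_P(\C,\C)=N-1$ from the footnote with $F=\mathring{W}$ converts this into $m_P(\C,\mathring{W})=0$. Finally $m_P(\C,\mathring{W})=\sum_{\lambda\in P^{-1}(\mathring{W})}\tau(\lambda)=m_P(\cal{S},\mathring{W})$ counts, with multiplicity, the stationary points whose $P$-image lies in $\mathring{W}$, so it vanishes precisely when no stationary point is mapped into $\mathring{W}$, that is, when $P(\cal{S})\cap\mathring{W}=\emptyset$. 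Concatenating the four equivalences proves the corollary.

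I do not expect a genuine obstacle: each arrow is either a direct citation of Theorem~\ref{main1} or a line of bookkeeping with $m_P$. The only place warranting an explicit sentence is the first equivalence, where one must observe that the $N$-band decomposition caps the number of petals at $N$, so that ``$N$ petals'' is the extremal configuration; this is immediate from parts~(2) and~(3) and requires no return to their proofs.
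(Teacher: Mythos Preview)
Your proposal is correct and matches the paper's own treatment: the paper states this corollary without a separate proof, presenting it as an immediate consequence of Theorem~\ref{main1} and the additivity identities for $m_P$ in the footnote, exactly as you do. Your added remark that the $N$-band decomposition bounds the number of petals by $N$ is not strictly needed (the first equivalence is just the definition of $\cal{C}_P(\mathcal{E})$), but it does no harm.
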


\begin{Corollary}\label{n disjoint petals}
	$\sigma(J)$ consists of $N$ disjoint petals $\Leftrightarrow  \cal{B}_P(\mathcal{E})=N \Leftrightarrow m_P(\C,  \C \backslash W)=N-1 \Leftrightarrow m_P(\C, W)=0 \Leftrightarrow P(\cal{S})\cap  W=\emptyset$.
\end{Corollary}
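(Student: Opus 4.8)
The plan is to read the whole chain of equivalences off from Theorem~\ref{main1} together with the elementary bookkeeping rules for $m_P$ recorded in the footnotes, peeling the four equivalences off one at a time. Three of them are purely formal; only the first, which ties the geometric picture to the bouquet count, carries any content. The argument is parallel to that of Corollary~\ref{n petals}, with the closed region $W$ and the bouquets taking over the roles of the open region $\mathring{W}$ and the petals.

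First I would dispose of the three algebraic equivalences, reading from right to left. The equivalence $m_P(\C,W)=0\Leftrightarrow P(\cal{S})\cap W=\emptyset$ is a matter of unwinding the definition: $m_P(\C,W)=\sum_{\lambda\in\cal{S}\cap P^{-1}(W)}\tau(\lambda)$ with every summand $\geq 1$, so the sum vanishes precisely when no stationary point is carried into $W$. Next, $m_P(\C,W)=0\Leftrightarrow m_P(\C,\C\backslash W)=N-1$ follows at once from the identity $m_P(\C,W)+m_P(\C,\C\backslash W)=m_P(\C,\C)=N-1$. Finally, $m_P(\C,\C\backslash W)=N-1\Leftrightarrow\cal{B}_P(\mathcal{E})=N$ follows at once from the bouquet count $\cal{B}_P(\mathcal{E})=1+m_P(\C,\C\backslash W)$ supplied by Theorem~\ref{main1}(5).

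The substantive step is the equivalence ``$\sigma(J)$ is a disjoint union of $N$ petals $\Leftrightarrow\cal{B}_P(\mathcal{E})=N$''. One direction is immediate: if $\sigma(J)$ is a disjoint union of $N$ petals, each such petal is a connected component of $\sigma(J)$, hence a maximal connected subset, i.e.\ a bouquet, so $\cal{B}_P(\mathcal{E})=N$. For the converse I would use the band decomposition $\sigma(J)=\bigcup_{n=1}^{N}\gamma_n$ from Theorem~\ref{main1}(3): each band is connected, hence contained in a single bouquet; since the bouquets are exactly the connected components of $\sigma(J)$ they partition the $N$ bands, and having $N$ bouquets forces each bouquet to contain precisely one band. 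By Theorem~\ref{main1}(5) every bouquet is either a single petal or is composed of flowers; but a flower contains at least two petals, each petal is composed of at least one band, and distinct petals meet in at most one point by Corollary~\ref{bands intersect}, so a bouquet composed of flowers would contain at least two distinct bands, contradicting that each bouquet has only one. Hence every bouquet is a single petal (necessarily a single band), and $\sigma(J)$ is the disjoint union of these $N$ petals.

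The only delicate point is the converse just sketched, where one must check that every bouquet contains at least one band — this holds because a bouquet is a nonempty connected subset of $\sigma(J)=\bigcup_n\gamma_n$ and each $\gamma_n$ is connected, so any point of the bouquet pulls a whole band into it — and that a flower genuinely contributes two distinct bands, for which Corollary~\ref{bands intersect} is exactly what is needed. Beyond that, the proof is just formal manipulation of quantities already delivered by Theorem~\ref{main1}.
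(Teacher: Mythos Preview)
Your proof is correct and follows exactly the approach the paper intends: the corollary is stated without proof as a direct consequence of Theorem~\ref{main1}, and you have correctly filled in the details by combining the bouquet count from Theorem~\ref{main1}(5) with the arithmetic of $m_P$. Your handling of the first equivalence via the band count is sound; a marginally shorter route is to note that $\cal{B}_P(\mathcal{E})\leq\cal{C}_P(\mathcal{E})\leq N$ always holds (each bouquet contains at least one petal, each petal at least one band), so $\cal{B}_P(\mathcal{E})=N$ forces $\cal{C}_P(\mathcal{E})=N$ and hence each bouquet is a single petal---but your argument via the pigeonhole on bands is equivalent.
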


By Theorem \ref{main1}, we can also construct operators whose spectrum satisfies some global requirements. 
\begin{Example}\label{Example N=5}
	We give an illustrating  example with   one bouquet, $N$ petals, and a flower with $N-1$ petals in 
	$\sigma(J)$.
	
	We can construct it in three steps. First, take  $\cal{S}=\{\lambda_1,\lambda_2\}$ and  $P'(\lambda)=N(\lambda-\lambda_1)^{N-2}(\lambda-\lambda_2)$. Second, take $a_1,a_2,\cdots,a_N,c_1,c_2,\cdots,c_N$, get $\cal{E}$, such that $|a|\neq |c|$ and $P(\cal{S})\subset \cal{E}$. Again, take $b_1,b_2,\cdots,b_N$, such that the discriminant is $P$.
	
	For example, let $N=5,\cal{S}=\{0,1\}$ and $P'(\lambda)=5\lambda^{3}(\lambda-1)$.
	Let $a_1=a_2=a_3=a_4=1,a_5=\frac{1}{8},c_1=c_2=\cdots=c_5=0$, then $a=\frac{1}{8},c=0$, $\mathcal{E} =\{\frac{1}{8}e^{-i\theta }\mid \theta \in(-\pi,\pi] \}$ is a circle. To satisfy $P(\cal{S})\subset \cal{E}$,  we need $P(\lambda)=\lambda^5-\frac{5}{4}\lambda^4+\frac{1}{8}$. By direct calculation, we have
	$P(\lambda)=(\lambda-b_1)(\lambda-b_2)(\lambda-b_3)(\lambda-b_4)(\lambda-b_5)$.  Let $b_1,b_2,\cdots,b_5$ be the zeros of $\lambda^5-\frac{5}{4}\lambda^4+\frac{1}{8}$, then we get a $5$-periodic Jacobi operator $J$, which meets the previous requirements. The corresponding spectrum is shown in Figure \ref{Picture of Example 1.1}(b).

	In the previous construction, if we fix $b_1,b_2,\cdots,b_5$ as the zeros of $\lambda^5-\frac{5}{4}\lambda^4+\frac{1}{8},c_1=\cdots=c_5=0$, and
arbitrarily take $a_1,a_2,\cdots,a_5$,  let $a= \Pi_{i=1}^5a_i$ be the parameter. Then  $\mathcal{E} =\{ae^{-i\theta }\mid \theta \in(-\pi,\pi] \}$ and $P(\lambda)=\lambda^5-\frac{5}{4}\lambda^4 + \frac{1}{8}$.

If $|a|>\frac{1}{8},P(\cal{S})\subset \mathring{W}$, by Corollary \ref{one petal}, $\sigma(J)$ has only one petal which is formed by  $5$ bands (Figure \ref{Picture of Example 1.1}(c)). 

If $|a|=\frac{1}{8},P(\cal{S})\subset \cal{E}=\partial W$ and $\tau(0)=3$,  by Corollary \ref{one bouquet},Corollary \ref{n petals} and Theorem \ref{main1}(4), these operators all meet the previous requirements, and they have the same spectrum (Figure \ref{Picture of Example 1.1}(b)).  

If $0<|a|<\frac{1}{8}$,\footnote{If $|a|=0,\sigma(J)=P^{-1}(0)=\{b_1,b_2,\cdots,b_5\}$. } $P(\cal{S})\cap  W=\emptyset$, by Corollary \ref{n disjoint petals}, $\sigma(J)$ consists of $5$ disjoint petals (Figure \ref{Picture of Example 1.1}(a)).

	\begin{figure}
		\centering
		\subfigure[$a=0.12$]{
			\begin{minipage}[b]{0.3\textwidth}
				\includegraphics[width=1\textwidth]{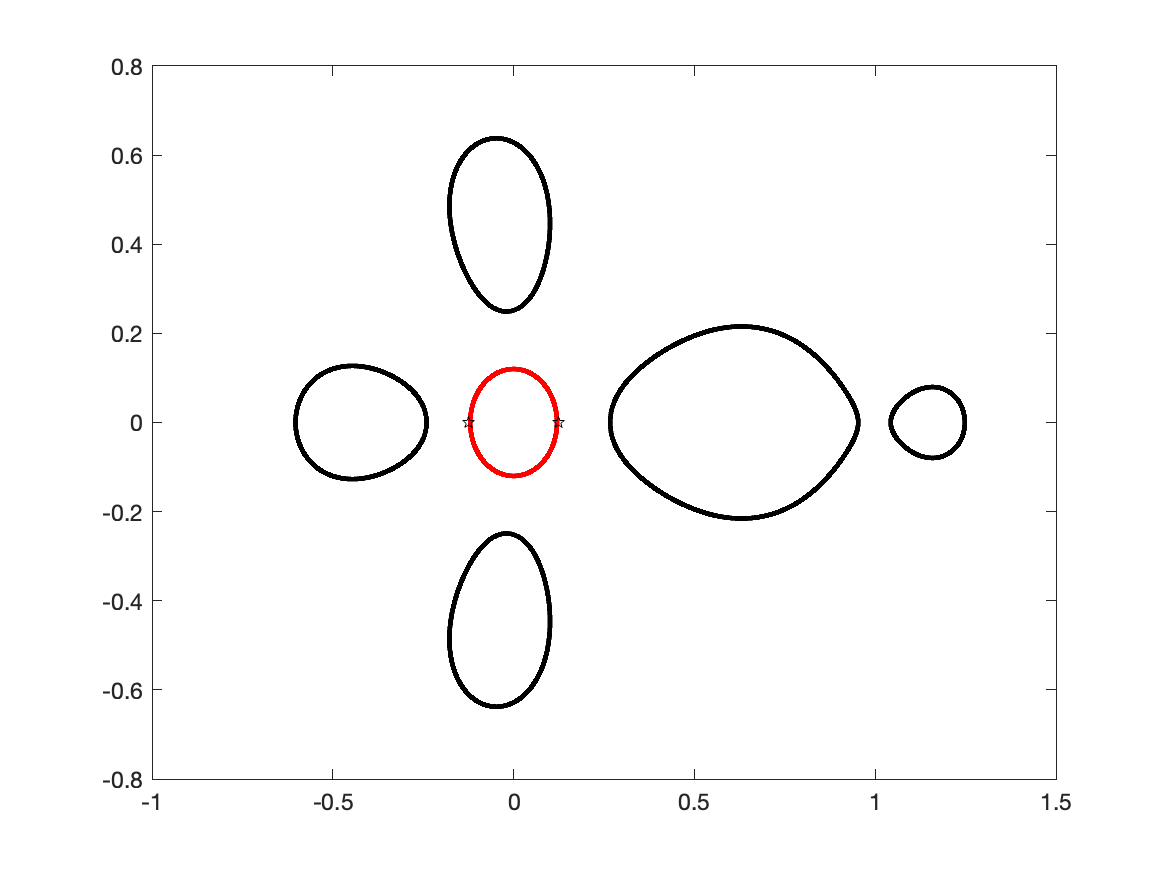}
			\end{minipage}
		}
		\subfigure[$a=0.125$]{
			\begin{minipage}[b]{0.3\textwidth}
				\includegraphics[width=1\textwidth]{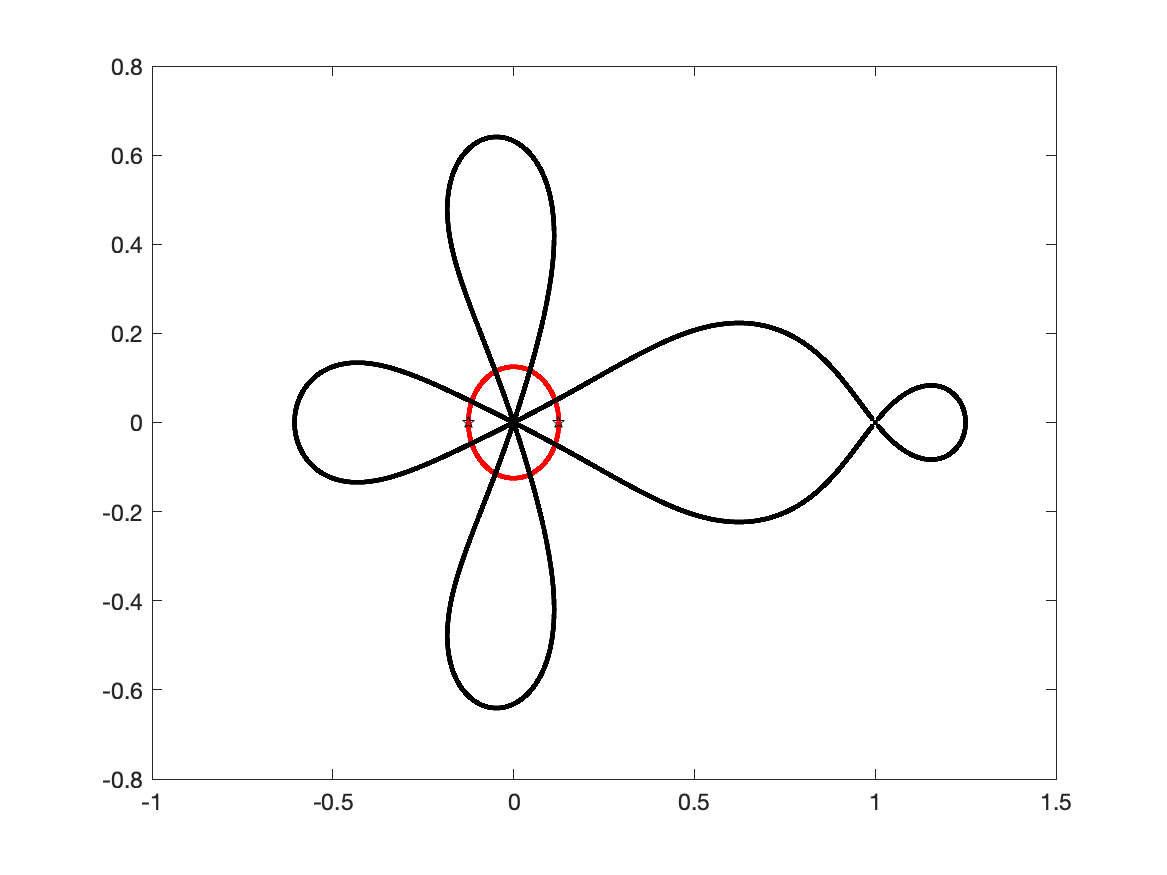}
			\end{minipage}
		}
		\subfigure[$a=0.13$]{
			\begin{minipage}[b]{0.3\textwidth}
				\includegraphics[width=1\textwidth]{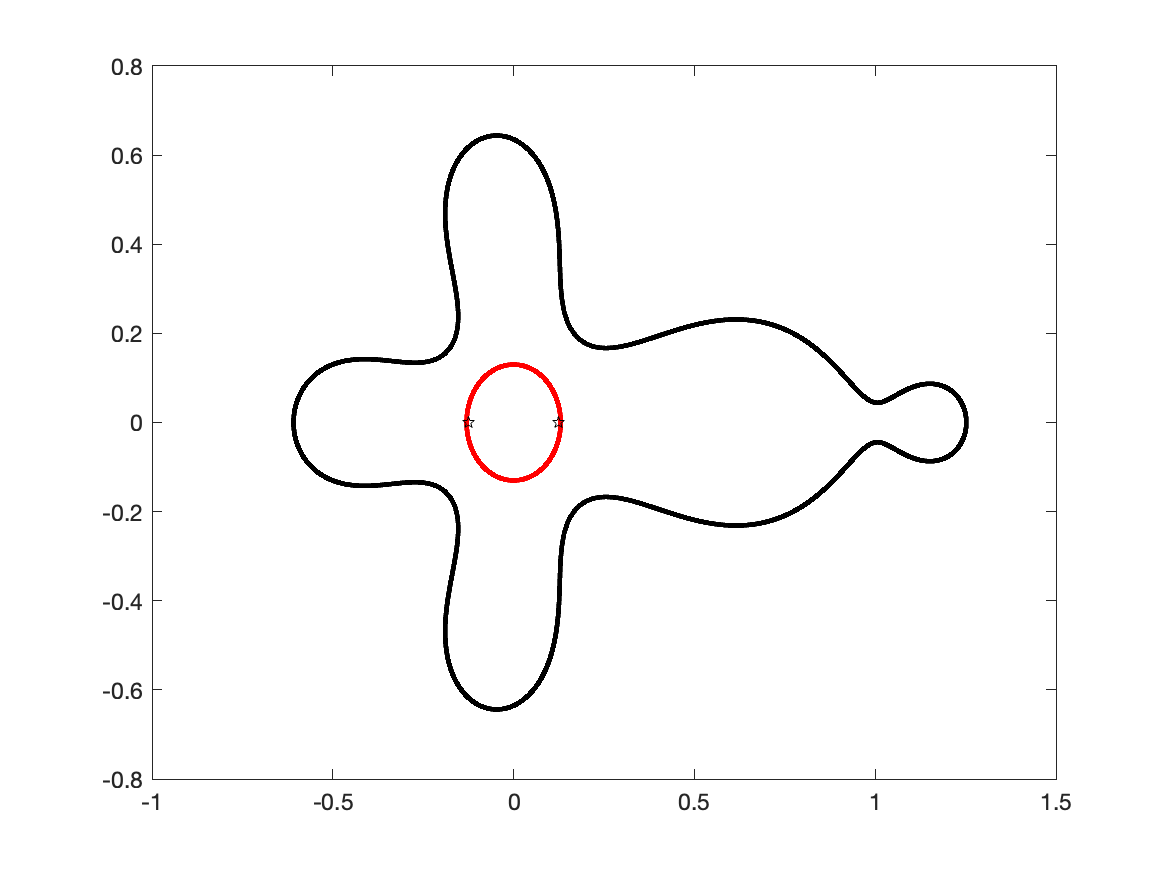}
			\end{minipage}
		}
		\caption{Example 1.1. The black curves is $\sigma(J)$, the red circle is $\cal{E}$, and the two five-pointed stars are $P(0),P(1)$.}
		\label{Picture of Example 1.1}
	\end{figure}

\end{Example}

Now we consider the case	$|a|= |c|\neq 0$,  which includes the Schr\"odinger operators. In this case,  $\cal{E}=\{e^{i\varphi}\cdot t|t\in [-2R,2R] \}=e^{i\varphi}  \cdot  [-2R,2R]$  is a line segment passing through the origin, where $R=|a|$, $\varphi=\frac{\varphi_a+\varphi_c}{2}$.  

\begin{Theorem}\label{main2}
	Suppose that  $|a|= |c|\neq 0$, then
	
	{\rm(1)} $\sigma(J)=P^{-1}(\mathcal{E}) $  is a bounded closed set, and $\C \backslash P^{-1}(\mathcal{E})$ is connected.

	{\rm(2)} {\rm Band decomposition:}  
	There is a band decomposition $\sigma(J)=\bigcup^N_{n=1}{\gamma_n }$, which is unique if and only if $P(\cal{S})\cap \mathcal{E}\backslash  \{\pm2Re^{i\varphi}\} =\emptyset$.\footnote{In particular, if $J$ is an Hermitian operator, then the band decomposition is unique. } Each band is  piecewise analytic and the no smooth points are in $\cal{S}$.

	{\rm(3)} $ \lambda$ is a intersection of  bands if and only if $\lambda\in \cal{S}\cap  \sigma(J)$. 
	Moreover, if $\lambda$ is   an endpoint of a band, then the small  neighborhood of $\lambda$ is divided into $\tau(\lambda)+1$  regions by these $\tau(\lambda)+1$ bands, each region has an angle $\frac{2\pi}{\tau(\lambda)+1}$. If $\lambda\in \sigma(J)$ is not an endpoint of any band, then the small  neighborhood of $\lambda$ is divided into $2\tau(\lambda)+2$  regions by these $\tau(\lambda)+1$ bands, each region has an angle $\frac{\pi}{\tau(\lambda)+1}$.

	{\rm(4)} {\rm Bouquet decomposition:} $\sigma(J)$ has a unique bouquet decomposition with 
	$$\cal{B}_P(\mathcal{E})=1+ m_P(\C,\C\backslash \mathcal{E})$$
	and  the number of bands contained in bouquet  $\Gamma$ is 
	$1+m(\Gamma,\C).$ 
	\end{Theorem}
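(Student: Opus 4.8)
The plan is to treat Theorem~\ref{main2} as a statement purely about the preimage $P^{-1}(\mathcal{E})$ of a straight segment under a monic polynomial of degree $N$; the only inputs from the operator are $\sigma(J)=P^{-1}(\mathcal{E})$ and $\deg P=N$. After rotating the $\lambda$-plane (replacing $P$ by $e^{-i\varphi}P$) I would assume $\mathcal{E}=[-2R,2R]\subset\R$, with endpoints $p_\pm=\pm 2R$ and open part $\mathcal{E}^\circ=(-2R,2R)$. The structural feature that makes this case genuinely different from Theorem~\ref{main1} is that $\mathcal{E}$ is contractible with connected complement, which will force $P^{-1}(\mathcal{E})$ to contain no Jordan curve: there are no petals, and each bouquet is a ``tree of bands''. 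For part~(1), $P$ is proper, so $P^{-1}(\mathcal{E})$ is compact, and it has empty interior because $P$ is an open map while $\mathcal{E}$ has none. To see that $\C\setminus P^{-1}(\mathcal{E})=P^{-1}(\C\setminus\mathcal{E})$ is connected, suppose $U$ is a bounded component; then $P(\partial U)\subset\mathcal{E}$, while $P(U)$ is open and, by a compactness argument (a sequence in $U$ accumulating on $\partial U$ is sent into $\mathcal{E}$), also closed in the connected set $\C\setminus\mathcal{E}$, so $P(U)=\C\setminus\mathcal{E}$, contradicting boundedness of $P(\overline{U})$. Since $P^{-1}(\mathcal{E})$ has empty interior and connected complement, it can contain no simple closed curve (otherwise the bounded component cut off by that curve would be an open set inside $P^{-1}(\mathcal{E})$).

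The heart of parts~(2) and~(3) is a local normal form. Near $\lambda_0\in P^{-1}(\mathcal{E})$ with $k:=\tau(\lambda_0)$, a holomorphic change of coordinate puts the function $P(\lambda)-P(\lambda_0)$ in the form $z\mapsto z^{k+1}$, so locally $P^{-1}(\mathcal{E})$ is the $z^{k+1}$-preimage of a straight segment through $0$ (when $P(\lambda_0)\in\mathcal{E}^\circ$) or of a half-segment ending at $0$ (when $P(\lambda_0)=p_\pm$). The first case yields $2(k+1)$ equally spaced rays, i.e.\ $k+1$ band-arcs crossing at $\lambda_0$ and cutting a neighbourhood into $2k+2$ sectors of angle $\pi/(k+1)$; the second yields $k+1$ equally spaced rays, i.e.\ $k+1$ band-ends meeting at $\lambda_0$ and cutting a neighbourhood into $k+1$ sectors of angle $2\pi/(k+1)$. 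Since a point of $\sigma(J)$ is an endpoint of a band precisely when it lies over $p_\pm$, this is exactly part~(3), and it shows that two or more bands meet at $\lambda$ if and only if $\lambda\in\mathcal{S}\cap\sigma(J)$.

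For the band decomposition in~(2), I would pick a regular value $w_0\in\mathcal{E}^\circ$ whose sub-segment to $p_-$ contains no critical value and analytically continue each of the $N$ points of $P^{-1}(w_0)$ along $\mathcal{E}$ towards $p_-$ and towards $p_+$; the continuation is forced except on reaching a critical point over $\mathcal{E}^\circ$, where one chooses a bijection between the $k+1$ incoming and $k+1$ outgoing local rays. With finitely many such choices each continuation terminates over $p_\pm$, producing $N$ distinct bands ($P$ is one-to-one on each, so they are distinguished by their point over $w_0$) which cover $P^{-1}(\mathcal{E})$ by path-lifting and overlap only on $\mathcal{S}\cap\sigma(J)$. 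If $P(\mathcal{S})\cap\mathcal{E}^\circ=\emptyset$ no choices occur: over the simply connected arc $\mathcal{E}^\circ$ the map $P$ is an honest $N$-sheeted covering, each sheet closes up to a unique band (not to a Jordan curve, by~(1)), and any band restricted over $\mathcal{E}^\circ$ is one of these sheets, so the decomposition is forced. Conversely, if some $\lambda_0\in\mathcal{S}$ has $P(\lambda_0)\in\mathcal{E}^\circ$, then transposing the matching of two incoming/outgoing rays belonging to two distinct bands gives re-paired curves that are still bands — the $P$-images of the two halves are disjoint sub-segments, so $P$ stays injective — hence a different decomposition; this yields the stated ``if and only if''. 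The Hermitian footnote then follows because the discriminant of a self-adjoint periodic Jacobi operator is a real polynomial, all of whose critical values lie outside $\mathcal{E}^\circ$.

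For part~(4), bouquets are the connected components of $P^{-1}(\mathcal{E})$, so that decomposition is automatically unique; since $P^{-1}(\mathcal{E})$ contains no Jordan curve, the graph with one vertex per band and, for each $\lambda_0\in\mathcal{S}\cap\sigma(J)$, a hyperedge joining the $\tau(\lambda_0)+1$ bands meeting at $\lambda_0$, is a forest. Hence inside a bouquet $\Gamma$ the number of bands is $1+\sum_{\lambda_0\in\mathcal{S}\cap\Gamma}\tau(\lambda_0)=1+m_P(\Gamma,\C)$, and summing over components, using $\sum_{\lambda\in\mathcal{S}}\tau(\lambda)=N-1$, gives $\mathcal{B}_P(\mathcal{E})=N-m_P(\C,\mathcal{E})=1+m_P(\C,\C\setminus\mathcal{E})$. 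The hard part, I expect, will be the uniqueness half of~(2): making the re-pairing argument rigorous \emph{globally} — verifying the modified curves remain bands even when a single band passes through several critical points or when a critical value coincides with an endpoint $p_\pm$ — together with the bookkeeping of which local rays count as incoming versus outgoing as $w$ sweeps $\mathcal{E}$. A lesser technical point is checking that the finitely many local band-arcs from the normal form patch into exactly $N$ global bands with no spurious closed components, but part~(1) carries most of that weight.
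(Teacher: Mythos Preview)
Your proposal is correct, and for parts (1)--(3) it runs parallel to the paper: the paper's Proposition~2.1 is exactly your local normal form (phrased via $\arg(P(\lambda)-P(\lambda_0))$ rather than the explicit chart $z\mapsto z^{k+1}$, but with identical output), and existence/non-uniqueness of the band decomposition is handled by Corollary~2.2 and Remark~2.4, which are your continuation-from-a-regular-value and re-pairing arguments in slightly different dress. Your worry about the uniqueness direction of (2) is not serious: once you know distinct bands meet only at critical points, transposing the pairing at a single $\lambda_0$ over $\mathcal{E}^\circ$ replaces two bands $\gamma,\gamma'$ through $\lambda_0$ by two new arcs whose halves on either side of $P(\lambda_0)$ were already injective over disjoint sub-segments of $\mathcal{E}$, so $P$ stays injective on each new arc automatically; the paper treats this just as lightly.

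The genuine divergence is in part (4). The paper does \emph{not} use your tree combinatorics; it instead regards $\mathcal{E}$ as a degenerate ellipse, fattens it to a thin ellipse $\partial W$ with $W\cap P(\mathcal{S})=\mathcal{E}\cap P(\mathcal{S})$, quotes Theorem~\ref{main1}(5) to get $\mathcal{B}_P(\partial W)=1+m_P(\C,\C\setminus W)=1+m_P(\C,\C\setminus\mathcal{E})$, and lets $W\to\mathcal{E}$. Your route is more self-contained: from (1) each bouquet $\Gamma$ is a topological tree, so the bipartite incidence hypergraph (bands versus critical points on $\Gamma$) is Berge-acyclic, and the hypertree identity $|V|-1=\sum_e(|e|-1)$ gives the band count $1+m_P(\Gamma,\C)$ directly, with the global bouquet count following by summation. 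This bypasses the surgery/induction machinery (Lemmas~2.3--2.5, Proposition~2.2) that underlies Theorem~\ref{main1}. The paper's approach, in exchange, makes the segment case a one-line corollary of the ellipse case, at the price of a limiting step (why bouquets of $P^{-1}(\partial W)$ biject with those of $P^{-1}(\mathcal{E})$ as $W\to\mathcal{E}$) that is left implicit.
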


\begin{Example}\label{example local doco}
	Let $N=2,a_1=a_2=c_1=c_2=1,b_1=i,b_2=-i$, then $\cal{E}=[-2,2]$ and $P=\lambda^2-1$. So 
	$\sigma(J)=P^{-1}([-2,2])=[-\sqrt{3},\sqrt{3}]\cup i\cdot [-1,1]$.  There are two different way of band decomposition, one is $\gamma_1=[-\sqrt{3},0]\cup i\cdot [-1,0],\gamma_2=[0,\sqrt{3}]\cup i\cdot [0,1]$, the other is $\gamma_1=[-\sqrt{3},0]\cup i\cdot [0,1],\gamma_2=[0,\sqrt{3}]\cup i\cdot [-1,0]$.
\end{Example}

We have the following corollaries for $|a|= |c|\neq 0$.
\begin{Corollary}\label{invariant2}
	$\sigma(J)=P_k^{-1}(\mathcal{E}_k) $,\footnote{Here $P_k$ and $\mathcal{E}_k$ are the same as in Corollary \ref{invariant1}.} for all $k\in\Z^+$. In particular, $m_{P_k}(\C,\C \backslash \mathcal{E} _k)$ is independent of $k$. 
\end{Corollary}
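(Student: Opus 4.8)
The plan is to reduce both assertions to the fact that $\sigma(J)$ is an invariant of the operator $J$ acting on $l^2(\Z)$, entirely insensitive to which period is used to describe $J$; the symbols $P_k$ and $\mathcal{E}_k$ are merely different bookkeeping devices for one and the same set. Each assertion is then obtained by feeding the period $kN$ into facts already available in the excerpt.

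First I would prove $\sigma(J)=P_k^{-1}(\mathcal{E}_k)$. An $N$-periodic $J$ is in particular $kN$-periodic, and the coefficient products over one period of length $kN$ telescope: $\prod_{i=1}^{kN}a_i=a^k$ and $\prod_{i=1}^{kN}c_i=c^k$. Running the argument recalled in the introduction at period $kN$ in place of $N$ — the Kumar--Kulkarni/Floquet identity $\sigma(J)=\bigcup_\theta\sigma(J_k(\theta))$ together with the direct computation $\det(\lambda\,\mathrm{Id}-J_k(\theta))=P_k(\lambda)-a^ke^{-i\theta}-c^ke^{i\theta}$ — gives $\sigma(J)=P_k^{-1}(\mathcal{E}_k)$. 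The left-hand side is literally the same set for every $k$, since it was defined through the resolvent of $J$ with no reference to a period. A self-contained alternative records the standard $2\times2$ monodromy matrix $M(\lambda)$ over one minimal period, with $\det M(\lambda)=c/a$ and $P(\lambda)=a\operatorname{tr}M(\lambda)$ (both sides are degree-$N$ polynomials agreeing on the infinite set $\sigma(J)$); since the monodromy over period $kN$ is $M(\lambda)^k$, one has $P_k(\lambda)=a^k\operatorname{tr}M(\lambda)^k$, and a one-line computation with the eigenvalues $\mu_1,\mu_2$ of $M(\lambda)$ (using $\mu_1\mu_2=c/a$) shows that $\lambda\in P_k^{-1}(\mathcal{E}_k)$ if and only if $M(\lambda)$ has an eigenvalue of modulus $1$, a condition independent of $k$. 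Either way $P_1^{-1}(\mathcal{E}_1)=\cdots=P_k^{-1}(\mathcal{E}_k)=\sigma(J)$.

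Next I would deduce the $k$-independence of $m_{P_k}(\C,\C\backslash\mathcal{E}_k)$. Since $|a^k|=|c^k|\neq0$, Theorem \ref{main2} applies to $J$ regarded as a $kN$-periodic operator with discriminant $P_k$ and segment $\mathcal{E}_k$, and its part (4) gives $\cal{B}_{P_k}(\mathcal{E}_k)=1+m_{P_k}(\C,\C\backslash\mathcal{E}_k)$, i.e. $m_{P_k}(\C,\C\backslash\mathcal{E}_k)=\cal{B}_{P_k}(\mathcal{E}_k)-1$. But $\cal{B}_{P_k}(\mathcal{E}_k)$ is by definition the number of maximal connected subsets of $P_k^{-1}(\mathcal{E}_k)=\sigma(J)$, i.e. the number of connected components of $\sigma(J)$, which depends on $J$ alone. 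Hence $m_{P_k}(\C,\C\backslash\mathcal{E}_k)$ equals the number of connected components of $\sigma(J)$ minus $1$ for every $k$, and in particular is independent of $k$.

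The argument is essentially bookkeeping once Theorem \ref{main2} is granted, so I do not expect a serious obstacle. The only point demanding a little care is setting up the period-$kN$ block form correctly — in particular checking that the two corner entries producing the $e^{\pm i\theta}$ terms in $\det(\lambda\,\mathrm{Id}-J_k(\theta))$ yield exactly $-a^ke^{-i\theta}$ and $-c^ke^{i\theta}$ — but this is word-for-word the period-$N$ computation already used in the introduction. I would include the monodromy-matrix remark mostly because it makes the first assertion transparent without re-running the Floquet analysis at length $kN$.
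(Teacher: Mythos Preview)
Your proposal is correct and follows essentially the same reasoning the paper intends (the corollary is stated without proof, with the idea indicated only by the parallel footnote to Corollary~\ref{invariant1}): apply the introduction's spectral formula at period $kN$ to get $\sigma(J)=P_k^{-1}(\mathcal{E}_k)$, then read off the $k$-independence of $m_{P_k}(\C,\C\backslash\mathcal{E}_k)$ from Theorem~\ref{main2}(4) via the invariance of the bouquet count $\cal{B}_{P_k}(\mathcal{E}_k)$. The monodromy-matrix alternative you sketch is a nice self-contained addition not present in the paper; it works cleanly here because $|a|=|c|$ forces $|\mu_1\mu_2|=1$, so ``$M(\lambda)^k$ has a unimodular eigenvalue'' is equivalent to ``$M(\lambda)$ has one'', but note that the justification ``both sides are degree-$N$ polynomials agreeing on $\sigma(J)$'' for $P(\lambda)=a\,\operatorname{tr}M(\lambda)$ is roundabout---this identity is a direct expansion of $\det(\lambda\,\mathrm{Id}-J(\theta))$ and would be better cited as such.
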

\begin{Corollary}\label{bands intersect+}
	There is no closed curve in $\sigma(J)$ and any two bands has at most one intersection.
\end{Corollary}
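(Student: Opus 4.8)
The plan is to derive both statements from part~(1) of Theorem~\ref{main2} — that $\sigma(J)=P^{-1}(\mathcal{E})$ is bounded and that $\mathbb{C}\backslash P^{-1}(\mathcal{E})$ is connected — together with the fact that the discriminant $P$ is a non-constant polynomial (so the open mapping theorem applies on any open set) and the Jordan curve theorem. The decisive new feature of the regime $|a|=|c|\ne 0$ is that $\mathcal{E}=e^{i\varphi}[-2R,2R]$ is a genuine line segment, hence a subset of $\mathbb{C}$ with empty interior; this is exactly what fails when $\mathcal{E}$ is an ellipse, which is why petals (closed curves) are allowed there but forbidden here.

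\textit{Step 1: $\sigma(J)$ contains no simple closed curve.} Assume $\gamma\subseteq\sigma(J)$ is a simple closed curve and let $\Omega$ be the bounded component of $\mathbb{C}\backslash\gamma$. I first claim $\overline{\Omega}\subseteq P^{-1}(\mathcal{E})$. If not, pick $p\in\Omega$ with $p\notin P^{-1}(\mathcal{E})$. Since $\sigma(J)=P^{-1}(\mathcal{E})$ is bounded, $\mathbb{C}\backslash P^{-1}(\mathcal{E})$ is open and (by Theorem~\ref{main2}(1)) connected, hence path-connected, and contains a neighbourhood of $\infty$; so there is an arc from $p$ to a point lying outside every disc that contains $\gamma$, staying inside $\mathbb{C}\backslash P^{-1}(\mathcal{E})\subseteq\mathbb{C}\backslash\gamma$. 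This arc joins the bounded and the unbounded component of $\mathbb{C}\backslash\gamma$ without crossing $\gamma$, contradicting the Jordan curve theorem. Hence $\overline{\Omega}\subseteq P^{-1}(\mathcal{E})$, so $P(\Omega)\subseteq P(\overline{\Omega})\subseteq\mathcal{E}$. But $\Omega$ is a nonempty open set and $P$ is non-constant holomorphic, so $P(\Omega)$ is open; this is impossible since $\mathcal{E}$ has empty interior.

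\textit{Step 2: two distinct bands meet in at most one point.} Let $\gamma_m,\gamma_n$ with $m\ne n$ be two bands of a band decomposition $\sigma(J)=\bigcup_{k=1}^{N}\gamma_k$ as in Theorem~\ref{main2}(2). Since $\mathcal{E}$ is not a closed curve, $P$ maps each band homeomorphically onto $\mathcal{E}$ (see the Remark following the definition of a band); let $g_k=(P|_{\gamma_k})^{-1}$. If $w\in\mathcal{E}$ is not a critical value of $P$, then $P^{-1}(w)$ consists of exactly $N$ points; since $\bigcup_k\gamma_k=\sigma(J)\supseteq P^{-1}(w)$ and $\gamma_k$ meets $P^{-1}(w)$ only in $g_k(w)$, the points $g_1(w),\dots,g_N(w)$ are precisely this fibre and hence pairwise distinct. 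Therefore $\gamma_m\cap\gamma_n\subseteq P^{-1}(C)$ with $C:=P(\mathcal{S})\cap\mathcal{E}$ finite, so $\gamma_m\cap\gamma_n$ is finite, and by injectivity of $P$ on $\gamma_m$ its points have distinct images. Now suppose $z_1\ne z_2$ lie in $\gamma_m\cap\gamma_n$; put $w_i=P(z_i)$, so $w_1\ne w_2$. Let $\alpha_m\subseteq\gamma_m$, $\alpha_n\subseteq\gamma_n$ be the sub-arcs from $z_1$ to $z_2$; each maps homeomorphically under $P$ onto the sub-segment $[w_1,w_2]$. Picking $w^{*}\in(w_1,w_2)$ that is not a critical value, we get $g_m(w^{*})\ne g_n(w^{*})$, and these lie in $\alpha_m$ and $\alpha_n$ respectively, so $\alpha_m\ne\alpha_n$. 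As two distinct arcs sharing both endpoints always contain a simple closed curve, $\alpha_m\cup\alpha_n\subseteq\sigma(J)$ would contain one, contradicting Step~1. Hence $\gamma_m$ and $\gamma_n$ meet in at most one point; that one point can indeed occur is shown by Example~\ref{example local doco}.

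\textit{Where the difficulty lies.} The substantive step is Step~1; everything in Step~2 is then a combination of a covering-type count and the elementary lemma on arcs. Inside Step~1 the only delicate move is the passage from $\gamma\subseteq P^{-1}(\mathcal{E})$ to $\overline{\Omega}\subseteq P^{-1}(\mathcal{E})$, and it is here that the connectedness of $\mathbb{C}\backslash P^{-1}(\mathcal{E})$ from Theorem~\ref{main2}(1) is indispensable: neither connectedness of $P^{-1}(\mathcal{E})$ itself nor emptiness of its interior would suffice, since a circle is a simple closed curve with empty interior.
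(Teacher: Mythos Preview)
Your proof is correct and is precisely the argument the paper leaves implicit: Corollary~\ref{bands intersect+} is stated without proof, but the parallel Corollary~\ref{bands intersect} for $|a|\ne|c|$ carries the footnote ``Because $\C\backslash P^{-1}(W)$ is connected,'' and your Step~1 is exactly that idea, with the open mapping theorem added to exploit that here $\mathcal{E}$ has empty interior. Step~2 then reduces cleanly to Step~1, which is the natural route.
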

\begin{Corollary}
	$\sigma(J)$ consists of $N$ disjoint bands $\Leftrightarrow \cal{B}_P(\mathcal{E})=N \Leftrightarrow m_P(\C,  \C\backslash \mathcal{E})=N-1 \Leftrightarrow m_P(\C, \mathcal{E})=0 \Leftrightarrow P(\cal{S})\cap  \mathcal{E}=\emptyset$. \footnote{ For $|a|= |c|\neq 0$, generically $\sigma(J)$ consists of $N$ disjoint bands.}
\end{Corollary}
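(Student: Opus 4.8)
The plan is to prove the five conditions equivalent by first disposing of the three rightmost equivalences formally, using the additivity rules for $m_P$ recorded after its definition together with \thmref{main2}(4), and then linking the condition ``$\sigma(J)$ consists of $N$ disjoint bands'' to the chain via the band/bouquet description in \thmref{main2}. For the formal part: \thmref{main2}(4) gives $\cal{B}_P(\mathcal{E})=1+m_P(\C,\C\backslash\mathcal{E})$, so $\cal{B}_P(\mathcal{E})=N\Leftrightarrow m_P(\C,\C\backslash\mathcal{E})=N-1$; since $m_P(\C,\mathcal{E})+m_P(\C,\C\backslash\mathcal{E})=m_P(\C,\C)=N-1$, this is equivalent to $m_P(\C,\mathcal{E})=0$; and because $m_P(\C,\mathcal{E})=\sum_{\lambda\in P^{-1}(\mathcal{E})}\tau(\lambda)$ is a sum of nonnegative integers, it vanishes exactly when $\tau(\lambda)=0$ for every $\lambda\in P^{-1}(\mathcal{E})=\sigma(J)$, i.e. when $\cal{S}\cap\sigma(J)=\emptyset$, which is precisely $P(\cal{S})\cap\mathcal{E}=\emptyset$.

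Next I would connect the first condition. The shortest route is \thmref{main2}(3): two bands intersect only at points of $\cal{S}\cap\sigma(J)$, and every such point is a genuine intersection point of at least two bands. A band decomposition $\sigma(J)=\bigcup_{n=1}^N\gamma_n$ always exists by \thmref{main2}(2) and has exactly $N$ bands; these $N$ bands are pairwise disjoint if and only if $\cal{S}\cap\sigma(J)=\emptyset$, which by the previous paragraph closes the cycle. As a cross-check, one can argue through \thmref{main2}(4) instead: each bouquet contains $1+m_P(\Gamma,\C)\ge 1$ bands and the bands total $N$, so $\cal{B}_P(\mathcal{E})=N$ forces every bouquet to consist of a single band, whence the bands are the connected components of $\sigma(J)$ and in particular disjoint; conversely $N$ disjoint bands are $N$ connected components, so $\cal{B}_P(\mathcal{E})=N$.

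The hard part is making sure ``$\sigma(J)$ consists of $N$ disjoint bands'' is unambiguous and that nothing goes wrong at the two endpoints $\pm 2Re^{i\varphi}$ of the segment $\mathcal{E}$. Ambiguity is not an issue: once $P(\cal{S})\cap\mathcal{E}=\emptyset$ we have a fortiori $P(\cal{S})\cap(\mathcal{E}\backslash\{\pm 2Re^{i\varphi}\})=\emptyset$, so \thmref{main2}(2) makes the band decomposition unique. The real subtlety is that each band must terminate over each endpoint $\pm 2Re^{i\varphi}$ whether or not a stationary value lies there, and one must know that distinct bands do not accidentally share such a terminal point unless it belongs to $\cal{S}$; this is exactly the content of the local picture in \thmref{main2}(3) (an endpoint $\lambda$ of a band has a neighbourhood cut into $\tau(\lambda)+1$ sectors, so $\tau(\lambda)=0$ leaves exactly one band-end at $\lambda$). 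Granting \thmref{main2}, the rest is routine bookkeeping.
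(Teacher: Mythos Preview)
Your proof is correct and follows exactly the route the paper intends: the corollary is stated without proof as an immediate consequence of \thmref{main2}, and you have spelled out precisely the bookkeeping---using \thmref{main2}(4) for the bouquet count, the additivity identity $m_P(\C,\mathcal{E})+m_P(\C,\C\backslash\mathcal{E})=N-1$, and \thmref{main2}(3) to equate ``all bands disjoint'' with $\cal{S}\cap\sigma(J)=\emptyset$. Your care about the endpoints $\pm 2Re^{i\varphi}$ via the $\tau(\lambda)+1$ sector count in \thmref{main2}(3) is exactly what is needed to rule out accidental band-end collisions, and the cross-check through the bouquet band count $1+m_P(\Gamma,\C)$ is a nice redundancy.
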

\begin{Corollary}
	$\sigma(J)$ has only one bouquet $\Leftrightarrow \cal{B}_P(\mathcal{E})=1 \Leftrightarrow \\
	m_P(\C,\C\backslash \mathcal{E})=0 \Leftrightarrow m_P(\C, \mathcal{E})=N-1 \Leftrightarrow P(\cal{S})\subset \mathcal{E}$.
\end{Corollary}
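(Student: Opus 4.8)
The plan is to prove the five conditions mutually equivalent by checking each consecutive link in the chain of equivalences in the statement; all of the links reduce to Theorem~\ref{main2}(4) together with the elementary bookkeeping identities for $m_P$ recorded in the footnote to the definition of $m_P$, so no further geometric input is needed.

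The equivalence ``$\sigma(J)$ has only one bouquet'' $\Leftrightarrow \mathcal{B}_P(\mathcal{E})=1$ is merely the definition of a bouquet as a maximal connected subset of $P^{-1}(\mathcal{E})=\sigma(J)$, with $\mathcal{B}_P(\mathcal{E})$ the number of such subsets. For the next link I would invoke the bouquet count of Theorem~\ref{main2}(4), $\mathcal{B}_P(\mathcal{E})=1+m_P(\C,\C\backslash\mathcal{E})$; since $m_P(\C,\C\backslash\mathcal{E})$ is a nonnegative integer, $\mathcal{B}_P(\mathcal{E})=1$ holds if and only if $m_P(\C,\C\backslash\mathcal{E})=0$. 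The third link uses the additivity identity $m_P(\C,\mathcal{E})+m_P(\C,\C\backslash\mathcal{E})=m_P(\C,\C)=N-1$, the last equality holding because the zeros of $P'$ counted with multiplicity number $\deg P'=N-1$, so that $\sum_{\lambda\in\mathcal{S}}\tau(\lambda)=N-1$; hence $m_P(\C,\C\backslash\mathcal{E})=0$ is equivalent to $m_P(\C,\mathcal{E})=N-1$.

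For the last link I would expand the definition, $m_P(\C,\mathcal{E})=\sum_{\lambda\in\C\cap P^{-1}(\mathcal{E})}\tau(\lambda)=\sum_{\lambda\in\mathcal{S},\,P(\lambda)\in\mathcal{E}}\tau(\lambda)$, using that $\tau$ vanishes off $\mathcal{S}$. Since $\tau(\lambda)\ge 1$ for every $\lambda\in\mathcal{S}$ while $\sum_{\lambda\in\mathcal{S}}\tau(\lambda)=N-1$, the partial sum over $\{\lambda\in\mathcal{S}:P(\lambda)\in\mathcal{E}\}$ attains the value $N-1$ precisely when that index set is all of $\mathcal{S}$, i.e. when $P(\lambda)\in\mathcal{E}$ for every stationary point $\lambda$, which is exactly $P(\mathcal{S})\subset\mathcal{E}$. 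Every step above is a genuine equivalence, so the five statements are equivalent. There is no real obstacle once Theorem~\ref{main2}(4) is available; the only point needing a moment's care is that the counting function $m_P(\C,\cdot)$ saturates exactly at $N-1=\deg P'$, which is what makes the final link an ``if and only if'' rather than a one-directional implication.
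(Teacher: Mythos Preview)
Your proof is correct and is exactly the intended argument: the paper states this corollary without proof because it follows immediately from the bouquet count $\mathcal{B}_P(\mathcal{E})=1+m_P(\C,\C\backslash\mathcal{E})$ in Theorem~\ref{main2}(4) together with the additivity identity $m_P(\C,\mathcal{E})+m_P(\C,\C\backslash\mathcal{E})=N-1$ from the footnote, which is precisely the chain you wrote out.
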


\begin{Corollary}\label{N bands end to end}
	$ \sigma(J)$ is composed of $N$ bands end to end $\Leftrightarrow \#\cal{S}=N-1$ and $P(\cal{S})\subset \{\pm 2Re^{i\varphi} \} $.
\end{Corollary}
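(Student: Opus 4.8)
The plan is to reduce the statement to elementary graph theory. Introduce the \emph{band graph} $G$ of $\sigma(J)$: its vertices are the endpoints of the bands and its edges are the bands, two vertices being joined when they are the two endpoints of a common band. I read ``$\sigma(J)$ composed of $N$ bands end to end'' as: there is a band decomposition $\sigma(J)=\bigcup_{n=1}^{N}\gamma_n$ in which, after relabelling, $\gamma_n\cap\gamma_{n+1}$ is a single point that is an endpoint of both $\gamma_n$ and $\gamma_{n+1}$ for $1\le n\le N-1$, while $\gamma_m\cap\gamma_n=\emptyset$ for $|m-n|\ge 2$ --- equivalently, $G$ is a simple path with $N$ edges. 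Write $e_\pm=\pm 2Re^{i\varphi}$ for the endpoints of the segment $\mathcal E$. The facts I will repeatedly use are: each band maps homeomorphically onto $\mathcal E$ (the Remark after the band definition, since $\mathcal E$ is not closed), so its two endpoints lie in $P^{-1}(e_+)$ and $P^{-1}(e_-)$ respectively, and any $\lambda\in\sigma(J)$ with $P(\lambda)\notin\{e_+,e_-\}$ is interior to every band through it; band intersections are exactly $\mathcal S\cap\sigma(J)$, and $\tau(\lambda)+1$ bands pass through such a $\lambda$ (Theorem~\ref{main2}(3)); and $\sigma(J)$ is connected iff $m_P(\C,\C\setminus\mathcal E)=0$ (Theorem~\ref{main2}(4)), which when $P(\mathcal S)\subset\{e_+,e_-\}$ moreover makes the band decomposition unique with exactly $N$ bands (Theorem~\ref{main2}(2)).

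For the forward implication, assume $G$ is a path. Its $N-1$ internal vertices are intersection points of bands, hence lie in $\mathcal S\cap\sigma(J)$; since exactly two bands meet at each, Theorem~\ref{main2}(3) forces $\tau(\lambda)=1$ there. As $\sum_{\lambda\in\mathcal S}\tau(\lambda)=N-1$, these $N-1$ points exhaust $\mathcal S$, so $\#\mathcal S=N-1$; and each is an endpoint of a band, hence is mapped by $P$ into $\{e_+,e_-\}$, so $P(\mathcal S)\subset\{\pm 2Re^{i\varphi}\}$. (The two leaves of $G$, not being intersection points, lie off $\mathcal S$, consistently.)

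For the converse, assume $\#\mathcal S=N-1$ (so every stationary point is simple) and $P(\mathcal S)\subset\{e_+,e_-\}$. Then $m_P(\C,\C\setminus\mathcal E)=0$, so $\sigma(J)$ --- hence $G$ --- is connected, with exactly $N$ bands. The vertex set of $G$ is precisely $P^{-1}(e_+)\cup P^{-1}(e_-)$: every band endpoint lies there, and conversely each $\lambda\in P^{-1}(e_\pm)\subset\sigma(J)$ lies in some band whose interior maps into $\mathcal E\setminus\{e_+,e_-\}$, hence is an endpoint of that band. By Theorem~\ref{main2}(3), a regular point of $P^{-1}(e_\pm)$ is not a band intersection, so exactly one band ends there (degree $1$ in $G$), while a (necessarily simple) stationary point in $P^{-1}(e_\pm)$ has $\tau+1=2$ bands ending there (degree $2$). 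Counting roots of $P-e_\pm$ with multiplicity, if $p_\pm,q_\pm$ denote the numbers of regular and critical points of $P^{-1}(e_\pm)$ then $p_\pm+2q_\pm=N$, while $q_++q_-=\#\mathcal S=N-1$; hence $p_++p_-=2$, and $G$ has $(p_++q_+)+(p_-+q_-)=N+1$ vertices and $N$ edges, with exactly two vertices of degree $1$ and $N-1$ of degree $2$. A connected graph with $V=E+1$ is a tree, and a tree with exactly two leaves is a path; thus $G$ is a path and $\sigma(J)$ is $N$ bands end to end.

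I expect the main friction to be pinning down the local incidence numbers at the fibres $P^{-1}(e_\pm)$ and making the interpretation of ``end to end'' precise enough to be usable. Concretely, one must be sure that no band closes into a loop (so that a band really has two distinct endpoints, one over $e_+$ and one over $e_-$) --- this follows from the homeomorphism onto $\mathcal E$, or alternatively from Corollary~\ref{bands intersect+} --- and that ``regular point $\Rightarrow$ one band, simple critical point $\Rightarrow$ two bands'' holds at the endpoints, which is exactly the content of Theorem~\ref{main2}(3) once one notes that every such fibre point is a band endpoint. After that the argument is pure counting, and the forward direction is essentially immediate.
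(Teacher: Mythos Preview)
Your proof is correct. The paper states this corollary without proof, leaving it as a direct consequence of Theorem~\ref{main2}, so there is no ``paper's own proof'' to compare against; your argument supplies exactly the details the paper suppresses, and it does so cleanly.

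A few remarks. Your graph-theoretic packaging is a genuinely nice way to organise the counting. The paper's implicit route would presumably combine the corollary ``$\sigma(J)$ has only one bouquet $\Leftrightarrow P(\mathcal S)\subset\mathcal E$'' with Theorem~\ref{main2}(3) (each simple stationary endpoint joins exactly two bands) and Corollary~\ref{bands intersect+} (no closed curve, at most one intersection between any two bands) to conclude that the single bouquet must be a chain; but making that last step rigorous amounts to your tree argument anyway. So your approach is not really different in spirit, just more explicit.

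Two small points worth noting to make the write-up airtight. First, in the converse you pass from ``$\sigma(J)$ connected'' to ``$G$ connected''; this is justified because any two bands that meet at all must, under the hypothesis $P(\mathcal S)\subset\{e_\pm\}$, meet at a vertex of $G$ (a band intersection lies in $\mathcal S\cap\sigma(J)$, hence in $P^{-1}(e_\pm)$) --- you should state this explicitly. Second, you need that $G$ has no loops or multi-edges before invoking ``connected with $V=E+1$ is a tree'': no loops because the two endpoints of a band map to $e_+$ and $e_-$ respectively (and $R\neq 0$), and no multi-edges by Corollary~\ref{bands intersect+}. You address the loop issue in your final paragraph; a sentence on multi-edges would complete the picture (or observe that $V=E+1$ with connectedness already forces cycle rank zero in the multigraph sense, ruling both out).
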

\begin{Example}\label{unp}
	Consider the discrete free Laplacian $J=-\Delta$, i.e., 	$a_n=c_n=-1$ and $b_n=0$, for all $n\in \Z$. The corresponding $N$-periodic operator is denoted as ${J}_N$. 
	
	Let $\lambda=z+z^{-1}$, inductive knowable the discriminant	
	\begin{equation}\label{PN}
		{P}_N(\lambda)=z^N+z^{-N}=\left(\frac{\lambda+\sqrt{\lambda^2-4}}{2}\right)^N+\left(\frac{\lambda-\sqrt{\lambda^2-4}}{2}\right)^N,
	\end{equation}
	which is a monic real polynomial of degree $N$.  
	
	Let ${z}_n=e^{i \frac{n\pi}{N}}, n=0,1,2,\cdots,N$,  then ${\lambda}_{n}=2cos(\frac{n\pi}{N})$. By direct calculation, we have  $\cal{S}=\{{\lambda}_1,{\lambda}_2,\cdots,{\lambda}_{N-1}\},{P}_N(\cal{S})\subset \{\pm 2\} $, and ${P}_N({\lambda}_{n})=(-1)^n\cdot 2$, for $n=0,1,2,\cdots,N$. For $\mathcal{E}=[-2,2]$, by Corollary \ref{N bands end to end}, $\sigma({J}_N)$ is composed of $N$ bands end to end. Moreover, by Corollary \ref{invariant2}, $\sigma({J}_N)=\sigma({J}_1)={P}_1^{-1}([-2,2])=[-2,2],$
	hence  $$\sigma({J}_N)=\bigcup^N_{n=1}{\gamma_n }=\bigcup^N_{n=1}{[{\lambda}_n,{\lambda}_{n-1}] }.$$
\end{Example}
Finding the criterion for real spectrum of non-self-adjoint\footnote{An Hermitian operator is the physicist's version of an object that mathematicians call a self-adjoint operator.}  is one of the main issue of this subject(\cite{MFC}). In this paper, as applications of Theorem \ref{main2}, we give a necessary and sufficient condition for real spectrum (Theorem \ref{real spectrum}), as well as necessary and sufficient conditions for a single interval spectrum (Theorem \ref{interval spectrum2} and \ref{interval spectrum3}).

\begin{Theorem}\label{real spectrum}
	If $a$ and $c$ are not both $0$, then  	$ \sigma(J)\subset \R$ if and only if the following hold
	
	{\rm(1)} $a=\bar{c}$.
	
	{\rm(2)} $\cal{S}=\{\lambda_1,\lambda_2,\cdots,\lambda_{N-1}\}\subset \R$, with $\lambda_1>\lambda_2>\cdots>\lambda_{N-1}$.
	
	{\rm(3)}  $(-1)^nP(\lambda_n) \geq 2R, n=1,2,\cdots,N-1$.
\end{Theorem}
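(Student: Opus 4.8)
\emph{Proof proposal.} The heart of the matter is a statement about a single real-variable polynomial, and the plan is to isolate it, prove it, and then read off the theorem from it together with Theorem~\ref{main1} and Theorem~\ref{main2}.

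\medskip
\noindent\emph{Key Lemma.} Let $Q$ be a monic polynomial of degree $N\ge 1$ and let $\rho>0$. Then $Q^{-1}([-\rho,\rho])\subseteq\R$ if and only if $Q$ has real coefficients, $Q$ has exactly $N-1$ distinct real critical points $\lambda_1>\cdots>\lambda_{N-1}$, and $(-1)^nQ(\lambda_n)\ge\rho$ for every $n$.
\medskip

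First I would reduce the theorem to this Lemma with $\rho=2R$, $R=|a|$. Recall $\sigma(J)=P^{-1}(\mathcal E)$, and that $\mathcal E$ is infinite, hence so is $\sigma(J)$ (a non-constant polynomial cannot collapse an infinite set to a finite one). Assume $\sigma(J)\subseteq\R$. If $|a|\ne|c|$, then $\mathcal E$ is a genuine ellipse and Theorem~\ref{main1}(2) forces $\sigma(J)$ to contain at least one petal, i.e.\ a simple closed curve; no simple closed curve can be contained in $\R$, a contradiction. Hence $|a|=|c|=:R$, and since $a,c$ are not both $0$ we get $R>0$ and $\mathcal E=e^{i\varphi}[-2R,2R]$ with $\varphi=\tfrac{\varphi_a+\varphi_c}{2}$. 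For every $\lambda\in\sigma(J)$ we then have $\operatorname{Im}\!\big(e^{-i\varphi}P(\lambda)\big)=0$; but $\lambda\mapsto\operatorname{Im}\!\big(e^{-i\varphi}P(\lambda)\big)$ is a real polynomial on $\R$ vanishing on the infinite set $\sigma(J)$, hence vanishes identically, so $e^{-i\varphi}P$ has real coefficients. Its leading coefficient is $e^{-i\varphi}$ (as $P$ is monic), so $e^{i\varphi}=\pm1$; therefore $\mathcal E=[-2R,2R]$, $P$ has real coefficients, and $\varphi_a+\varphi_c\equiv0\pmod{2\pi}$, which gives $c=Re^{i\varphi_c}=Re^{-i\varphi_a}=\bar a$, i.e.\ condition~(1). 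Now $\sigma(J)=P^{-1}([-2R,2R])$, and the Key Lemma delivers conditions (2) and (3). Conversely, (1) gives $|a|=|c|=R>0$ and $\varphi=0$, so $\mathcal E=[-2R,2R]$; condition (2) makes $P'=N\prod_{n=1}^{N-1}(\lambda-\lambda_n)$ a real polynomial, and then (3) — which literally asserts also $P(\lambda_n)\in\R$ — makes $P$ a real polynomial; so the Key Lemma applies and yields $\sigma(J)=P^{-1}([-2R,2R])\subseteq\R$.

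It remains to prove the Key Lemma. For the "if" direction, suppose $Q$ is real with distinct real critical points $\lambda_1>\cdots>\lambda_{N-1}$ and $(-1)^nQ(\lambda_n)\ge\rho$. From $Q'=N\prod(\lambda-\lambda_n)$ one reads off that $Q$ is strictly monotone on each of the $N$ intervals cut out by the $\lambda_n$, that $\lambda_n$ is a local minimum for odd $n$ and a local maximum for even $n$, and — using the sign conditions $(-1)^nQ(\lambda_n)\ge\rho$ together with the behaviour $Q(\lambda)\to\pm\infty$ at $\pm\infty$ dictated by the parity of $N$ — that the closure of $Q$'s image on each of these $N$ intervals contains $[-\rho,\rho]$, while $Q(\lambda_n)\notin(-\rho,\rho)$. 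Hence for each $s\in(-\rho,\rho)$ the equation $Q(\lambda)=s$ has exactly one root in each interval, so $Q-s$ has all $N$ roots real and $Q^{-1}(s)\subseteq\R$; letting $s\to\pm\rho$ and using continuity of roots, $Q^{-1}(\{\pm\rho\})\subseteq\R$ as well. For the converse, assume $Q^{-1}([-\rho,\rho])\subseteq\R$. Being real-valued on the infinite set $Q^{-1}([-\rho,\rho])\subseteq\R$, $Q$ has real coefficients. For all but finitely many $s\in(-\rho,\rho)$ the polynomial $Q-s$ has $N$ distinct real roots; Rolle's theorem applied to any such $s$ produces $N-1$ distinct real roots of $Q'$, which are therefore all its roots, so $Q$ has $N-1$ distinct real critical points $\lambda_1>\cdots>\lambda_{N-1}$, alternately local minima (odd $n$) and local maxima (even $n$). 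Finally, if some $\lambda_n$ were a local minimum with $Q(\lambda_n)>-\rho$, choose $s\in(-\rho,\rho)$ with $s<Q(\lambda_n)$; near $\lambda_n$ one has $Q(\lambda)-s=\big(Q(\lambda_n)-s\big)+\tfrac12 Q''(\lambda_n)(\lambda-\lambda_n)^2+\cdots$ with $Q''(\lambda_n)>0$ and $Q(\lambda_n)-s>0$, so $Q-s$ has a conjugate pair of non-real roots near $\lambda_n$, contradicting $Q^{-1}(s)\subseteq\R$. Hence $Q(\lambda_n)\le-\rho$ for odd $n$, and the mirror argument at local maxima gives $Q(\lambda_n)\ge\rho$ for even $n$, i.e.\ $(-1)^nQ(\lambda_n)\ge\rho$.

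I expect the main obstacle to be the "only if" half of the Key Lemma: one must simultaneously extract the reality and the simplicity of the critical points (from Rolle, once the number of real roots of $Q-s$ is pinned down for an interval of values $s$) and the sign/size constraint on the critical values (from the local quadratic expansion, which manufactures non-real roots as soon as a critical value sits on the wrong side of $\pm\rho$). By comparison the reduction to the Lemma is short — the petal decomposition of Theorem~\ref{main1} eliminates $|a|\ne|c|$ at once, and then "a polynomial vanishing on an infinite subset of $\R$ is identically zero" gives both the reality of $P$ and the identification $\mathcal E=[-2R,2R]$ — and the "if" direction is a routine interval-by-interval root count. One bookkeeping point worth stating explicitly is that condition (3) must be read as asserting $P(\lambda_n)\in\R$ together with the inequality, since this is exactly what, combined with (2), upgrades "$\mathcal S\subseteq\R$" to "$P$ has real coefficients" in the sufficiency direction.
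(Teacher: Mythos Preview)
Your proof is correct and follows essentially the same arc as the paper's: rule out $|a|\ne|c|$ via the petal structure, use ``polynomial real on an infinite real set $\Rightarrow$ real coefficients'' to force $e^{-i\varphi}P$ real and hence $\mathcal E=[-2R,2R]$ and $a=\bar c$, then reduce everything to a statement about a monic real polynomial and $[-2R,2R]$. The packaging into a stand-alone Key Lemma is a clean touch; the paper instead routes through its Corollary~\ref{real spectrum0} (band decomposition of a real spectrum) and invokes Proposition~\ref{local1} to get $|P(\lambda_n)|\ge 2R$, whereas you obtain the critical points and their value constraints directly from Rolle and root counting. Both arguments are short; yours is slightly more self-contained in that it does not call back to the local structure proposition.

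One step to tighten: in the ``only if'' half of the Key Lemma you argue that if a local minimum satisfies $Q(\lambda_n)>-\rho$, then choosing $s\in(-\rho,\rho)$ with $s<Q(\lambda_n)$ and expanding $Q(\lambda)-s$ quadratically at $\lambda_n$ produces a conjugate pair of non-real roots \emph{near $\lambda_n$}. That local conclusion is only justified when $s$ is close to $Q(\lambda_n)$, which you cannot arrange if $Q(\lambda_n)>\rho$. The fix is the counting argument you already have in hand: for generic $s\in(-\rho,\rho)$ there must be exactly one real root in each of the $N$ monotone intervals, so $s$ lies in the image of $Q$ on each; taking $s\uparrow\rho$ and $s\downarrow-\rho$ along such generic values forces $Q(\lambda_n)\le-\rho$ at minima and $Q(\lambda_n)\ge\rho$ at maxima. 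Replace the quadratic-expansion sentence by this and the argument is airtight.
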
	
\begin{Theorem}\label{interval spectrum2}  
	$ \sigma(J)=[\alpha,\beta]$, if and only if  the following hold
	
	{\rm(1)} $a=\bar{c}$.	
	
	{\rm(2)}  $\#\cal{S}=N-1,P(\cal{S})\subset \{\pm  2R\}$.
	
	{\rm(3)}	$P( \{\alpha, \beta\})\subset \{\pm  2R\}$, where $\alpha<\beta$ and $\alpha, \beta\notin \cal{S}$. 
\end{Theorem}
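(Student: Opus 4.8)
The first remark is that hypothesis~(1) merely says that $\mathcal{E}$ is a non-degenerate real segment: since $a,c$ are not both zero, $a=\bar c$ forces $|a|=|c|=R>0$ and $\varphi=\tfrac12(\varphi_a+\varphi_c)\in\{0,\pi\}$, so $\mathcal{E}=e^{i\varphi}[-2R,2R]=[-2R,2R]$ and $\pm2Re^{i\varphi}=\pm2R$; conversely $\sigma(J)\subset\R$ already forces $a=\bar c$ by Theorem~\ref{real spectrum}. So throughout we are in the situation of Theorem~\ref{main2} with $\mathcal{E}=[-2R,2R]$, and I would prove the two implications separately. For the forward implication, assume $\sigma(J)=[\alpha,\beta]$; since $\sigma(J)\subset\R$, Theorem~\ref{real spectrum} gives~(1) and $\#\mathcal{S}=N-1$. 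For the rest I would analyse the map $P\colon[\alpha,\beta]\to[-2R,2R]$: from $\sigma(J)=P^{-1}([-2R,2R])=[\alpha,\beta]$ it is onto (every value in $\mathcal{E}$ has a preimage, necessarily in $\sigma(J)$) and $P([\alpha,\beta])\subseteq[-2R,2R]$. For a regular value $w\in(-2R,2R)$ of $P$, the $N$ simple roots of $P-w$ all lie in $P^{-1}(w)\subseteq[\alpha,\beta]$, so $P$ takes the value $w$ exactly $N$ times on $[\alpha,\beta]$; between consecutive preimages $P$ must turn, producing at least $N-1$ critical points in $(\alpha,\beta)$, and since $\deg P'=N-1$ this is an equality, so $\mathcal{S}\subset(\alpha,\beta)$ (hence $\alpha,\beta\notin\mathcal{S}$) and $P$ is strictly monotone, alternately increasing and decreasing, on the $N$ closed subintervals cut out by $\mathcal{S}\cup\{\alpha,\beta\}$. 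A generic regular $w$ is attained once on each of these pieces while all values of $P$ on $[\alpha,\beta]$ lie in $[-2R,2R]$, so each piece is mapped onto all of $[-2R,2R]$; reading off the two endpoint values of each piece yields $P(\mathcal{S})\subseteq\{\pm2R\}$ (hence~(2)) and $P(\{\alpha,\beta\})\subseteq\{\pm2R\}$ (hence~(3)).

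For the converse, assume (1)--(3). Again $\mathcal{E}=[-2R,2R]$, and~(2) makes every point of $\mathcal{S}$ a simple stationary point ($\tau\equiv1$, because $\sum_{\lambda\in\mathcal{S}}\tau(\lambda)=N-1=\#\mathcal{S}$) lying in $\sigma(J)$ (since $P(\mathcal{S})\subseteq\{\pm2R\}\subset\mathcal{E}$); by Corollary~\ref{N bands end to end} $\sigma(J)$ is then composed of $N$ bands end to end, and hence by Corollary~\ref{bands intersect+} it is a simple arc. The algebraic heart of the matter is a zero count for $Q:=P^2-4R^2$, a monic polynomial of degree $2N$: each $\lambda\in\mathcal{S}$ is a zero of exactly one of $P\mp2R$ of order $\tau(\lambda)+1=2$, hence a zero of $Q$ of order $2$, accounting for $2(N-1)$ of the zeros, and by~(3) the distinct real points $\alpha,\beta\notin\mathcal{S}$ are simple zeros of $Q$, accounting for the remaining~$2$. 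Therefore
$$P(\lambda)^2-4R^2=(\lambda-\alpha)(\lambda-\beta)\bigl(P'(\lambda)/N\bigr)^2 .$$
Differentiating this identity and cancelling the polynomial factor $P'$ yields the second order linear equation $2(\lambda-\alpha)(\lambda-\beta)P''+(2\lambda-\alpha-\beta)P'-2N^2P=0$. The coefficient of $\lambda^d$ in this equation vanishes only when $d^2=N^2$, so a nonzero polynomial solution has degree exactly $N$; as the difference of two solutions is again a solution, of degree $<N$, hence zero, the monic degree-$N$ solution is unique. Since $\alpha,\beta\in\R$, the polynomial $\lambda\mapsto\overline{P(\bar\lambda)}$ is also such a solution, and therefore $P$ has real coefficients.

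To finish: with $P$ real, for $\lambda\in[\alpha,\beta]$ the right-hand side of the displayed identity is $\le0$, so $P(\lambda)^2\le4R^2$ and $\lambda\in P^{-1}([-2R,2R])=\sigma(J)$; thus $[\alpha,\beta]\subseteq\sigma(J)$. On the other hand $P'(\alpha)\ne0$ and $P(\alpha)=\pm2R$ is an endpoint of the segment $\mathcal{E}$, so $P$ is biholomorphic near $\alpha$ and maps a small disc about $\alpha$ onto a neighbourhood of $P(\alpha)$ in which $\mathcal{E}$ is a half-open segment with endpoint $P(\alpha)$; hence $\sigma(J)$ near $\alpha$ is a single arc terminating at $\alpha$, i.e. $\alpha$, and likewise $\beta$, is an endpoint of the simple arc $\sigma(J)$. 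A simple arc has exactly two endpoints, so they are $\alpha$ and $\beta$, and the sub-arc $[\alpha,\beta]$, which contains both, must be all of $\sigma(J)$; thus $\sigma(J)=[\alpha,\beta]$.

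I expect the reality of $P$ in the converse to be the main obstacle, since it is tempting to think conditions~(1) and~(2) already force it, which they do not: for $N=2$ one may take $P(\lambda)=(\lambda+i)^2-2R$ with $a=\bar c$, $|a|=R$, so that $\mathcal{S}=\{-i\}$ and $P(\mathcal{S})=\{-2R\}$, yet $P$ is not real --- and consistently with the theorem, (3) fails, because $P^{-1}(\{\pm2R\})\setminus\mathcal{S}=\{-i\pm2\sqrt R\}$ contains no real point. It is precisely hypothesis~(3), encoded in the polynomial identity above and then in the rigidity of the resulting second order ODE, that excludes this, and carrying that rigidity argument through cleanly is the crux.
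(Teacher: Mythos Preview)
Your proof is correct. The forward direction agrees with the paper's (which dismisses it as obvious, since it follows at once from Corollary~\ref{interval spectrum}); in the converse you share the paper's starting point---from (1)--(3) both of you derive the polynomial identity $P^2-4R^2=(\lambda-\alpha)(\lambda-\beta)(P'/N)^2$---but then take a genuinely different route. The paper rewrites the identity as the separable first-order relation
\[
\frac{P'(\lambda)}{\sqrt{4R^2-P^2(\lambda)}}=\frac{N}{\sqrt{(\beta-\lambda)(\lambda-\alpha)}},
\]
integrates it along each band via $\arccos$, and uses a saturated triangle inequality to pin down the stationary points explicitly as $\lambda_n=\tfrac{\alpha+\beta}{2}+\tfrac{\beta-\alpha}{2}\cos\tfrac{n\pi}{N}$, whose reality then feeds into Corollary~\ref{interval spectrum}. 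You instead \emph{differentiate} the identity to obtain the second-order linear equation $2(\lambda-\alpha)(\lambda-\beta)P''+(2\lambda-\alpha-\beta)P'-2N^2P=0$, observe that it has real coefficients (since $\alpha,\beta\in\R$) and a unique monic degree-$N$ polynomial solution, and conclude $P=\overline{P(\bar{\,\cdot\,})}$; you then read off $[\alpha,\beta]\subseteq\sigma(J)$ directly from the sign of the identity and close with a clean endpoint/topological argument, bypassing Corollary~\ref{interval spectrum} entirely. Your argument is purely algebraic---no branches of square roots or $\arccos$, no contour integrals---and self-contained; the paper's argument, in exchange, delivers the explicit Chebyshev form of $P$ as a by-product, which it immediately reuses in the proof of Theorem~\ref{interval spectrum3}.
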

\begin{Theorem}\label{interval spectrum3}
	$ \sigma(J)=[\alpha,\beta]$, if and only if  
	$\cal{E}=[-2R,2R]$ and 
	$P(\lambda)=(\frac{\beta-\alpha}{4})^N\cdot{P}_N( \frac{4}{\beta-\alpha}\cdot (\lambda -\frac{\alpha+\beta}{2})),$ 
	where $\alpha<\beta,R=(\frac{\beta-\alpha}{4})^N$ and $P_N$ is defined in (\ref{PN}).
\end{Theorem}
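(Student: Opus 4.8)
The plan is to deduce the statement from Theorem~\ref{interval spectrum2} together with the classical rigidity fact that the polynomial Pell equation $U^{2}-(y^{2}-1)V^{2}=1$ has, up to sign, only the Chebyshev solutions. Throughout write $L:=\frac{\beta-\alpha}{4}$ and $m:=\frac{\alpha+\beta}{2}$, so that $x\mapsto Lx+m$ maps $[-2,2]$ onto $[\alpha,\beta]$. For the ``if'' direction, assume $\mathcal{E}=[-2R,2R]$ and $P(\lambda)=L^{N}P_{N}\!\left(\frac{\lambda-m}{L}\right)$ with $R=L^{N}$. Then $P$ is monic of degree $N$, and $P(\lambda)\in[-2R,2R]$ iff $P_{N}\!\left(\frac{\lambda-m}{L}\right)\in[-2,2]$, which by Example~\ref{unp} (where $P_{N}^{-1}([-2,2])=[-2,2]$) happens iff $\frac{\lambda-m}{L}\in[-2,2]$, i.e. $\lambda\in[\alpha,\beta]$; hence $\sigma(J)=P^{-1}(\mathcal{E})=[\alpha,\beta]$.

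Conversely, suppose $\sigma(J)=[\alpha,\beta]$. By Theorem~\ref{interval spectrum2} we get $a=\bar c$, $\#\mathcal{S}=N-1$, $P(\mathcal{S})\subset\{\pm2R\}$, $P(\{\alpha,\beta\})\subset\{\pm2R\}$ and $\alpha,\beta\notin\mathcal{S}$, where $R=|a|=|c|>0$ (if $a=c=0$ the spectrum would be finite). Since $a=\bar c$ forces $|a|=|c|$ and $\varphi_{a}=-\varphi_{c}$, the footnote computation gives $\mathcal{E}=[-2R,2R]$, half of the claim. Every critical point $\lambda$ has $P(\lambda)\in\{\pm2R\}\subset[-2R,2R]$, so lies in $P^{-1}([-2R,2R])=[\alpha,\beta]\subset\R$; thus the $N-1$ distinct critical points are real, $P'$ (degree $N-1$) has only simple real roots, hence $P'\in\R[\lambda]$, and then $P\in\R[\lambda]$ because $P$ takes the real value $\pm2R$ at a critical point. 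Ordering the critical points $\lambda_{1}>\cdots>\lambda_{N-1}$ (all in $(\alpha,\beta)$), they cut $[\alpha,\beta]$ into $N$ closed pieces on each of which $P$ is strictly monotone with values in $[-2R,2R]$, and at each of the $N+1$ endpoints $P$ equals $\pm2R$. Strict monotonicity forces consecutive endpoint values to differ, hence to alternate, so $P$ maps each piece bijectively onto $[-2R,2R]$ and $P^{-1}(\{\pm2R\})=\{\alpha,\lambda_{N-1},\dots,\lambda_{1},\beta\}$, the interior points being double and $\alpha,\beta$ simple roots of $P^{2}-4R^{2}$. Comparing leading coefficients,
$$P(\lambda)^{2}-4R^{2}=(\lambda-\alpha)(\lambda-\beta)\prod_{n=1}^{N-1}(\lambda-\lambda_{n})^{2}.$$

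Now put $\hat Q(x):=L^{-N}P(Lx+m)$, a monic degree-$N$ real polynomial, and $\rho:=R/L^{N}>0$. The displayed identity becomes $\hat Q(x)^{2}-4\rho^{2}=(x^{2}-4)\hat W(x)^{2}$ with $\hat W$ monic of degree $N-1$; substituting $x=2y$ and dividing by $4\rho^{2}$ gives the polynomial Pell equation $U(y)^{2}-(y^{2}-1)V(y)^{2}=1$ with $U=\hat Q(2y)/(2\rho)$, $V=\hat W(2y)/\rho$. Working in the ring $A=\C[y][s]/(s^{2}-(y^{2}-1))$, which via $y=\tfrac12(t+t^{-1})$, $s=\tfrac12(t-t^{-1})$ (so $y+s=t$) is isomorphic to $\C[t,t^{-1}]$, the element $U+Vs$ has norm $U^{2}-(y^{2}-1)V^{2}=1$, hence is a norm-one unit, necessarily $\pm(y+s)^{k}$ for some $k\ge0$. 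Its $s$-free part is $\pm T_{k}(y)$ (Chebyshev), so $U=\pm T_{k}(y)$; matching degrees forces $k=N$, and matching leading coefficients ($T_{N}$ has leading coefficient $2^{N-1}$, $\hat Q$ is monic) forces $\rho=1$ and the plus sign. Hence $R=L^{N}=\left(\frac{\beta-\alpha}{4}\right)^{N}$ and $\hat Q(x)=2T_{N}(x/2)=P_{N}(x)$, i.e. $P(\lambda)=L^{N}P_{N}\!\left(\frac{\lambda-m}{L}\right)=\left(\frac{\beta-\alpha}{4}\right)^{N}P_{N}\!\left(\frac{4}{\beta-\alpha}\bigl(\lambda-\tfrac{\alpha+\beta}{2}\bigr)\right)$, as claimed.

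The main obstacle is precisely this last identification: one must show the structural data (real, monic, $N-1$ simple real critical points with values $\pm2R$, single-interval preimage) pins $P$ down to an \emph{exact} affine rescaling of $P_{N}$, not merely to a polynomial with the same graph shape; the Pell step handles this once the factorization of $P^{2}-4R^{2}$ is in hand. An alternative route is to observe that $P/R$ restricts to an unramified degree-$N$ holomorphic covering $\C\setminus[\alpha,\beta]\to\C\setminus[-2R,2R]$ (all critical points having been absorbed into the branch set), conjugate it by the Joukowski map to a self-covering of $\{|z|>1\}$, and use Schwarz reflection to conclude it is $z\mapsto z^{N}$, again giving both $R=\left(\frac{\beta-\alpha}{4}\right)^{N}$ and $P$ the rescaled $P_{N}$. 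The degenerate case $N=1$ should be checked separately, where every hypothesis holds trivially.
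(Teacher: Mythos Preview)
Your argument is correct, and the ``if'' direction is essentially the paper's (both reduce to $P_{N}^{-1}([-2,2])=[-2,2]$ from Example~\ref{unp}). The converse, however, follows a genuinely different route. The paper does not re-derive the factorization of $P^{2}-4R^{2}$ here; instead it imports from the \emph{proof} of Theorem~\ref{interval spectrum2} the explicit values $\widehat\lambda_{n}=2\cos(n\pi/N)$, which were obtained there by integrating the separable ODE $P'/\sqrt{4R^{2}-P^{2}}=N/\sqrt{(\lambda_{0}-\lambda)(\lambda-\lambda_{N})}$ band by band and forcing equality in a triangle inequality. With the critical points known exactly, the paper then writes $P\mp2R$ as products, compares them to the known factorizations of $P_{N}\mp2$, and reads off both $R=L^{N}$ and the formula for $P$. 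Your approach instead uses only the \emph{statement} of Theorem~\ref{interval spectrum2}, independently rebuilds the factorization $P^{2}-4R^{2}=(\lambda-\alpha)(\lambda-\beta)\prod(\lambda-\lambda_{n})^{2}$ from the monotone-piece picture, rescales it to a Pell equation, and identifies the solution via the unit group of $\C[t,t^{-1}]$. This is more structural: it avoids the ODE entirely, yields $R=L^{N}$ automatically from the leading-coefficient match, and makes transparent why no polynomial other than (rescaled) Chebyshev can occur. The paper's route is more elementary in that it needs only calculus, and it has the side benefit of locating the critical points explicitly before identifying $P$; your route trades that for a cleaner rigidity statement and would generalize more readily to other Pell-type situations. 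The Joukowski/covering sketch you mention at the end is a third valid path, closer in spirit to yours than to the paper's.
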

More detailed results will be given in  Section 3.

The paper is arranged as the following: In Section 2,  we will discuss the global properties of inverse image of a curve under $P$, then as applications, we give the proof of Theorem \ref{main1} and Theorem \ref{main2}. In Section 3, as applications of Theorem \ref{main2}, we give necessary and sufficient conditions for $ \sigma(J)\subset \R$ and  for $ \sigma(J)=[\alpha, \beta]$  respectively,  and we will also  discuss the case of $ \sigma(J)$ belongs to a straight line in the complex plane $\C $.

\section{Proof of Theorem \ref{main1} and Theorem \ref{main2}} 
\subsection{The inverse image of a curve under polynomial}

In this section, we first discuss the global structure of $P^{-1}(\gamma) $ for a fixed  monic complex polynomial $P$ of degree $N$ and a piecewise smooth simple curve $\gamma$.  The most  interesting cases in applications are when $\gamma$ is an ellipse or a line segment. We first give a local description for $P^{-1}(\gamma)$.

\begin{Proposition}\label{local1}
	Let $\gamma=\{z=z(t) | t\in (t_0-\delta, t_0+\delta)\}$ be a smooth simple arc, with $z_0=z(t_0)$ and $\lambda_0 \in P^{-1}(z_0)$. Then we can choose a
	suffciently small  neighborhood  $U$ of $z_0$  such that $P(\cal{S}) \cap U \backslash \{z_0\}=\emptyset$. We denote by  $\hat{V}$ the maximum connected domain  in $P^{-1}(U)$  containing $\lambda_0$. 
	Then $P^{-1}(\gamma)\cap  \hat{V}$  consists of $2(\tau(\lambda_0)+1)$ smooth arcs that have a common  $\lambda_0$, and  these arcs divide $\hat{V}$
	into $2(\tau(\lambda_0)+1)$  regions. Moreover, adjacent arcs meet at $\lambda_0$ with  angle $\frac{\pi}{\tau(\lambda_0)+1}$. Let $U_1$ and  $U_2$  be two parts of $U$ separated by $\gamma$. Then the images of  these regions under $P$ are $U_1$ and  $U_2$ respectively.  Specifically, if $\tau(\lambda_0)=0$, i.e., $\lambda_0$ is not a stationary point of $P$, then $P^{-1}(\gamma)$ near $\lambda_0$ is a  smooth arc.  
	
\end{Proposition}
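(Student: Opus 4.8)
The plan is to reduce the statement to the model map $\zeta\mapsto z_0+\zeta^{k+1}$, with $k:=\tau(\lambda_0)$, by a holomorphic change of coordinate at $\lambda_0$, and then to read the asserted picture off the preimage of a smooth arc under a power map. Since $P-z_0$ vanishes at $\lambda_0$ to order exactly $k+1$, I would first write $P(\lambda)-z_0=(\lambda-\lambda_0)^{k+1}g(\lambda)$ with $g$ a polynomial and $g(\lambda_0)=P^{(k+1)}(\lambda_0)/(k+1)!\ne0$, choose a holomorphic branch of $g^{1/(k+1)}$ on a small disk, and set $h(\lambda):=(\lambda-\lambda_0)\,g(\lambda)^{1/(k+1)}$. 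Then $h(\lambda_0)=0$ and $h'(\lambda_0)\ne0$, so $h$ restricts to a biholomorphism of a neighbourhood $\Omega$ of $\lambda_0$ onto a disk $\D_\rho(0)$, it is conformal at $\lambda_0$, and $P=z_0+h(\,\cdot\,)^{k+1}$ on $\Omega$.

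Next I would choose $U=\D_\epsilon(z_0)$ with $\epsilon$ small enough that $P(\cal{S})\cap\overline U\subseteq\{z_0\}$ (possible since $P(\cal{S})$ is finite), that $\epsilon<\rho^{k+1}$, and that $\gamma\cap U$ is a single arc crossing $U$ from boundary to boundary (possible since $\gamma$ is a smooth embedded arc and $z_0$ lies in its interior). With $s:=\epsilon^{1/(k+1)}<\rho$, one checks that $\hat V=h^{-1}(\D_s(0))$: this is an open connected neighbourhood of $\lambda_0$ contained in $P^{-1}(U)$, while $P$ maps its boundary $h^{-1}(\partial\D_s)$ into $\partial U$ (there $|P-z_0|=s^{k+1}=\epsilon$), so by connectedness no part of $P^{-1}(U)$ through $\lambda_0$ escapes it. Hence $P\colon\hat V\to U$ is, through $h$, holomorphically conjugate to $\zeta\mapsto z_0+\zeta^{k+1}$ on $\D_s$: a proper branched cover of degree $k+1$, branched only over $z_0$, with $\lambda_0$ its unique preimage.

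It then remains to analyse $E:=\{\zeta\in\D_s:\zeta^{k+1}\in\tilde\gamma\}$ near $0$, where $\tilde\gamma:=(\gamma-z_0)\cap\D_{s^{k+1}}(0)$ is a smooth regular simple arc through $0$; since $h$ is conformal at $\lambda_0$ and sends smooth arcs to smooth arcs, $P^{-1}(\gamma)\cap\hat V=h^{-1}(E)$ inherits at $\lambda_0$ exactly the structure $E$ has at $0$. Parametrising $\tilde\gamma$ as $w=f(t)=t\,u(t)$ with $f$ smooth, $u(0)\ne0$, $\alpha:=\arg u(0)$, and substituting $t=\sigma^{k+1}$ for $t>0$ and $t=-\sigma^{k+1}$ for $t<0$ while extracting smooth $(k+1)$-st roots of the nonvanishing $u$, each of the $k+1$ roots of $\zeta^{k+1}=f(t)$ over $t>0$, and each of the $k+1$ over $t<0$, becomes a smooth regular arc $\sigma\mapsto\sigma\,v(\sigma)$, $v$ smooth with $v(0)\ne0$, issuing from $0$; these $2(k+1)$ arcs have tangent directions $e^{i(\alpha+\pi m)/(k+1)}$, $m=0,\dots,2k+1$, distinct and equally spaced by $\pi/(k+1)$, they exhaust $E\setminus\{0\}$, and they are pairwise disjoint off $0$ (two of them meeting at $\zeta\ne0$ would share the value $\zeta^{k+1}\in\tilde\gamma$ and hence be the same branch over the same $t$); each runs from $0$ to $\partial\D_s$, so together they cut $\D_s$ into $2(k+1)$ curvilinear sectors. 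Transporting by $h$ yields the asserted $2(k+1)$ smooth arcs through $\lambda_0$, meeting at angle $\pi/(\tau(\lambda_0)+1)$, and the $2(k+1)$ regions. For the images: $U\setminus\gamma$ has exactly two components $U_1,U_2$, each simply connected and disjoint from $z_0$, so over each $U_i$ the cover $P\colon\hat V\to U$ is genuinely $(k+1)$-sheeted and $(P|_{\hat V})^{-1}(U_i)$ consists of exactly $k+1$ of these sectors; crossing from a sector into an adjacent one through the shared arc flips the side of $\tilde\gamma$ hit by $P$ — track $\arg(\zeta^{k+1})=(k+1)\arg\zeta$ as $\arg\zeta$ increases by $\pi/(k+1)$ — so the images alternate between $U_1$ and $U_2$ around $\lambda_0$. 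When $\tau(\lambda_0)=0$, i.e. $k=0$, the normal form of the first paragraph conjugates $P$ near $\lambda_0$ to $\zeta\mapsto z_0+\zeta$, so $P^{-1}(\gamma)\cap\hat V=h^{-1}(\tilde\gamma)$ is a single smooth arc.

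The conceptual core — that near $\lambda_0$ the polynomial $P$ is a $(k+1)$-fold branched cover, so a smooth arc through the branch value pulls back to $2(k+1)$ equiangular arcs — is just the local behaviour of $\zeta\mapsto\zeta^{k+1}$, and the normal form together with the identification $\hat V=h^{-1}(\D_s)$ is routine. The step needing genuine care is the power-map analysis: verifying that the $(k+1)$-st roots really assemble into $C^\infty$ (not merely continuous) arcs issuing from $0$ with the stated tangents, and the bookkeeping for the alternation of $U_1$ and $U_2$. Both are handled by the substitution $t=\pm\sigma^{k+1}$ and the elementary fact that a nonvanishing smooth function admits smooth roots.
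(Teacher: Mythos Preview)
Your proof is correct and takes a genuinely different route from the paper's. The paper argues directly with arguments: it writes the Taylor expansion $P(\lambda)-P(\lambda_0)=\frac{P^{(k)}(\lambda_0)}{k!}(\lambda-\lambda_0)^k+O((\lambda-\lambda_0)^{k+1})$ with $k=\tau(\lambda_0)+1$, observes that along $P^{-1}(\gamma)$ one has $\arg(P(\lambda)-P(\lambda_0))\to\theta$ or $\theta+\pi$ (where $\theta=\arg z'(t_0)$), and solves for the possible limiting values of $\arg(\lambda-\lambda_0)$ to obtain the $2k$ equally spaced tangent directions; the smoothness of the arcs and the division into regions mapping to $U_1,U_2$ are then asserted with little further detail. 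Your approach instead passes through the holomorphic normal form $P=z_0+h^{k+1}$ and reduces everything to the model map $\zeta\mapsto\zeta^{k+1}$, including a clean identification $\hat V=h^{-1}(\D_s)$ and an explicit $C^\infty$ parametrisation of the $2(k+1)$ arcs via the substitution $t=\pm\sigma^{k+1}$. What the paper's argument buys is economy: it avoids invoking the local normal form and stays at the level of limits of arguments. What your argument buys is rigour and transparency: the smoothness of the arcs, their disjointness off $\lambda_0$, the fact that they actually reach $\partial\hat V$, and the alternation of the images between $U_1$ and $U_2$ are all made explicit rather than left implicit, and the conformality of $h$ at $\lambda_0$ immediately gives the angle $\pi/(\tau(\lambda_0)+1)$.
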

\begin{proof}
	If $\tau(\lambda_0)=0$, by the Implicit Function Theorem, $P^{-1}(\gamma)$ near $\lambda_0$ is a  smooth arc.

	Let $\lambda \in \hat{V}\cap P^{-1}(\gamma)$ and 
	$\theta =\arg(z'(t_0))$.
	Then
	$P(\lambda) \rightarrow P(\lambda_0)$, when $\lambda\rightarrow \lambda_{0}$,
	and $$\lim_{\lambda\rightarrow \lambda_{0}}\arg(P(\lambda)-P(\lambda_0))=\lim_{t\rightarrow t_{0}}\arg(z(t)-z(t_0))=\begin{cases}\theta,& t< t_{0} \\ \theta +\pi,& t>t_{0}. \end{cases} 
	$$
	For $\lambda_0 \in P^{-1}(z_0)$, we let $k=\tau(\lambda_0)+1$, then
	$$	P(\lambda)-P(\lambda_0)=\frac{P^{(k)}( \lambda _{0})  }{k!}(\lambda-\lambda_0)^k+O(\lambda-\lambda_0)^{k+1},$$ as $\lambda\rightarrow \lambda_{0}$.
	Thus, there exists $n\in \Z$, such that
	\begin{align*}\arg(P(\lambda)-P(\lambda_0))+2n\pi&=\arg(\frac{P^{(k)}( \lambda _{0})  }{k!}(\lambda-\lambda_0)^k+O(\lambda-\lambda_0)^{k+1})\\
		&=\arg(P^{(k)}( \lambda _{0}))+k\arg((\lambda-\lambda_0)+\arg(1+O(\lambda-\lambda_0)),
	\end{align*}
	as $\lambda\rightarrow \lambda_{0}$. Taking its limit, we get
	\begin{align*}
		\lim_{\lambda\rightarrow \lambda_{0}}\arg((\lambda-\lambda_0)&=\frac{	\arg(P(\lambda)-P(\lambda_0))-\arg(P^{(k)}( \lambda _{0}) )+2n\pi}{k}\\
		&=\begin{cases}\frac{\theta -\arg(P^{(k)}( \lambda _{0}) )+2n\pi}{k},& t< t_{0} \\ \frac{\theta -\arg(P^{(k)}( \lambda _{0}) )+(2n+1)\pi}{k},& t>t_{0}. \end{cases}
	\end{align*}
	This means that $P^{-1}(\gamma)$ near $\lambda_0$ consists of $2k$ smooth arcs with a common endpoint $\lambda_0$. Moreover, adjacent arcs meet at $\lambda_0$ with an angle $\frac{\pi}{k}$,  and the images of  these arcs under $P$ belong to  $\gamma$ with $t>t_0$  and  $t<t_0$, respectively. 
	
	Because $P(\cal{S}) \cap U\backslash \{z_0\}=\emptyset$, so $S\cap \hat{V} \backslash \{\lambda_0\}=\emptyset$. Thus, these $2k$  arcs only intersect at $\lambda _{0}$ in $\hat{V}$ and 
	divide $\hat{V}$ into $2k$  regions. The images of  these regions under $P$ are $U_1$ and  $U_2$, respectively. 
\end{proof}

\begin{Remark}\label{analytic}      
	If $\gamma $ is an analytic arc and $\tau(\lambda_0)=0$, then $P^{-1}(\gamma)$ near $\lambda_0$ is an analytic arc. 
\end{Remark}
\begin{Remark}\label{local1+}
	If $\gamma $ is a piecewise smooth simple arc, with angle $\vartheta $ at the piecewise point $z_0$. Then other conclusions in the Proposition \ref{local1} are still valid, except that those arcs meet at $\lambda_0$ with angle $\frac{\vartheta }{\tau(\lambda_0)+1}$ and $\frac{2\pi-\vartheta }{\tau(\lambda_0)+1}$, respectively. 
\end{Remark}
\begin{Remark}\label{local decom}
	For any $z_0\in \gamma$ and $\lambda_0 \in P^{-1}(z_0)$, we let $U_1 $ be the left neighborhood of $z_0$ along the increasing direction of $t$.
	Then, there are $\tau(\lambda_0)+1$ regions in Proposition \ref{local1} such that $P$ is homomorphic of those rigions to  $U_1$. The boundary curves of these $\tau(\lambda_0)+1$ regions in $\hat{V}$ are exactly the $2(\tau(\lambda_0)+1)$ arcs given in Proposition \ref{local1}, and the boundary of each region in $\hat{V}$ is a band of $P^{-1}(\gamma\cap U)$, i.e.,  it is one-to-one by $P$ to $\gamma\cap U$. 
	
	If $\tau(\lambda_0)>0$, there are other ways to define the bands.  For example, one can similarly  use the right neighborhood to define bands.

\end{Remark}

\begin{Corollary}\label{band deco1}
	Let $\gamma=\{z(t)|t\in I\}$ be a piecewise smooth simple arc, where $I\subset \R$ is an interval, then there is a  band decomposition $P^{-1}(\gamma)=\bigcup^N_{n=1}{\gamma_n }$. 
\end{Corollary}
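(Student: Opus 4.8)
The plan is to construct the $N$ bands by tracking the $N$ points of the fibre $P^{-1}(z(t))$ as $t$ runs over $I$, resolving the collisions of these points --- which occur exactly over $P(\cal{S})$ --- by means of the local picture in Proposition~\ref{local1} together with the ``left neighbourhood'' convention of Remark~\ref{local decom}. A useful preliminary observation is that any continuous section $\sigma\colon I\to\C$ of $P$ over $\gamma$ (i.e.\ $P\circ\sigma=z$) is automatically injective, since $z$ is injective; hence a band over the simple arc $\gamma$ is nothing but a continuous section of $P$ over $\gamma$, and the task reduces to producing $N$ continuous sections whose images together exhaust $P^{-1}(\gamma)$.

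First I would isolate the finitely many bad parameters. Since $\deg P'=N-1$, the set $\cal{S}$ is finite, hence so is $P(\cal{S})$; as $z$ is injective and $\gamma$ is piecewise smooth, the parameters $t$ with $z(t)\in P(\cal{S})$, or at which $\gamma$ fails to be smooth, form a finite set $F=\{t_1<\cdots<t_m\}\subset I$. Restricted over $\C\setminus P(\cal{S})$ the polynomial $P$ is an $N$-sheeted covering, so over each component subarc $\gamma(J)$, with $J$ a component of $I\setminus F$, there are exactly $N$ disjoint continuous sections of $P$; and by the local description of $P^{-1}(\gamma)$ near a point of $P^{-1}(z(t_j))$ given in Proposition~\ref{local1}, each of these sections extends continuously to the closed subinterval $\overline J$. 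This produces, piece by piece, a family of bands over the closed subarcs into which $F$ cuts $\gamma$.

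Next I would glue these local bands across each bad parameter $t_j$ interior to $I$ (endpoints of $I$ require no gluing). Fix $\lambda_0\in P^{-1}(z(t_j))$ of order $\tau_0=\tau(\lambda_0)$. By Proposition~\ref{local1}, and by Remark~\ref{local1+} if $z(t_j)$ is a corner of $\gamma$, the set $P^{-1}(\gamma)$ near $\lambda_0$ consists of $2(\tau_0+1)$ arcs issuing from $\lambda_0$, of which $\tau_0+1$ lie over the $t<t_j$ side and $\tau_0+1$ over the $t>t_j$ side of $\gamma$; these are precisely the germs at $\lambda_0$ of the sections over the two subintervals adjacent to $t_j$. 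Remark~\ref{local decom} pairs the $2(\tau_0+1)$ arcs into $\tau_0+1$ local bands, each one-to-one by $P$ onto a neighbourhood of $z(t_j)$ in $\gamma$, which is exactly a bijection between the $\tau_0+1$ incoming germs and the $\tau_0+1$ outgoing germs at $\lambda_0$. Taking the union over $\lambda_0\in P^{-1}(z(t_j))$ --- whose local degrees $\tau_0+1$ sum to $N$ --- gives a bijection between the $N$ incoming and the $N$ outgoing sections at $t_j$. Concatenating sections along these bijections, the $N$ sections over the leftmost piece extend uniquely to $N$ continuous sections $\gamma_1,\dots,\gamma_N$ of $P$ over all of $I$, that is, to $N$ bands.

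Finally I would check that $P^{-1}(\gamma)=\bigcup_{n=1}^N\gamma_n$: over any $t\notin F$ the $N$ sections realize all $N$ points of $P^{-1}(z(t))$, while over $z(t_j)$ the number of the $\gamma_n$ passing through a given $\lambda_0\in P^{-1}(z(t_j))$ is $\tau_0+1\ge 1$ by construction, so every fibre point is hit. The hard part is the gluing step: one must verify that the left-neighbourhood pairing of Remark~\ref{local decom} genuinely matches each incoming section-germ at $\lambda_0$ with exactly one outgoing germ --- so that the concatenations are well defined and continuous --- and that the bookkeeping ``$\tau_0+1$ bands through each $\lambda_0$, summing to $N$'' is consistent, so that exactly $N$ global bands arise and their images cover $P^{-1}(\gamma)$. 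No uniqueness is claimed here, in keeping with the fact that at a stationary value one could equally well pair germs via the right neighbourhood.
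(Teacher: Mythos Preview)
Your proposal is correct and follows essentially the same route as the paper: track the $N$ roots of $P(\lambda)=z(t)$ as $t$ varies, use the covering structure away from $P(\cal{S})$, and resolve the branching at stationary values via the left-neighbourhood pairing of Remark~\ref{local decom} (together with Remark~\ref{local1+} at corners). The paper compresses all of this into two sentences, whereas you have spelled out the isolation of the finite bad set, the extension to closures, the gluing bijection, and the fibre-counting check; but the underlying argument is the same.
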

\begin{proof}
	For any $t\in I$, equation $P(\lambda)=P(z(t))$ has $N$ roots $\lambda_n(t),n=1,2,\cdots,N$, counting the multiplicity.  Let $U_1(z(t))$ be the left neighborhood along the increasing direction of $t$. 
	By Remark \ref{local1+} and Remark \ref{local decom}, the local trend of the $N$ curves is completely determined. Thus we obtain a band decomposition $P^{-1}(\gamma)=\bigcup^N_{n=1}{\gamma_n}$, with $\gamma_n=\{\lambda_n(t)|t\in I\}$. 
\end{proof}

\begin{Remark}\label{not unique}
	If $\cal{S}\cap P^{-1}(\gamma)=\emptyset$, all $N$ bands will be uniquely defined and disjoint.
	If there is $\lambda\in \cal{S}\cap P^{-1}(\gamma)$, such that $P(\lambda)$ is not an endpoint of $\gamma$, then $P^{-1}(\gamma)$ contains  $2(\tau(\lambda)+1)$ arcs  in a small neighborhood of $\lambda$ and $\lambda$ is the unique common point of them. By Remark \ref{local decom}, the way of grouping two of them into bands   is not unique, so the band decomposition method is no longer unique (See Example \ref{example local doco}).
\end{Remark}

\begin{Corollary}\label{global0}
	Let $\cal{W}\subset \C$ be a simply connected bounded closed set, then $P^{-1}(\cal{W})$ is a bounded closed set, and the set $\C \backslash P^{-1}(\cal{W})$ is connected.
\end{Corollary}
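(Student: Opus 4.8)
The plan is to prove the two assertions separately, deriving both from elementary properties of polynomial maps together with the local picture already established in Proposition \ref{local1}. For boundedness of $P^{-1}(\mathcal{W})$: since $\mathcal{W}$ is bounded, choose $R>0$ with $\mathcal{W}\subset\{|z|\le R\}$; because $P$ is a monic polynomial of degree $N$, we have $|P(\lambda)|\to\infty$ as $|\lambda|\to\infty$, so there is $R'$ with $|P(\lambda)|>R$ whenever $|\lambda|>R'$, whence $P^{-1}(\mathcal{W})\subset\{|\lambda|\le R'\}$. Closedness is immediate from continuity of $P$: $P^{-1}(\mathcal{W})$ is the preimage of a closed set. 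The substantive part is that $\C\setminus P^{-1}(\mathcal{W})$ is connected.

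For connectedness of the complement, I would first record that $P$, as a nonconstant polynomial, is a proper open map $\C\to\C$, and restricts to a proper covering map of degree $N$ away from the critical values $P(\mathcal{S})$. Set $\Omega=\C\setminus\mathcal{W}$; since $\mathcal{W}$ is a simply connected bounded closed set, its complement $\Omega$ in $\C$ is connected (this is the planar fact that the complement of a simply connected compact set is connected — one may invoke that $\hat{\C}\setminus\mathcal{W}$ is connected and simply connected, so $\Omega=\hat\C\setminus(\mathcal W\cup\{\infty\})$ is connected). Now $\C\setminus P^{-1}(\mathcal{W})=P^{-1}(\Omega)$. The claim is that this preimage is connected.

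To see this, I would argue as follows. Let $V$ be a connected component of $P^{-1}(\Omega)$. Since $P$ is an open proper map, $P|_V:V\to\Omega$ is a branched covering of $V$ onto $\Omega$ (properness passes to the restriction because $P^{-1}(\Omega)$ is closed in $\C\setminus P^{-1}(\mathcal W)$ and $P$ is proper; surjectivity of $P|_V$ onto $\Omega$ follows because the image $P(V)$ is open and closed in $\Omega$ and $\Omega$ is connected). In particular each component $V$ already surjects onto $\Omega$, so $P^{-1}(\mathcal W)$ together with the finitely many stationary points controls how components can fit together. To promote ``each component surjects'' to ``there is only one component,'' I use that $\Omega$ is unbounded and connected: pick $\lambda_\infty$ of large modulus in the single unbounded component $V_0$ of $P^{-1}(\Omega)$ — large modulus forces $P(\lambda)$ of large modulus, hence in $\Omega$, and this whole neighborhood of $\infty$ is connected and lies in one component $V_0$. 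Any other component $V$ is bounded; but $P|_{V}:V\to\Omega$ would then be a proper map from a bounded open set onto the unbounded set $\Omega$, which is impossible — a proper continuous map cannot send a relatively compact set onto a non-compact one (preimages of compact exhaustions of $\Omega$ would exhaust $V$, yet $\overline V$ is compact, forcing $\Omega$ compact, a contradiction). Hence $P^{-1}(\Omega)=V_0$ is connected.

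The main obstacle I anticipate is making the ``each component is a proper branched cover of $\Omega$, hence unbounded'' step fully rigorous: one must be careful that properness of $P:\C\to\C$ restricts correctly to $P:P^{-1}(\Omega)\to\Omega$ and then to each component, and that openness plus the connectedness of $\Omega$ genuinely forces surjectivity of $P$ on each component. An alternative, perhaps cleaner, route to the same conclusion avoids covering-space language entirely: given any $\lambda\in P^{-1}(\Omega)$, one shows it can be joined to the point at infinity within $P^{-1}(\Omega)$. Indeed $P(\lambda)\in\Omega$ and $\Omega$ is connected and open, so there is an arc in $\Omega$ from $P(\lambda)$ out to a large circle $\{|z|=\rho\}$ with $\rho$ chosen so large that $\{|z|\ge\rho\}\subset\Omega$ and $P^{-1}(\{|z|\ge\rho\})$ lies outside any disk meeting $\mathcal S$; lifting this arc through the covering $P$ (possible since the arc avoids the finitely many critical values, after a small perturbation of the arc keeping endpoints, using Proposition \ref{local1} or just path-lifting for covering maps over $\C\setminus P(\mathcal S)$) gives a path in $P^{-1}(\Omega)$ from $\lambda$ into the region $|\cdot|\ge\rho$, which is connected. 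So all points of $P^{-1}(\Omega)$ lie in the component containing $\{|\lambda|$ large$\}$, and that component is all of $P^{-1}(\Omega)$. I would present this path-lifting version as the primary argument, with the perturbation-to-avoid-critical-values step being the one place requiring care.
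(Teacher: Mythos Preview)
Your proposal is correct, and the path-lifting version you present as primary is essentially the paper's own argument: pick $\lambda$ in the complement, draw a curve $\gamma$ from $P(\lambda)$ to infinity inside $\C\setminus\mathcal{W}$ (possible since $\mathcal{W}$ is simply connected and compact), and lift. The only packaging difference is that the paper argues by contradiction and, instead of invoking covering-space path-lifting with a perturbation to avoid critical values, it simply cites the band decomposition of Corollary~\ref{band deco1}: the band of $P^{-1}(\gamma)$ through $\lambda$ is automatically a continuous curve with $P$-image all of $\gamma$, hence unbounded, and so must cross $P^{-1}(\mathcal{W})$ to escape a putative bounded component---contradiction. Using the band decomposition is slightly more economical here because it already handles stationary points uniformly, so no perturbation step is needed. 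Your alternative route via properness (each component of $P^{-1}(\Omega)$ is a proper open surjection onto the unbounded set $\Omega$, hence cannot be bounded) is a genuinely different and more abstract argument; it works, but requires more care to justify than the one-line curve-lifting the paper uses.
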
	
\begin{proof}
	$P^{-1}(\cal{W})$ is obviously a bounded closed set, because $\cal{W}\subset  \C$ is a bounded closed set and  $P$ is a polynomial.
	
	If the set $\C \backslash P^{-1}(\cal{W})$ is not connected, then there is a bounded closed set $\Lambda \subset \C \backslash P^{-1}(\cal{W})$, which is surrounded by $P^{-1}(\cal{W})$. Take $\lambda\in \Lambda $, we have $P(\lambda)\notin \cal{W}$. Because $\cal{W}$ is simply connected, we can start from $P(\lambda)$ and make a smooth curve $\gamma$ that tends to infinity, such that $\gamma\cap \cal{W}=\emptyset$ and thus  $P^{-1}(\gamma)\cap P^{-1}(\cal{W})=\emptyset$.
	
	On the other hand, $\gamma$ is unbounded, so the band $\gamma_n$ of $P^{-1}(\gamma)$ containing $\lambda$ is also unbounded.
	While $P^{-1}(\cal{W})$ is bounded, so $\gamma_n\cap P^{-1}(\cal{W})\neq \emptyset$ and  $P^{-1}(\gamma)\cap P^{-1}(\cal{W})\neq \emptyset$, which is a contradiction.
\end{proof}

From now on, let's focus on that $\gamma$  is a simple closed curve.
Without losing generality, we always assume $\gamma$ is counterclockwise when the parameter increases, otherwise we reverse the parameter.

\begin{Corollary}\label{band deco2}
	Let $\gamma=\{z(t)|t\in (\alpha, \beta ]\}$ be a piecewise smooth simple closed curve, where $z(t),t\in \R$ is a periodic function and $z(\alpha)=z(\beta)$.
	Then
	
	{\rm(1)}	$P^{-1}(\gamma)$ has a unique decomposition of petals.
	
	{\rm(2)}	$P^{-1}(\gamma)$  has a unique band decomposition, such that each petal is composed of bands end to end.
\end{Corollary}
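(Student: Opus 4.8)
The plan is to bootstrap from the arc case, Corollary~\ref{band deco1}, by cutting $\gamma$ at its base point and re-gluing according to the resulting monodromy. After shifting the parametrisation along $\gamma$ one may assume the base point $z_0:=z(\alpha)=z(\beta)$ avoids the finite set $P(\cal{S})$. Regarding $\gamma$ momentarily as the arc $\{z(t)\mid t\in[\alpha,\beta]\}$ and applying Corollary~\ref{band deco1} yields continuous branches $\lambda_1(t),\dots,\lambda_N(t)$ on $[\alpha,\beta]$ with $P(\lambda_n(t))=z(t)$, selected by the left-neighbourhood ($U_1$) convention of Remark~\ref{local decom}; their traces $\gamma_n:=\{\lambda_n(t)\mid t\in(\alpha,\beta]\}$ are bands over the closed curve $\gamma$, and $\bigcup_{n=1}^N\gamma_n=P^{-1}(\gamma)$. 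Since $z_0\notin P(\cal{S})$, the two $N$-element sets $\{\lambda_n(\alpha)\}_n$ and $\{\lambda_n(\beta)\}_n$ both equal $P^{-1}(z_0)$, so there is a well-defined permutation $\pi\in S_N$ with $\lambda_n(\beta)=\lambda_{\pi(n)}(\alpha)$. (If the given base point lies in $P(\cal{S})$ one instead matches the branches at $t=\alpha,\beta$ through the $U_1$-sectors of Proposition~\ref{local1}; the same $\pi$ results.)

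For each cycle $(n_1\,\cdots\,n_\ell)$ of $\pi$ the bands $\gamma_{n_1},\dots,\gamma_{n_\ell}$ join end to end, and one trip around the cycle closes them into a closed curve $C\subset P^{-1}(\gamma)$ on which $P$ is $\ell$-to-one; these curves are pairwise non-overlapping with union $P^{-1}(\gamma)$. Hence, once each $C$ is shown to be a \emph{simple} closed curve, one has simultaneously a decomposition of $P^{-1}(\gamma)$ into petals and the band decomposition $P^{-1}(\gamma)=\bigcup_{n=1}^N\gamma_n$ with each petal a concatenation of whole bands. The simplicity of $C$ is the main obstacle. If it failed, two of the branches $\lambda_{n_1},\dots,\lambda_{n_\ell}$ would collide at some parameter $t_*$, hence at a stationary point $\lambda_*\in\cal{S}\cap P^{-1}(\gamma)$; so one must prove that branches through a common stationary point lie in \emph{distinct} cycles of $\pi$. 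The relevant input is Proposition~\ref{local1} together with Remark~\ref{local decom}: near $\lambda_*$ the set $P^{-1}(\gamma)$ is the union of the $\tau(\lambda_*)+1$ ``bands through $\lambda_*$'' (the boundaries of the $U_1$-sectors), meeting pairwise only at $\lambda_*$, and the $U_1$ convention makes each branch through $\lambda_*$ follow exactly one of these, distinct branches following distinct ones. To upgrade this to the statement about cycles I would argue either (i) via the local normal form $P(\lambda)-P(\lambda_*)=h(\lambda)^{\tau(\lambda_*)+1}$, computing that traversing $\gamma$ once carries the incoming ray of each $U_1$-sector at $\lambda_*$ to its outgoing ray, so that no cycle can revisit $\lambda_*$; or (ii) by perturbing $\gamma$ to a nearby simple closed curve $\gamma_\epsilon$ disjoint from $P(\cal{S})$, for which $P^{-1}(\gamma_\epsilon)$ is a compact smooth $1$-manifold---a disjoint union of embedded circles---and checking that as $\epsilon\to 0$ these circles converge in the Hausdorff metric exactly to the curves $C$, which are therefore simple.

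Uniqueness follows from the same local picture. A petal goes straight through every non-stationary point (where $P^{-1}(\gamma)$ is a single smooth arc), and at a stationary point $\lambda_*$---using that the $2(\tau(\lambda_*)+1)$ arcs of $P^{-1}(\gamma)$ alternate between the two sides of $\gamma$---one checks that in any petal decomposition each petal must use a consecutive pair of these arcs, i.e.\ realise one of the $\tau(\lambda_*)+1$ bands through $\lambda_*$; of the two cyclic ways to pair the arcs into such bands, only the one given by the $U_1$-sectors produces curves that actually close up into simple loops (the other produces a single non-simple curve through $\lambda_*$, exactly as in the figure-eight example $P(\lambda)=\lambda^2$ with $\gamma$ passing through $0$). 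Thus every petal decomposition coincides with the one constructed above, which proves~(1). For~(2): with the petals fixed, a band is---being parametrised in step with $z(\cdot)$ and carried one-to-one onto $\gamma$---an arc of a petal delimited by consecutive points of $P^{-1}(z_0)$, and insisting that whole bands make up the petals rules out the band-switching at interior stationary points responsible for non-uniqueness in the arc case (Remark~\ref{not unique}); hence the band decomposition is unique and equals the one above.
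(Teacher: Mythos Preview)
Your approach is essentially the paper's: apply Corollary~\ref{band deco1} to $\gamma$ viewed as an arc, match the two ends via a monodromy permutation $\pi\in S_N$, and read off the petals as the cycles of $\pi$, the key device being the $U_1$-convention of Remark~\ref{local decom} with $U_1$ chosen on the $W$-side of $\gamma$. The paper's proof is in fact briefer than yours: it performs the same cycle construction and then simply asserts that taking $U_1(z(t))\subset W$ forces the resulting closed curves to be simple and the decomposition to be unique, without spelling out why. Your proposal is more scrupulous in isolating simplicity as the crux and in sketching two routes (local normal form at each $\lambda_*\in\cal S\cap P^{-1}(\gamma)$, or perturbation to $\gamma_\epsilon$ disjoint from $P(\cal S)$) to establish it; your uniqueness discussion is likewise more explicit than the paper's one-line claim. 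So there is no genuinely different idea here---the architecture matches---but your write-up supplies justification at exactly the places the paper leaves to the reader. One small remark: your base-point shift to avoid $P(\cal S)$ is a convenience the paper does not invoke; it handles that case implicitly through the $U_1$-sector matching, as you note parenthetically.
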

\begin{proof}
	By Corollary \ref{band deco1}, we have a band decomposition $P^{-1}(\gamma)=\bigcup^N_{n=1}{\gamma_n}$, with $\gamma_n=\{\lambda_n(t)|t\in (\alpha, \beta ]\}$. 
	Because $z(\alpha)=z(\beta)$, we have $\lambda_{n}(\alpha),\lambda_{n}(\beta )\in P^{-1}(z(\alpha)),n=1,2,\cdots,N$.
	For any band $\gamma_{n_1}$, if $\lambda_{n_1}(\alpha)=\lambda_{n_1}(\beta )$, then this band is a closed curve thus a petal. Otherwise, there is another  band $\gamma_{n_2}$, with $\lambda_{n_2}(\alpha)=\lambda_{n_1}(\beta )$. If $\lambda_{n_1}(\alpha)=\lambda_{n_2}(\beta )$, then these two bands composed a closed curve. Otherwise, continue  this process, at most after  $s$ steps, $s\leq N$, we can get a closed curve $C_1$, which is composed of bands  $\gamma_{n_1},\gamma_{n_2},\cdots,\gamma_{n_s}$ end to end. If $P^{-1}(\gamma) \backslash C_1 \neq \emptyset$, similarly, we can get  a closed curve $C_2 \subset P^{-1}(\gamma) \backslash C_1 $. Repeat the above process, we can decompose $P^{-1}(\gamma)$ into closed curves $C_1,C_2,\cdots,C_k,k\leq N$.
	
	Let $W$ be the domain bounded by $\gamma$.  To ensure that these $k$ closed curves are all petals, we only need $U_1(z(t))$ in the Corollary \ref{band deco1} to be included in $W$ for all $t\in (\alpha, \beta ]$. 
	In other words, $U_1(z(t))$ is the left neighborhood along the increasing direction of $t$, for $\gamma$ rotates counterclockwise with respect to parameter. In this case, the band decomposition method is unique, and we have (1) and (2).
\end{proof}

It is obvious that $P^{-1}(\gamma)$ has a unique bouquet decomposition, where every bouquet is composed of a petal or some flowers.  By Proposition \ref{local1} and Corollary \ref{band deco2}, we have 
\begin{Corollary}\label{flower}
	$\lambda$ is a center of a flower if and only if $\lambda\in \cal{S}\cap  P^{-1}(\gamma)$. Hence the number of flowers  in $ P^{-1}(\gamma)$ is $\#( \cal{S}\cap  P^{-1}(\gamma))$. If $\lambda$ is a center of a flower, there are $\tau(\lambda)+1$ petals in this flower, and the small neighborhood of $\lambda$ is divided into $2\tau(\lambda)+2$  regions by these $\tau(\lambda)+1$ petals,  each region has an angle   $\frac{\pi}{\tau(\lambda)+1}$.
\end{Corollary}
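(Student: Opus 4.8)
The plan is to reduce the whole statement to one counting fact: for every $\lambda\in P^{-1}(\gamma)$, exactly $\tau(\lambda)+1$ of the petals in the decomposition of \corref{band deco2} pass through $\lambda$. Once this is in hand, a flower --- a union of at least two petals sharing a common point, that point being its center --- can occur only where $\tau(\lambda)+1\ge 2$, i.e.\ at $\lambda\in\cal{S}\cap P^{-1}(\gamma)$, and conversely every such $\lambda$ produces one; the number of petals in the flower and the local sector picture are then exactly what \propref{local1} already records. So I would proceed in three steps: (i) establish the counting fact; (ii) read off the ``if and only if'' and the count $\#(\cal{S}\cap P^{-1}(\gamma))$ of flowers; (iii) quote \propref{local1} for the local geometry at a center.

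For step (i), fix $\lambda\in P^{-1}(\gamma)$ and put $k=\tau(\lambda)+1$. Applying \propref{local1} to a short sub-arc of $\gamma$ through $z_0=P(\lambda)$ (invoking Remark~\ref{local1+} should $z_0$ be a corner of $\gamma$), one gets a neighbourhood of $\lambda$ in which $P^{-1}(\gamma)$ is exactly $2k$ smooth arc-germs issuing from $\lambda$, cutting that neighbourhood into $2k$ sectors, with consecutive germs meeting at angle $\pi/k$ when $z_0$ is a smooth point of $\gamma$. By \corref{band deco2} there is a (unique) petal decomposition of $P^{-1}(\gamma)$, so each of these $2k$ germs lies on exactly one petal; and since a petal is a simple closed curve, a petal passing through $\lambda$ does so at a single interior parameter and hence contributes precisely two germs, one incoming and one outgoing. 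Thus the $2k$ germs are matched in pairs among the petals through $\lambda$, and exactly $k=\tau(\lambda)+1$ petals pass through $\lambda$.

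Step (ii) is then immediate: $\lambda$ is a center $\iff$ at least two petals pass through it $\iff \tau(\lambda)\ge 1$, and any petal through $\lambda$ forces $\lambda\in P^{-1}(\gamma)$, so the centers are exactly the points of $\cal{S}\cap P^{-1}(\gamma)$; the flower at such a $\lambda$ is the union of its $\tau(\lambda)+1$ petals, and \propref{local1} splits a small neighbourhood of $\lambda$ into $2\tau(\lambda)+2$ equal sectors of angle $\pi/(\tau(\lambda)+1)$. Finally the assignment $\lambda\mapsto(\text{flower centered at }\lambda)$ is a surjection from $\cal{S}\cap P^{-1}(\gamma)$ onto the set of flowers; counting flowers by their centers (as the statement does), it is also injective, whence the number of flowers equals $\#(\cal{S}\cap P^{-1}(\gamma))$.

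The delicate point is the pairing argument in step (i): one must be sure the $2(\tau(\lambda)+1)$ local germs distribute among exactly $\tau(\lambda)+1$ distinct petals --- neither fewer, which would require a single petal to revisit $\lambda$, impossible for a simple closed curve, nor more --- and this is precisely where the simplicity of petals and the uniqueness in \corref{band deco2} are used. A lesser point is the injectivity in step (ii): one should confirm that a flower is unambiguously tied to a single center (equivalently, that distinct stationary points of $P^{-1}(\gamma)$ are not all common to one and the same family of petals), so that the count is well posed.
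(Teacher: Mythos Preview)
Your proposal is correct and is essentially the same approach as the paper's: the paper states the corollary as an immediate consequence of \propref{local1} and \corref{band deco2} without further detail, and what you have written is precisely the unpacking of that sentence --- the local $2(\tau(\lambda)+1)$-germ picture from \propref{local1}, the petal decomposition from \corref{band deco2}, and the two-germs-per-petal pairing that converts the local count into a petal count. Your caveats at the end are well placed but do not indicate gaps: simplicity of petals handles the first, and the paper's definition ties a flower to its center, so the second is a matter of convention already fixed.
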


To get more precise information about $P^{-1}(\gamma)$, we now figure out the number of petals and bouquets as well as  how each petal is composed of bands.

\begin{Lemma}\label{global0+}
	Let $\gamma$ be a piecewise smooth simple closed curve and  $C\subset P^{-1}(\gamma)$ be a petal. Then $P(C)=\gamma$ and $P(\mathring{D})=\mathring{W}$, where $W, D$ are the domains bounded by $\gamma, C$ respectively.  	
\end{Lemma}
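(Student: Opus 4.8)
I would deduce both equalities from a single argument‑principle (degree) count, using only the open mapping theorem and the Jordan curve theorem. Fix the positive (counterclockwise) orientation on $C=\partial D$; by Proposition~\ref{local1} the set $P^{-1}(\gamma)$, and hence $C$, is piecewise smooth with corners only at points of $\cal S$, so the contour integral below is well defined. For $w\notin P(C)$ put
\[
n(w):=\frac{1}{2\pi i}\int_{C}\frac{P'(\lambda)}{P(\lambda)-w}\,d\lambda ,
\]
which by the argument principle equals the number of solutions of $P(\lambda)=w$ lying in $\mathring D$, counted with multiplicity; in particular $n(w)\ge 0$. I will use the standard facts that $w\mapsto n(w)$ is locally constant on $\C\setminus P(C)$ and that $n(w)=0$ once $|w|>\max_{\overline D}|P|$, together with the fact that $P$, being a nonconstant polynomial, is an open map, so $P(\mathring D)$ is a nonempty open set while any arc or point has empty interior.

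\textbf{First, $P(C)=\gamma$.} The inclusion $P(C)\subseteq\gamma$ is immediate, and $P(C)$ is compact and connected. If $P(C)\neq\gamma$, then $P(C)$ is a proper subcontinuum of the Jordan curve $\gamma$, hence homeomorphic to a point or to a closed interval; in particular $\C\setminus P(C)$ is connected. Then $n$ is constant on $\C\setminus P(C)$, and since it vanishes near $\infty$ we get $n\equiv 0$, i.e. $P(\mathring D)\subseteq P(C)$. This is impossible, since $P(\mathring D)$ is open and nonempty whereas $P(C)$ has empty interior. Hence $P(C)=\gamma$.

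\textbf{Next, $P(\mathring D)=\mathring W$.} Write $\C\setminus\gamma=\mathring W\sqcup U_\infty$ with $U_\infty$ the unbounded component. On the connected set $U_\infty$, which contains a neighbourhood of $\infty$, we have $n\equiv 0$, so $P(\mathring D)\cap U_\infty=\emptyset$ and therefore $P(\mathring D)\subseteq\overline W$; since $P(\mathring D)$ is open and every point of $\gamma=\partial W$ has neighbourhoods meeting $U_\infty$, this forces $P(\mathring D)\subseteq\mathring W$. On the connected set $\mathring W$ we have $n\equiv n_0$ for some $n_0\ge 0$; if $n_0=0$ then $P(\mathring D)$ would also miss $\mathring W$, hence $P(\mathring D)\subseteq\gamma$, again contradicting openness of the nonempty set $P(\mathring D)$. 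Thus $n_0\ge 1$, which means every $w\in\mathring W$ has a preimage in $\mathring D$, i.e. $\mathring W\subseteq P(\mathring D)$. Together with the previous inclusion, $P(\mathring D)=\mathring W$.

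\textbf{Main obstacle.} The argument is essentially routine once it is set up; the points needing care are purely topological bookkeeping: that a proper subcontinuum of a Jordan curve is an arc or a point (so it has connected complement and empty interior), that $n(w)\ge 0$ depends on $C$ carrying the positive orientation, and the open‑mapping step upgrading $P(\mathring D)\subseteq\overline W$ to $P(\mathring D)\subseteq\mathring W$. I do not expect any substantial difficulty beyond these.
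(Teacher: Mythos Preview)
Your argument is correct, and it takes a genuinely different route from the paper. The paper proves $P(C)=\gamma$ by invoking the band decomposition (Corollary~\ref{band deco2}): since every petal is made up of bands, and each band maps onto $\gamma$, the equality is immediate. For $P(\mathring D)=\mathring W$ the paper uses a path--lifting argument: given a hypothetical $\lambda\in\mathring D$ with $P(\lambda)\in\C\setminus W$, draw an unbounded curve $L$ from $P(\lambda)$ avoiding $\gamma$ and lift it to a band through $\lambda$; this band is unbounded and must cross $C$, contradicting $L\cap\gamma=\emptyset$. The reverse inclusion is handled by connecting two points of $\mathring W$ by an arc and lifting it inside $D$.

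Your approach replaces both steps by the single degree function $n(w)$ and the open mapping theorem; you never touch the band decomposition. This is slightly more self-contained and, pleasantly, it introduces exactly the integral that the paper computes in the next lemma (Lemma~\ref{band number0}), so it dovetails with what follows. The paper's approach, on the other hand, stays entirely within the lifting/band framework that drives the rest of Section~2, and it gives a more geometric picture of why the inclusions hold. One small remark: you simply fix the positive orientation on $C$, which is fine for the argument principle; in the paper the fact that the petal's \emph{parametrized} orientation is counterclockwise is recorded as a consequence (Remark~\ref{global0++}) of this very lemma, so your choice is the natural one.
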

\begin{proof}
	By Proposition \ref{local1}, $P^{-1}(\gamma)= \partial  P^{-1}(W)$. By Corollary \ref{band deco2}, there exists a band in $C$, so $P(C)=\gamma$.
	
	For any $\lambda\in \mathring{D}$,  we have $P(\lambda)\notin \gamma$.
	If $P(\lambda)\notin \mathring{W}$, then $P(\lambda)\in \C\backslash W$.  Since $W$ is simply connected, we can start from $P(\lambda)$ and make a smooth curve $L$ that tends to infinity, such that $L\cap \gamma=\emptyset$ and thus $P^{-1}(L)\cap P^{-1}(\gamma)=\emptyset$.
	On the other hand, $L$ is unbounded, so 
	the band $L_n$ of $P^{-1}(L)$ containing $\lambda$ is also unbounded.
	While $C$ is bounded, so $L_n\cap C\neq \emptyset$ and $L\cap \gamma=P(L_n\cap C)\neq \emptyset$, which is a contradiction. Thus, $P(\mathring{D})\subset \mathring{W}$.
	
	Take $\lambda\in \mathring{D}$, then $z=P(\lambda)\in \mathring{W}$. For any $z_0\in \mathring{W}$, we can make a smooth curve $\hat{L} $ connecting points $z_0$ and $z$ in $\mathring{W}$, then $\hat{L} \cap \gamma=\emptyset$. So the band $\hat{L}_n$ of $P^{-1}(\hat{L})$ containing $\lambda$ is in $D$. Let $\lambda_0=\hat{L}_n\bigcap P^{-1}(z_0)$, then $\lambda_0\in \mathring{D}$ and $P(\lambda_0)=z_0$. Thus, $P(\mathring{D})= \mathring{W}$.
\end{proof}
\begin{Remark}\label{global0++}
	By Lemma \ref{global0+}, all petals are counterclockwise when the parameter increases.
\end{Remark}
\begin{Lemma}\label{global1}
	Let $\gamma_1,\gamma_2$ be piecewise smooth simple closed curves, and $W_1, W_2$ be the domains bounded by them respectively.  If  $W_1\subset W_2$, then $\cal{C}_P(\gamma_1)\geq\cal{C}_P(\gamma_2)$ and $\cal{B}_P(\gamma_1)\geq\cal{B}_P(\gamma_2).$
\end{Lemma}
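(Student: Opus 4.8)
The plan is to convert the two combinatorial quantities into component counts and then let the inclusion $W_1\subset W_2$ do the work. Write $W$ for the closed domain bounded by a piecewise smooth simple closed curve $\gamma$. I would first establish the two identities
$$\cal{B}_P(\gamma)=\#\{\text{components of }P^{-1}(W)\},\qquad \cal{C}_P(\gamma)=\#\{\text{components of }P^{-1}(\mathring W)\},$$
after which the lemma is short. Indeed $W_1\subset W_2$ gives $P^{-1}(W_1)\subset P^{-1}(W_2)$, and since an open subset of $W_2$ lies in $\mathring W_2$ we also get $\mathring W_1\subset\mathring W_2$, hence $P^{-1}(\mathring W_1)\subset P^{-1}(\mathring W_2)$. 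It then suffices to check that in each inclusion every component of the larger set meets the smaller one, so that the tautological ``is contained in'' map on components is onto; the two component counts then give $\cal{B}_P(\gamma_1)\ge\cal{B}_P(\gamma_2)$ and $\cal{C}_P(\gamma_1)\ge\cal{C}_P(\gamma_2)$.

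The surjectivity statements are the easy half and use only that $P\colon\C\to\C$ is proper. If $V$ is a component of $P^{-1}(\mathring W_2)$, then $V$ is clopen in the proper preimage $P^{-1}(\mathring W_2)$, so $P|_V\colon V\to\mathring W_2$ is proper, holomorphic and nonconstant, hence onto $\mathring W_2$; since $\emptyset\ne\mathring W_1\subset\mathring W_2=P(V)$, $V$ meets $P^{-1}(\mathring W_1)$. For the closed version: a component $K$ of $P^{-1}(W_2)$ has, by the petal decomposition \corref{band deco2}, a petal in $\partial K$, whose bounded domain $D$ satisfies $P(D)=\mathring W_2$ by \lemref{global0+} and lies in $K$; then $P(K)$ is compact and contains $\mathring W_2$, so $P(K)=W_2$, and $\emptyset\ne\mathring W_1\subset W_1\subset W_2=P(K)$ forces $K\cap P^{-1}(W_1)\ne\emptyset$.

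The real content is the two identities. For bouquets: $P^{-1}(\gamma)=\partial P^{-1}(W)$ by \propref{local1}, and $\C\setminus P^{-1}(W)$ is connected by \corref{global0}; from this every component $K$ of $P^{-1}(W)$ is cellular, because $\widehat\C\setminus K$ is the union of the connected set $\widehat\C\setminus P^{-1}(W)$ with the remaining components, each glued on along its nonempty boundary in $P^{-1}(\gamma)$. A cellular planar continuum has connected boundary, so the maximal connected subsets of $P^{-1}(\gamma)$, i.e.\ the bouquets, are exactly the boundaries $\partial K$; this is the first identity. For petals: a component $V$ of $P^{-1}(\mathring W)$ is open, connected, and by the same gluing argument has connected complement in $\widehat\C$, hence is simply connected. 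Analysing $\partial V\subset P^{-1}(\gamma)$ through the local description of \propref{local1} — near a stationary point $\lambda_0$ the set $P^{-1}(\mathring W)$ reduces to the $\tau(\lambda_0)+1$ disjoint ``inner'' sectors, and by Remark~\ref{local decom} each inner sector is the local picture of a unique band end/petal corner, so $V$ coincides near $\lambda_0$ with exactly one such sector — shows $\partial V$ is locally an arc everywhere, hence a Jordan curve, and that it concatenates bands precisely the way the petal decomposition does; so $\partial V$ is a petal and $V$ is the domain it bounds. Conversely, by \lemref{global0+} the domain of any petal lies in $P^{-1}(\mathring W)$ and is clopen there, hence is a component. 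This bijection is the second identity.

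The main obstacle is this last identification — matching the ``keep $\mathring W$ on the left'' arc-pairing that defines the petals in \corref{band deco1}/\corref{band deco2} with the arc-pairing forced on $\partial V$ by ``$V$ is an inner sector''; both are orientation-dictated and agree, but one has to spell this out (in particular to exclude a component whose boundary pinches at a stationary point). Everything else — properness giving surjectivity, \corref{global0} giving cellularity — is routine. As a shortcut one could instead prove the sharper formulas $\cal{B}_P(\gamma)=1+m_P(\C,\C\setminus W)$ and $\cal{C}_P(\gamma)=1+m_P(\C,\C\setminus\mathring W)$ (the content of \thmref{main1}(2),(5)), whereupon the lemma follows at once from $\C\setminus W_2\subset\C\setminus W_1$, $\C\setminus\mathring W_2\subset\C\setminus\mathring W_1$ and the monotonicity of $m_P(\C,\cdot)$ in its second slot; but those formulas are downstream of the present lemma, so I would keep to the component-count argument above.
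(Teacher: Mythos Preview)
Your argument is correct, but the paper's route is considerably shorter and avoids the component identities altogether. Rather than first proving
\[
\cal{B}_P(\gamma)=\#\{\text{components of }P^{-1}(W)\},\qquad \cal{C}_P(\gamma)=\#\{\text{components of }P^{-1}(\mathring W)\}
\]
and then deducing monotonicity from the inclusion of preimages, the paper works directly at the level of petals: for each petal $C_2\subset P^{-1}(\gamma_2)$ bounding the domain $D_2$, \lemref{global0+} gives $P(D_2)=W_2\supset\gamma_1$, hence $D_2\cap P^{-1}(\gamma_1)$ already surjects onto $\gamma_1$ and so contains a full petal of $P^{-1}(\gamma_1)$; since the interiors $\mathring D_2$ of distinct petals are pairwise disjoint, this immediately gives the injection from petals of $\gamma_2$ to petals of $\gamma_1$, and the bouquet case is identical.

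What you gain and lose: your component identities are genuine structural facts and clarify what $\cal C_P$ and $\cal B_P$ actually count, which is conceptually nice. But for this particular lemma they are overkill---you end up reconstructing the very same injection (each component of the larger preimage meets the smaller one) \emph{after} having paid the price of identifying components with petal domains, whereas the paper obtains that injection in two lines from \lemref{global0+} alone. In particular, the obstacle you correctly flag---matching $\partial V$ at stationary points with the ``keep $\mathring W$ on the left'' pairing of \corref{band deco2}---is real in your approach but simply never arises in the paper's, because the paper never leaves the petal language.
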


\begin{proof}
	For any petal $C_2$ of $P^{-1}(\gamma_2)$, let $D_2$ be the domain bounded  by $C_2$. By Lemma \ref{global0+}, $P(D_2)=W_2$, so $P(D_2\cap  P^{-1}(\gamma_1))=W_2\cap  \gamma_1=\gamma_1$. That is, there is a petal $C_1$ of $P^{-1}(\gamma_1)$ in $D_2$.  Thus, $	\cal{C}_P(\gamma_1)\geq\cal{C}_P(\gamma_2)$.
	
	Similarly, 	for any bouquet $\Gamma _2$ of $P^{-1}(\gamma_2)$, let $\Omega _2$ be the domain bounded  by $\Gamma_2$, 
	there is a bouquet $\Gamma _1$ of $P^{-1}(\gamma_1)$ in $\Omega_2$. Thus, $\cal{B}_P(\gamma_1)\geq\cal{B}_P(\gamma_2).$
\end{proof}

If $E\subset P^{-1}(\gamma)$ can be decomposed into $k$ bands, we define  $\mu(E)=k$ for simplicity.

\begin{Lemma}\label{band number0}  
	Let $\gamma$ be a piecewise smooth simple closed curve and $C\subset  P^{-1}(\gamma)$ be a petal.
	Then for any $\lambda_0\in \mathring{D}$,
	\begin{equation*}\label{bandnumber00}
		\mu(C)=\frac{1}{2\pi i}\int_{C} \frac{P'(\lambda)}{P(\lambda)-P(\lambda_0)}d\lambda,
	\end{equation*}	
	where $D$ is the domain bounded by $C$.  	
\end{Lemma}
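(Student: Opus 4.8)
The plan is to read the contour integral as a mapping degree via the argument principle, and then to evaluate that degree by a local analysis of $P$ near a regular value lying on $\gamma$, matching it against the number of bands in $C$. For the first step, fix $\lambda_0\in\mathring D$. By Lemma \ref{global0+} we have $P(\lambda_0)\in\mathring W$, so $P(\lambda_0)\notin\gamma=P(C)$ and the integrand has no pole on $C$; by Remark \ref{global0++} the petal $C$ is traversed counterclockwise, i.e.\ positively. Since $P$ is a polynomial, the argument principle gives
\[
\frac{1}{2\pi i}\int_{C}\frac{P'(\lambda)}{P(\lambda)-P(\lambda_0)}\,d\lambda = N\bigl(P(\lambda_0)\bigr),
\]
where for $z\in\C\setminus\gamma$ I write $N(z)=\frac{1}{2\pi i}\int_{C}\frac{P'(\lambda)}{P(\lambda)-z}\,d\lambda$ for the number of solutions of $P(\lambda)=z$ inside $C$, counted with multiplicity. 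The map $z\mapsto N(z)$ is integer-valued and continuous on $\C\setminus\gamma\supset\mathring W$, hence constant on the connected set $\mathring W$; call this value $d$. It then remains to prove $d=\mu(C)$.

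To evaluate $d$, I would pick a boundary point $z_1\in\gamma\setminus P(\cal S)$ (possible since $P(\cal S)$ is finite), so that $P^{-1}(z_1)=\{\mu_1,\dots,\mu_N\}$ consists of $N$ distinct non-critical points. By the implicit function theorem (Proposition \ref{local1} with $\tau=0$), for $\varepsilon$ small each ball $B(\mu_j,\varepsilon)$ is mapped biholomorphically by $P$ onto a neighborhood of $z_1$ and is split by the single smooth arc of $P^{-1}(\gamma)$ through $\mu_j$ into a ``$\mathring W$-side'' half and a ``$(\C\setminus\overline W)$-side'' half. If $\mu_j\in C$, then locally that arc is $C=\partial D$, its $D$-side lies in $\mathring D$ and maps into $\mathring W$ by Lemma \ref{global0+}, so the $\mathring W$-side half is exactly this $D$-side half and lies in $\mathring D$. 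If $\mu_j\notin C$, the $\mathring W$-side half is disjoint from $\mathring D$: otherwise, shrinking $\varepsilon$, one gets $\mu_j\in\overline{\mathring D}\cap P^{-1}(\gamma)=C$ (using $\mathring D\subset P^{-1}(\mathring W)$, which is disjoint from $P^{-1}(\gamma)$), a contradiction. Hence for $z\in\mathring W$ close enough to $z_1$ that $P^{-1}(z)$ has exactly one point in each $B(\mu_j,\varepsilon)$, that point lying in the $\mathring W$-side half, we obtain $P^{-1}(z)\cap\mathring D=\{\text{the preimage in }B(\mu_j,\varepsilon):\mu_j\in C\}$, whence $d=\#\bigl(P^{-1}(z_1)\cap C\bigr)$.

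The final step is the band count. By Corollary \ref{band deco2}, $C$ is composed of $\mu(C)$ bands laid end to end, and $P$ maps each band bijectively onto $\gamma$; hence each band contains exactly one point of $P^{-1}(z_1)$. Moreover two distinct bands cannot meet at a point over $z_1$, since a common point of two bands must lie in $\cal S$ (Proposition \ref{local1}) whereas $z_1\notin P(\cal S)$. Therefore $\#\bigl(P^{-1}(z_1)\cap C\bigr)=\mu(C)$, and combining with the two previous steps gives $\frac{1}{2\pi i}\int_{C}\frac{P'(\lambda)}{P(\lambda)-P(\lambda_0)}\,d\lambda=d=\mu(C)$ for every $\lambda_0\in\mathring D$.

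The routine part is the argument-principle reduction. The crux, and the step I expect to require the most care, is identifying near a regular value of $P$ on the \emph{boundary} curve $\gamma$ precisely which local sheets of $P$ over $\mathring W$ lie inside $\mathring D$: a local preimage contributes to $\mathring D$ exactly when the corresponding point of $P^{-1}(z_1)$ lies on $C$ itself, and this rests on $P^{-1}(\gamma)=\partial P^{-1}(W)$ together with $P(\mathring D)=\mathring W$. Once this is pinned down, the petal-into-bands decomposition of Corollary \ref{band deco2} turns the count into $\mu(C)$.
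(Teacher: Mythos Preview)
Your proof is correct, but it takes a longer road than the paper's. The paper observes that since $C$ consists of $\mu(C)$ bands laid end to end, each mapped bijectively onto $\gamma$ with the correct orientation (Remark \ref{global0++}), the substitution $z=P(\lambda)$ carries the contour integral over $C$ to $\mu(C)$ copies of the integral over $\gamma$:
\[
\frac{1}{2\pi i}\int_{C}\frac{P'(\lambda)}{P(\lambda)-P(\lambda_0)}\,d\lambda
=\frac{\mu(C)}{2\pi i}\int_{\gamma}\frac{dz}{z-P(\lambda_0)}=\mu(C),
\]
the last equality because $P(\lambda_0)\in\mathring W$. That is the whole argument.

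Your route instead reads the left side as the degree $d$ of $P|_D$ over $\mathring W$, then evaluates $d$ by a local sheet count near a regular value $z_1\in\gamma$, and finally matches the number of sheets hitting $C$ with the number of bands. All of this is sound; in particular, your justification that $\overline{\mathring D}\cap P^{-1}(\gamma)=C$ (via $P(\mathring D)=\mathring W$ and $\mathring W\cap\gamma=\emptyset$) is exactly what is needed to rule out stray preimages inside $\mathring D$. The trade-off is that you reprove by hand what the change of variables gives for free: the pushforward of the integral along $P$ is $\mu(C)$ times the winding number of $\gamma$ about $P(\lambda_0)$. The paper's approach is shorter and avoids the boundary local analysis entirely; your approach has the minor advantage of making explicit that the integral counts preimages in $\mathring D$, which you could equally obtain from the argument principle after the substitution.
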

\begin{proof}
	For any $\lambda_0\in \mathring{D}$, by Lemma \ref{global0+}, we have $P(\lambda_0)\in \mathring{W}$. By Corollar \ref{band deco2},
	$C$ is composed of $\mu(C)$ bands end to end.  By Remark \ref{global0++}, $C$ is counterclockwise, so the number of zeros of  $P(\lambda)-P(\lambda_0)$ in $ \mathring{D}$ is
	\begin{equation*}
		\frac{1}{2\pi i}\int_{C} \frac{P'(\lambda)}{P(\lambda)-P(\lambda_0)}d\lambda=	\frac{\mu(C)}{2\pi i}\int_{\gamma} \frac{1}{z-P(\lambda_0)}dz=\mu(C).    
	\end{equation*}

\end{proof}

Take any $z_0\in \gamma$. $U$ is the neighborhood of $z_0$ in Proposition \ref{local1}, which is divided into $U_1,U_2$ by curve $\gamma$, $U_1\subset W$.
Let $W_-,W_+$ be the closure of $W\backslash   U$ and $W\cup U$ respectively, $\gamma_{\pm }=\partial W_{\pm }$, then $\gamma_{\pm }$ are piecewise smooth simple closed curves. We call  $\gamma_+$($\gamma_-$) a local expansion(contraction) surgery of $\gamma$ with $U$ at $z_0$ .

\begin{Lemma}\label{local2}Let $\gamma$ be a piecewise smooth simple closed curve. Then 
	
	{\rm(1)} $	\cal{C}_P(\gamma_+)+m_P(\C,\{z_0\})
	=\cal{C}_P(\gamma)=\cal{C}_P(\gamma_-)\ \ {and }\ \ \cal{B}_P(\gamma_+)=\cal{B}_P(\gamma)=\cal{B}_P(\gamma_-)-m_P(\C,\{z_0\}).$	
	
	{\rm(2)}	Let $C\subset  P^{-1}(\gamma)$ be a petal and $D$ be the domain bounded by $C$.
	Then there is a unique petal $C_-$ of $P^{-1}(\gamma_-)$ in $D$, with $\mu(C)=\mu(C_-).$
	
	{\rm(3)}	Let $C_+\subset  P^{-1}(\gamma_+)$ be a petal, $l=1+m(\mathring{D}_+,\{z_0\})$, where
	$D_+$ is the domain bounded  by $C_+$. 
	Then there are $l$ petals $C_1,C_2,\cdots,C_l$ of $P^{-1}(\gamma)$ in  $D_+$ merged into $C_+$, with $\mu(C_+)=\sum_{j=1}^l\mu(C_j).$
\end{Lemma}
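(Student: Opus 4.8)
The plan is to run a local analysis at the finitely many points of $P^{-1}(z_0)$ and then globalise it, using that the surgery changes $\gamma$ only inside $U$. Shrinking $U$ if necessary, I may assume $\overline U\cap P(\cal{S})\subseteq\{z_0\}$ and that $P^{-1}(U)$ is a disjoint union of domains $V(\lambda_0)$, one around each $\lambda_0\in P^{-1}(z_0)$, with $P(V(\lambda_0))\Subset U$; then $P^{-1}(\gamma)$, $P^{-1}(\gamma_+)$ and $P^{-1}(\gamma_-)$ coincide outside $\bigcup V(\lambda_0)$, while inside each $V(\lambda_0)$ they are completely described by Proposition \ref{local1} together with the normal form $P(\lambda)-z_0=\frac{P^{(\tau_0+1)}(\lambda_0)}{(\tau_0+1)!}(\lambda-\lambda_0)^{\tau_0+1}(1+o(1))$, $\tau_0=\tau(\lambda_0)$. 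I would prove $(3)$ and $(2)$ in this way and then deduce $(1)$ by summation.

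For $(3)$: given a petal $C_+\subset P^{-1}(\gamma_+)$ with domain $D_+$, the argument of Lemma \ref{global0+} applied to $\gamma_+$ makes $P\colon\overline{D_+}\to\overline{W_+}$ a branched covering of degree $\mu(C_+)$, and $z_0\in\mathring{W_+}$. The arc $\gamma\cap\overline U$ is a crosscut of $\overline{W_+}$ splitting $\mathring{W_+}$ into $\mathring W$ and the small "bump" disk $D'=\mathring{W_+}\setminus W\subseteq U$; pulling it back cuts $\overline{D_+}$ into Jordan subdomains, each mapped by $P$ onto $\mathring W$ or onto $D'$. The closures of those over $\mathring W$ are precisely the petal regions $\overline{D_1},\dots,\overline{D_l}$ of $P^{-1}(\gamma)$ lying in $D_+$, i.e. the petals merging into $C_+$. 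To count $l$: outside $\bigcup V(\lambda_0)$ nothing is cut; a non-critical preimage of $z_0$ merely dents a region; and at a critical preimage $\lambda_0\in\mathring{D_+}$ the $2(\tau(\lambda_0)+1)$ local arcs of $P^{-1}(\gamma)$ chop one region into $\tau(\lambda_0)+1$ lobes; hence $l=1+m_P(\mathring{D_+},\{z_0\})$. Finally $\mu(C_+)=\sum_j\mu(C_j)$: by Lemma \ref{band number0}, $\mu(C_+)=\#\!\bigl(P^{-1}(w_0)\cap\mathring{D_+}\bigr)$ for $w_0\in\mathring W$, and since $w_0\notin D'$ all these preimages lie in $\bigsqcup_j\mathring{D_j}$, so the count equals $\sum_j\#\!\bigl(P^{-1}(w_0)\cap\mathring{D_j}\bigr)=\sum_j\mu(C_j)$.

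For $(2)$: given a petal $C\subset P^{-1}(\gamma)$ with domain $D$ (so $P\colon\overline D\to\overline W$ is a branched covering of degree $\mu(C)$ and $P^{-1}(\gamma)\cap\mathring D=\emptyset$ by Lemma \ref{global0+}), note $z_0\notin W_-$ and $\mathring{W_-}=\mathring W\setminus\overline{U_1}$, with the removed lens $\overline{U_1}\subseteq U$ carrying no point of $P(\cal{S})$ but $z_0$. Running the previous picture in reverse shows $P^{-1}(\overline{U_1})\cap\mathring D$ is a disjoint union of "collar" pieces attached to $C$ along arcs over $\gamma\cap\overline U$; removing them from the disk $\mathring D$ leaves $P^{-1}(\mathring{W_-})\cap\mathring D$ connected, so the boundary of its closure is a single petal $C_-\subseteq D$ of $P^{-1}(\gamma_-)$. (Equivalently: at each critical preimage of $z_0$ on $C$ the flower of $P^{-1}(\gamma)$ simply "opens", its arcs retracting off $\lambda_0$ without changing which region of $\mathring D$ they border; uniqueness may also be read off afterwards from $\cal{C}_P(\gamma_-)\ge\cal{C}_P(\gamma)$, Lemma \ref{global1}, together with $(1)$.) Since $w_0\notin\overline{U_1}$ for $w_0\in\mathring{W_-}$, one has $P^{-1}(w_0)\cap\mathring D=P^{-1}(w_0)\cap\mathring{D_-}$, whence $\mu(C)=\mu(C_-)$ by Lemma \ref{band number0}.

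Finally $(1)$: since $z_0\in\mathring{W_+}$, each $\lambda_0\in P^{-1}(z_0)$ lies in a unique petal region $\mathring{D_+}$ (as $P^{-1}(W_+)$ is the disjoint union of the $\overline{D_+}$), so summing $(3)$ over the petals of $P^{-1}(\gamma_+)$ gives $\cal{C}_P(\gamma)=\cal{C}_P(\gamma_+)+m_P(\C,\{z_0\})$; and since $W_-\subseteq W$, every petal of $P^{-1}(\gamma_-)$ lies in a unique $\mathring D$, so summing $(2)$ gives $\cal{C}_P(\gamma)=\cal{C}_P(\gamma_-)$. For the bouquet counts, locality again confines all changes to the points of $P^{-1}(z_0)$: under expansion, fusing a flower into an interior point keeps its bouquet connected and, because the added bump preimage touches $\C\setminus P^{-1}(W)$ only near $P^{-1}(z_0)$, never joins two distinct bouquets, so $\cal{B}_P(\gamma_+)=\cal{B}_P(\gamma)$; under contraction, opening the flower at a critical $\lambda_0$ of order $\tau(\lambda_0)$ splits its bouquet into $\tau(\lambda_0)+1$ pieces which, $\gamma$ and $\gamma_-$ agreeing off $U$, cannot re-merge, so $\cal{B}_P(\gamma_-)=\cal{B}_P(\gamma)+m_P(\C,\{z_0\})$. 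The hardest step is exactly this globalisation — proving cleanly that the three inverse images differ only inside $\bigcup V(\lambda_0)$, that $P^{-1}(\mathring{W_-})\cap\mathring D$ stays connected so that $(2)$ truly yields one petal, and that the flower openings/fusions at the $\lambda_0$'s account for every change in the petal, band and bouquet combinatorics with no hidden rearrangement elsewhere; technically this is a careful cut-and-paste of $P^{-1}$ along the crosscut $\gamma\cap\overline U$, resting on Proposition \ref{local1} and Lemma \ref{band number0}.
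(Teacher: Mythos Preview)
Your approach is correct and rests on the same two pillars as the paper --- the local flower picture of Proposition~\ref{local1} and the winding/point-count formula of Lemma~\ref{band number0} --- but you run the logic in the opposite order. The paper proves (1) \emph{first}, reading off directly from Proposition~\ref{local1} that at each $\lambda\in P^{-1}(z_0)$ an expansion fuses $\tau(\lambda)+1$ petals into one (losing $\tau(\lambda)$ petals, keeping the bouquet) while a contraction opens the flower (keeping petals, adding $\tau(\lambda)$ bouquets), and then sums over $P^{-1}(z_0)$. With (1) in hand, (2) becomes almost free: existence of \emph{some} $C_-\subset D$ is Lemma~\ref{global1}, and uniqueness follows from the equality $\cal{C}_P(\gamma)=\cal{C}_P(\gamma_-)$ just proved; the identity $\mu(C)=\mu(C_-)$ is a two-line contour calculation with Lemma~\ref{band number0} (your point-count $\#\bigl(P^{-1}(w_0)\cap\mathring D\bigr)=\#\bigl(P^{-1}(w_0)\cap\mathring D_-\bigr)$ is the same computation in disguise). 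By contrast, you attack (2) before (1) and therefore need the direct connectivity claim that $P^{-1}(\mathring W_-)\cap\mathring D$ is connected; this is true, but as you yourself flag it is the most delicate step, and your parenthetical fallback to ``(1) plus Lemma~\ref{global1}'' is circular in your ordering. The paper's ordering simply removes that difficulty. For (3) and the $\mu$-additivity both arguments coincide once one identifies the integral of Lemma~\ref{band number0} with a preimage count.
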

\begin{proof} {\rm(1)} 
	At any $\lambda\in P^{-1}(z_0)$, by Proposition \ref{local1}, through one-step local expansion,  $\tau(\lambda)$ petals  are reduced, and the number of bouquets  remains unchanged. So $\cal{B}(P,\gamma_+)=\cal{B}(P,\gamma)$, and 
	$$\cal{C}_P(\gamma)-\cal{C}_P(\gamma_+)=\sum_{\lambda \in   P^{-1}(z_0)} \tau(\lambda)=m_P(\C,\{z_0\}).$$
	Similarly, at any $\lambda\in P^{-1}(z_0)$, through one-step local contraction,  $\tau(\lambda)$  bouquets are added , and  the number of petals remains unchanged. So $\cal{C}_P(\gamma)=\cal{C}_P(\gamma_-)$, and 
	$\cal{B}_P(\gamma_-)-\cal{B}_P(\gamma)=m_P(\C,\{z_0\}).$ 
	
	{\rm(2)}  For $W_-\subset W$, by Lemma \ref{global1}, there exists a petal $C_-\subset  P^{-1}(\gamma_-)$ in $D$. Because $\cal{C}_P(\gamma)=\cal{C}_P(\gamma_-)$, so the existence of petal $C_-$ is unique. Take any $\lambda_0\in \mathring{D}_-$, then $P(\lambda_0)\notin U_1(z_0)$, by Lemma \ref{band number0}, we have 
	\begin{align*}
		\mu(C_-)&=\frac{1}{2\pi i}\int_{C-} \frac{P'(\lambda)}{P(\lambda)-P(\lambda_0)}d\lambda\\
		&=\frac{1}{2\pi i}\int_{C} \frac{P'(\lambda)}{P(\lambda)-P(\lambda_0)}d\lambda
		-\frac{1}{2\pi i}\int_{D\cap P^{-1}(\partial U_1(z_0))} \frac{P'(\lambda)}{P(\lambda)-P(\lambda_0)}d\lambda\\
		&=\mu(C).
	\end{align*}

	{\rm(3)} 
	At any $\lambda\in \mathring{D}_+\cap P^{-1}(z_0)$, by expansion sergery, $\tau(\lambda)$ petals  are reduced. So there are $l=1+m(\mathring{D}_+,\{z_0\})$  petals of $P^{-1}(\gamma)$ in $D_+$. Take any $\lambda_0\in \mathring{D}$, then $P(\lambda_0)\notin U_2(z_0)$, by Lemma \ref{band number0}, we have
	\begin{align*}
		\mu(C_+)&=\frac{1}{2\pi i}\int_{C_+} \frac{P'(\lambda)}{P(\lambda)-P(\lambda_0)}d\lambda\\
		&=\frac{1}{2\pi i}\int_{C_{1}} \frac{P'(\lambda)}{P(\lambda)-P(\lambda_0)}d\lambda+\cdots+\frac{1}{2\pi i}\int_{C_{l}} \frac{P'(\lambda)}{P(\lambda)-P(\lambda_0)}d\lambda\\
		&+\frac{1}{2\pi i}\int_{D_+\cap P^{-1}(\partial U_2(z_0))} \frac{P'(\lambda)}{P(\lambda)-P(\lambda_0)}d\lambda\\
		&=\sum_{j=1}^l\mu(C_j).  
	\end{align*}
\end{proof}
\begin{Remark}
	If $z_0\notin P(\cal{S})$, i.e.,  $m(\C,\{z_0\})=0$, then for any petal $C_+\subset  P^{-1}(\gamma_+)$ ,	$m(\mathring{D}_+,\{z_0\})=0$.
	Hence there is a unique petal $C$ of $P^{-1}(\gamma)$ in $D_+$, with $\mu(C_-)=\mu(C)=\mu(C_+).$
\end{Remark}

\begin{Lemma}\label{global2}
	Let $\gamma_1,\gamma_2$ be piecewise smooth simple closed curves, and $W_1, W_2$ be the domains bounded by them respectively. If $W_1\subset \mathring{W}_2$ and $P(\cal{S})\cap W_2 \backslash \mathring{W}_1=\emptyset$, then
	\begin{equation}\label{p2.3}
		\cal{C}_P(\gamma_1)=\cal{C}_P(\gamma_2)\ \ {and }\ \ \cal{B}_P(\gamma_1)=\cal{B}_P(\gamma_2).
	\end{equation}
	Moreover, let $C_2\subset  P^{-1}(\gamma_2)$ be a petal and $D_2$ be the domain bounded by $C_2$,
	then  there exists a unique petal $C_1\subset  P^{-1}(\gamma_1)$ in $D_2$, with $\mu(C_1)=\mu(C_2)$.
\end{Lemma}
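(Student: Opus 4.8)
The plan is to analyse the covering that $P$ induces over the closed region between $\gamma_1$ and $\gamma_2$. Put $A:=W_2\setminus\mathring{W}_1$. Because $W_1$ is closed and $W_1\subset\mathring{W}_2$, the set $A$ is a compact topological annulus with $\partial A=\gamma_1\cup\gamma_2$, and $\gamma_1,\gamma_2\subset A$; hence the hypothesis $P(\cal{S})\cap A=\emptyset$ also gives $P(\cal{S})\cap\gamma_i=\emptyset$ for $i=1,2$, so $P'$ has no zero on $P^{-1}(A)$. Since $P$ is proper, $P\colon P^{-1}(A)\to A$ is then a proper local homeomorphism, i.e. a finite-sheeted covering map, of degree $N$. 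As every connected finite-sheeted cover of an annulus is again an annulus, $P^{-1}(A)$ is a disjoint union of compact annuli $V_1,\dots,V_r$, where $P\colon V_i\to A$ is a covering of degree $d_i\ge 1$, $\sum_{i=1}^{r}d_i=N$, and the two boundary circles of $V_i$ are $\partial_{\gamma_2}V_i:=V_i\cap P^{-1}(\gamma_2)$ and $\partial_{\gamma_1}V_i:=V_i\cap P^{-1}(\gamma_1)$, each a connected $d_i$-fold cover of the corresponding base circle.

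The equalities \eqref{p2.3} follow immediately. We have $P^{-1}(\gamma_j)=\bigsqcup_{i=1}^{r}\partial_{\gamma_j}V_i$ for $j=1,2$, so $P^{-1}(\gamma_1)$ and $P^{-1}(\gamma_2)$ each have exactly $r$ connected components; and since $P(\cal{S})\cap\gamma_j=\emptyset$, Corollary~\ref{flower} shows $P^{-1}(\gamma_j)$ has no flower, so every component is a single petal (and a single bouquet). Hence $\cal{C}_P(\gamma_1)=\cal{B}_P(\gamma_1)=r=\cal{C}_P(\gamma_2)=\cal{B}_P(\gamma_2)$.

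For the ``moreover'' part, fix a petal $C_2\subset P^{-1}(\gamma_2)$, so $C_2=\partial_{\gamma_2}V_{i_0}$ for a unique $i_0$. By Lemma~\ref{global0+} together with the local behaviour of $P$ near the critical-value-free curve $C_2$ (Proposition~\ref{local1}), the germ of $P^{-1}(A)$ along $C_2$ lies on the $D_2$-side, so $V_{i_0}\subset D_2$; in particular $C_1:=\partial_{\gamma_1}V_{i_0}$ is a petal of $P^{-1}(\gamma_1)$ contained in $\mathring{D}_2$, and I let $D_1$ denote the domain it bounds. For uniqueness, any petal of $P^{-1}(\gamma_1)$ is disjoint from $C_2$ (as $\gamma_1\cap\gamma_2=\emptyset$), hence lies entirely inside $\mathring{D}_2$ or entirely outside $D_2$; the petals other than $C_1$ sit inside the $V_i$ with $i\ne i_0$, which are disjoint from $V_{i_0}$, while $\mathring{D}_2=\mathring{V}_{i_0}\sqcup C_1\sqcup\mathring{D}_1$ with $P(\mathring{D}_1)=\mathring{W}_1$ by Lemma~\ref{global0+} and $\mathring{W}_1\cap A=\emptyset$, so $\mathring{D}_1$ contains no petal of $P^{-1}(\gamma_1)$. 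Thus $C_1$ is the only one in $\mathring{D}_2$. Finally $\mu(C_2)$ equals the degree of the covering $C_2\to\gamma_2$, namely $d_{i_0}$, and likewise $\mu(C_1)=d_{i_0}$; equivalently, choosing $\lambda_0\in\mathring{D}_1\subset\mathring{D}_2$ gives $P(\lambda_0)\in\mathring{W}_1$, which is disjoint from $P(\mathring{V}_{i_0})=\mathring{A}$, so $P-P(\lambda_0)$ has no zero in the region $\mathring{V}_{i_0}$ lying between $C_1$ and $C_2$, and Lemma~\ref{band number0} gives $\mu(C_2)-\mu(C_1)=\frac{1}{2\pi i}\int_{C_2}\frac{P'(\lambda)}{P(\lambda)-P(\lambda_0)}\,d\lambda-\frac{1}{2\pi i}\int_{C_1}\frac{P'(\lambda)}{P(\lambda)-P(\lambda_0)}\,d\lambda=0$.

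I expect the first, topological step to be the main obstacle: one must justify cleanly that $P^{-1}(A)$ decomposes into annuli each covering $A$, that the boundary circles of these annuli are exactly the petals of $P^{-1}(\gamma_1)$ and of $P^{-1}(\gamma_2)$, and that each petal of $P^{-1}(\gamma_2)$ pairs with a unique petal of $P^{-1}(\gamma_1)$ as the two boundary circles of a common annulus, with the same covering degree. An alternative staying strictly within the tools already built is to connect $\gamma_2$ to $\gamma_1$ by a finite chain of local contraction surgeries performed at points lying outside $P(\cal{S})$ and inside $A$ --- possible because $A$ is compact and $P(\cal{S})$ finite, after discretising a radial homotopy of the annulus $A$ --- and then apply the Remark following Lemma~\ref{local2}, which says each such surgery preserves $\cal{C}_P$, $\cal{B}_P$ and the $\mu$-value of every petal and matches petals one-to-one; the delicate point in that route is arranging the chain to terminate exactly at $\gamma_1$.
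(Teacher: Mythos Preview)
Your covering-space argument is correct and genuinely different from the paper's proof. The paper stays entirely within the surgery toolkit: since $P(\cal{S})\cap(W_2\setminus\mathring{W}_1)=\emptyset$, it performs finitely many local \emph{expansions} of $\gamma_1$ at points outside $P(\cal{S})$ to obtain a curve $\gamma_3$ with $W_2\subset W_3$; Lemma~\ref{local2} gives $\cal{C}_P(\gamma_1)=\cal{C}_P(\gamma_3)$ and $\cal{B}_P(\gamma_1)=\cal{B}_P(\gamma_3)$, and then the monotonicity of Lemma~\ref{global1} applied to $W_1\subset W_2\subset W_3$ sandwiches $\cal{C}_P(\gamma_2)$ and $\cal{B}_P(\gamma_2)$ between equal quantities. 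This overshoot-and-sandwich trick is precisely what dissolves your worry about ``arranging the chain to terminate exactly at $\gamma_1$''. For $\mu(C_1)=\mu(C_2)$ the paper, like your second computation, picks $z_0\in\mathring{W}_1$ and observes $z_0\notin P(D_2\setminus\mathring{D}_1)$, so the two argument-principle integrals from Lemma~\ref{band number0} agree.

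Your route is more conceptual and yields the extra information $\cal{C}_P(\gamma_j)=\cal{B}_P(\gamma_j)$ for free (since $P(\cal{S})\cap\gamma_j=\emptyset$ forces petals to coincide with bouquets), at the cost of importing the classification of finite covers of an annulus and checking carefully that a proper local homeomorphism between surfaces with boundary is a covering. The paper's route is more elementary and self-contained, reusing only Lemmas~\ref{global1} and~\ref{local2}; it also illustrates the surgery technique that drives the later inductive proof of Lemma~\ref{global3}.
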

\begin{proof}
	Because $P(\cal{S})\cap W_2 \backslash \mathring{W_1}=\emptyset$, 
	that is $m_P(\C,W_2)=m_P(\C,\mathring{W}_1)$, 
	so $m_P(\C,\{z\})=0$, for all $z\in W_2 \backslash \mathring{W_1}$. We can  make finite steps local expansion of $\gamma_1$ and get $\gamma_3$, such that $W_2\subset W_3$. 
	By Lemma \ref{local2}, $\cal{C}_P(\gamma_1)=\cal{C}_P(\gamma_3)$ and $\cal{B}_P(\gamma_1)=\cal{B}_P(\gamma_3)$.
	On the other hand, for $W_1\subset W_2\subset W_3$, by Lemma \ref{global1}, we have  $\cal{C}_P(\gamma_1)\geq\cal{C}_P(\gamma_2)\geq\cal{C}_P(\gamma_3)$ and $\cal{B}_P(\gamma_1)\geq\cal{B}_P(\gamma_2)\geq\cal{B}_P(\gamma_3)$. Thus, (\ref{p2.3}) holds.
	
	By Lemma \ref{global1}, there is a petal $C_1\subset  P^{-1}(\gamma_1)$ in $D_2$, because $\cal{C}_P(\gamma_1)=\cal{C}_P(\gamma_2)$, so the existence of petal $C_1$ is unique. Take any $z_0\in \mathring{W}_1 $, then $z_0\notin P(D_2 \backslash \mathring{D}_1)$, by Lemma \ref{band number0}, we have 
	\begin{align*}
		\mu(C_1)&=\frac{1}{2\pi i}\int_{C_1} \frac{P'(\lambda)}{P(\lambda)-z_0}d\lambda=\frac{1}{2\pi i}\int_{C_2} \frac{P'(\lambda)}{P(\lambda)-z_0}d\lambda=\mu(C_2).  
	\end{align*}
\end{proof}

\begin{Lemma}\label{global3}
	Let $\gamma$ be a piecewise smooth simple closed curve, and $W$ be the  domain bounded by $\gamma$. If $P(\cal{S})\cap \gamma=\emptyset$, then
	\begin{equation}\label{CBnumber0}
		\cal{C}_P(\gamma)= \cal{B}_P(\gamma)=N-m_P(\C,\mathring{W}).
	\end{equation}	
	Moreover, let $C$ be a petal in  $P^{-1}(\gamma)$, and $D$ be the  domain bounded by $C$,
	then 
	\begin{equation}\label{bandnumber0}
		\mu(C)=1+m_P( \mathring{D},\C).
	\end{equation}	
\end{Lemma}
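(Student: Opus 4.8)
The plan is to prove the two equalities in~(\ref{CBnumber0}) together with~(\ref{bandnumber0}) by reducing $\gamma$ to a tiny circle and then enlarging it back to $\gamma$ through a chain of local expansion surgeries, bookkeeping $\cal{C}_P$, $\cal{B}_P$ and the per-petal band numbers $\mu$ along the way. First, since $P(\cal{S})\cap\gamma=\emptyset$ we have $\cal{S}\cap P^{-1}(\gamma)=\emptyset$, so by Corollary~\ref{flower} there is no flower in $P^{-1}(\gamma)$, hence every bouquet is a single petal and $\cal{C}_P(\gamma)=\cal{B}_P(\gamma)$. Moreover every point of $P^{-1}(\gamma)$ is then a regular point of $P$, so by Proposition~\ref{local1} (case $\tau=0$) together with Corollary~\ref{global0} the set $P^{-1}(\gamma)$ is a finite disjoint union of smooth Jordan curves $C_1,\dots,C_k$, $k=\cal{C}_P(\gamma)$, the petals; let $D_j$ be the disk bounded by $C_j$. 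By Lemma~\ref{global0+}, $P(\mathring D_j)=\mathring W$, so $D_j\subset P^{-1}(W)$; using that $\C\setminus P^{-1}(W)$ is connected and unbounded (Corollary~\ref{global0}) one checks that the $C_j$ are pairwise non-nested and that $P^{-1}(W)=\bigsqcup_j D_j$, so the $\mathring D_j$ are pairwise disjoint. Because $P(\cal{S})\cap\gamma=\emptyset$, a stationary point with value in $\mathring W$ lies in exactly one $\mathring D_j$ and a stationary point with value outside $W$ lies outside all $D_j$; hence $\sum_j m_P(\mathring D_j,\C)=m_P(\C,\mathring W)$.

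For the counts, fix $z_\ast\in\mathring W\setminus P(\cal{S})$ and a small round disk $W_0\ni z_\ast$ with $\overline{W_0}\subset\mathring W\setminus P(\cal{S})$. As $W_0$ carries no critical value, $P^{-1}(W_0)$ is a disjoint union of $N$ topological disks each mapped biholomorphically onto $W_0$, so $\cal{C}_P(\partial W_0)=N$ and each of its petals has $\mu=1$ and encloses no stationary point. Choose a piecewise smooth Jordan curve $\gamma_3=\partial W_3$ with $W\subset\mathring W_3$, $P(\cal{S})\cap(W_3\setminus\mathring W)=\emptyset$ and $P(\cal{S})\cap\gamma_3=\emptyset$ (a slightly outer parallel curve of $\gamma$ will do), so that $\mathring W_3\cap P(\cal{S})=\mathring W\cap P(\cal{S})$. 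Enlarge $\partial W_0$ to $\gamma_3$ through finitely many local expansion surgeries, arranged so that the finitely many points of $\mathring W\cap P(\cal{S})$ are swallowed one at a time. By Lemma~\ref{local2}(1) a passage through a point $z$ lowers $\cal{C}_P$ by $m_P(\C,\{z\})$ and keeps $\cal{B}_P$ fixed, while passages through regular boundary points change nothing; summing gives $\cal{C}_P(\gamma_3)=N-\sum_{z\in\mathring W\cap P(\cal{S})}m_P(\C,\{z\})=N-m_P(\C,\mathring W)$. Since $W\subset\mathring W_3$ and $P(\cal{S})\cap(W_3\setminus\mathring W)=\emptyset$, Lemma~\ref{global2} yields $\cal{C}_P(\gamma)=\cal{C}_P(\gamma_3)$, and with the first paragraph this is~(\ref{CBnumber0}).

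For the band count in a single petal, fix a petal $C=C_j$ with disk $D=D_j$. Since $P(C)=\gamma=\partial W$, the restriction $P\colon\mathring D\to\mathring W$ is a proper nonconstant holomorphic map, hence a branched covering of some finite degree $d$, and by Lemma~\ref{band number0} that degree is $d=\mu(C)$. Riemann--Hurwitz for a degree-$d$ branched covering of topological disks gives $1=\chi(\mathring D)=d\,\chi(\mathring W)-\sum_{p\in\mathring D}\tau(p)=\mu(C)-m_P(\mathring D,\C)$, which is~(\ref{bandnumber0}); summing over $j$ and using $\sum_j\mu(C_j)=N$ together with $\sum_j m_P(\mathring D_j,\C)=m_P(\C,\mathring W)$ re-derives~(\ref{CBnumber0}) as a consistency check. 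If one prefers to stay inside the paper's toolbox, one may instead track $\mu$ along the enlargement of the second paragraph: the quantity $\mu(C)-1-m_P(\mathring D,\C)$ vanishes for the petals of $\partial W_0$, is preserved between consecutive surgeries by Lemma~\ref{global2}, and is additive over the petals merged at each surgery by Lemma~\ref{local2}(3), so it stays $0$.

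The main obstacle is the topological statement used in the second paragraph, namely that a small disk $W_0$ can be enlarged to $W_3$ by finitely many of the local expansion surgeries of Proposition~\ref{local1} with the critical values inside swallowed one at a time; this is geometrically evident and is already used implicitly in the proof of Lemma~\ref{global2}, but it should be written out carefully (a general-position choice of the surgery centres and of the intermediate curves). Everything else is a direct application of the lemmas already established, plus---if the Riemann--Hurwitz route is taken for~(\ref{bandnumber0})---the standard fact that a proper holomorphic map between topological disks is a branched covering whose degree exceeds the number of its critical points, counted with multiplicity, by one.
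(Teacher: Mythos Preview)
Your proposal is correct. For~(\ref{CBnumber0}) your argument and the paper's are essentially the same idea in different packaging: both chain local expansion surgeries (Lemma~\ref{local2}) across the critical values in $\mathring W$ and invoke Lemma~\ref{global2} across critical-value-free gaps. The paper organises this as an induction on $\kappa=m_P(\C,\mathring W)$, peeling off one critical value $z_s$ at a time via an auxiliary curve $\gamma_*\ni z_s$ inside $\mathring W$; you instead unroll the induction into a single surgery chain from a tiny disk $W_0$ out to a slight enlargement $W_3\supset W$. The ``main obstacle'' you flag---that such a finite surgery chain exists and can be arranged to swallow the critical values one at a time---is precisely what the paper's inductive layout is designed to finesse, so the induction buys a cleaner formal structure at the expense of slightly more notation. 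For~(\ref{bandnumber0}) your Riemann--Hurwitz route is a genuinely different and shorter argument than the paper's: the paper re-runs the same induction on $\kappa$, using Lemma~\ref{local2}(2)(3) to track how $\mu$ and $m_P(\mathring D,\C)$ change at each surgery step, whereas Riemann--Hurwitz bypasses all of that bookkeeping in one line at the cost of importing a classical theorem outside the paper's self-contained toolbox. Your fallback ``tracking'' alternative is exactly the paper's inductive argument said informally.
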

\begin{proof}
	We prove this Lemma by induction. Let $\kappa=m_P(\C,\mathring{W}).$
	
	When $\kappa=0$, for any $z_0\in \mathring{W}$ we have $z_0\notin P(\cal{S})$, so $P(\lambda)=z_0$ has $N$ different solutions $\lambda_n,n=1,2,\cdots,N$ in $\C$. Take a sufficiently small closed neighborhood $U_n$ for each point $\lambda_n$, such that $P(U_n)\subset \mathring{W}$ and the $N$ neighborhoods do not intersect each other. Let $W_0=P(U_1)\cap P(U_2)\cap \cdots \cap P(U_N)$, with boundary curve $\gamma_0$. Then $W_0\subset \mathring{W}$ is a closed domain, and $\cal{C}_P(\gamma_0)= \cal{B}_P(\gamma_0)=N$. By Lemma \ref{global2}, we have (\ref{CBnumber0}). 
	
	Suppose that (\ref{CBnumber0}) is true for  $\kappa\leq m$. We now prove it is true for  $\kappa= m+1$. 
	Let  $P(\cal{S})\cap \mathring{W}=\{z_1,z_2,\cdots,z_s\}$, with $s\leq m+1$. Then 
	$$\sum_{j=1}^{s}m_P(\C,\{z_j\})=m_P(\C,\mathring{W})=m+1.$$
	Take a piecewise smooth simple closed curve $\gamma_*\subset \mathring{W}$ such that $z_s\in \gamma_*$ and $\{z_1,z_2,\cdots,z_{s-1}\}\subset \mathring{W}_*$, where ${W}_*$ is the domain bounded by $\gamma_*$.
	Let $\gamma_+$($\gamma_-$) be one step local expansion(contraction) of $\gamma_*$ at $z_s$ with a small closed neighborhood $U(z_s)$, such that $\gamma_+\subset \mathring{W}.$  Then
	\begin{equation}\label{global3-region}
		\{z_1,z_2,\cdots,z_{s-1}\}\subset \mathring{W}_-\subset W_*\subset W_+\subset \mathring{W},
	\end{equation}	
	where $W_+, W_-$ are the domains bounded by $\gamma_+,\gamma_-$ respectively.
	
	Because  $P(\cal{S})\cap \gamma=\emptyset$, hence $P(\cal{S})\cap W \backslash \mathring{W}_+ =\emptyset$, by Lemma \ref{global2}, we have 	$\cal{C}_P(\gamma)=\cal{C}_P(\gamma_+)$ and $\cal{B}_P(\gamma)=\cal{B}_P(\gamma_+)$.
	Because $P(\cal{S})\cap \gamma_-=\emptyset$, according to induction, 
	\begin{equation*}
		\cal{C}_P(\gamma_-)= \cal{B}_P(\gamma_-)=N-\sum_{j=1}^{s-1}m_P(\C,\{z_j\}).
	\end{equation*}	
	By Lemma \ref{local2}, we have 
	\begin{align*}
		\cal{C}_P(\gamma)&=\cal{C}_P(\gamma_+)
		=\cal{C}_P(\gamma_-)-m_P(\C,\{z_s\})\\
		&=N-\sum_{j=1}^{s}m_P(\C,\{z_j\})
		=N-(m+1),
	\end{align*}
	and 
	\begin{align*}
		\cal{B}_P(\gamma)&=\cal{B}_P(\gamma_+)
		=\cal{B}_P(\gamma_-)-m_P(\C,\{z_s\})\\
		&=N-\sum_{j=1}^{s}m_P(\C,\{z_j\})
		=N-(m+1).
	\end{align*}

	Now we prove (\ref{bandnumber0}) by induction.  When $\kappa=0$, there are exactly $N$ disjiont petals in $P^{-1}(\gamma_0)$, thus (\ref{bandnumber0}) holds.
	
	Suppose that (\ref{bandnumber0}) is true for $\kappa\leq m$. We now prove it is true for $\kappa= m+1$. \\
	By Lemma \ref{global2},  there exists a unique petal $C_+\subset  P^{-1}(\gamma_+)$ in $D$, with $\mu(C_+)=\mu(C)$.  Let $D_+$ be the domain bounded by $C_+$, by Lemma \ref{local2}(2)(3), $C_+$ splited into $l=1+m(\mathring{D}_+,\{z_s\})$ petals $C_{1-},C_{2-},\cdots,C_{l-}$  of $P^{-1}(\gamma_-)$, and $\mu(C_+)=\sum_{j=1}^l\mu(C_{j-}).$ 
	Let ${D}_{j-}$ be the domain bounded by ${C}_{j-},j=1,2,\cdots,l$. By (\ref{global3-region}), $m_P(\mathring{D}_+,\C)=m_P( \mathring{D},\C)$ and
	$$\sum _{j=1}^lm_P(\mathring{D}_{j-},\C)
	=\sum _{j=1}^l\sum _{k=1}^{s-1}m_P(\mathring{D}_{j-},\{z_k\})=\sum _{k=1}^{s-1}m_P(\mathring{D}_+,\{z_k\}).$$
	According to induction, $\mu(C_{j-})=1+m_P(\mathring{D}_{j-},\C),j=1,2,\cdots,l$.
	Thus,
	\begin{align*}
		\mu(C)&=\mu(C_+)=\sum_{j=1}^l\mu(C_{j-})\\
		&=\sum _{j=1}^l(1+m_P(\mathring{D}_{j-},\C)))=l+\sum _{k=1}^{s-1}m_P(\mathring{D}_+,\{z_k\})\\
		&=1+\sum _{k=1}^{s}m_P(\mathring{D}_+,\{z_k\})=1+m_P( \mathring{D}_+,\C)=1+m_P( \mathring{D},\C).    
	\end{align*}
\end{proof}

\begin{Proposition}\label{global4}Let $\gamma$ be a piecewise smooth simple closed curve, $C$ be a petal in  $P^{-1}(\gamma)$, $\Gamma$ be a bouquet in $P^{-1}(\gamma)$, and $W, D,\Omega$ be the domains bounded by them respectively. 
	Then
	
	{\rm(1)} $\cal{C}_P(\gamma)=N- m_P(\C,\mathring{W} )$ and $\cal{B}_P(\gamma)=N- m_P(\C, W)$.
	
	{\rm(2)}	The number of bands contained in $C$ is $\mu(C)=1+m_P( \mathring{D},\C).$	
	
	{\rm(3)}	The number of flowers in $\Gamma$ is $\#(\cal{S}\cap \Gamma)$.
	
	{\rm(4)}	The number of petals  contained in $\Gamma$ is 
	$1+m_P(\Gamma,\C).$	
	
	{\rm(5)}	 The number of bands  contained in $\Gamma$ is 
	$\mu(\Gamma)=1+m_P(\Omega,\C).$ 
\end{Proposition}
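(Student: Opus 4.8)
The plan is to reduce every part to \lemref{global3}, which is precisely the case $P(\cal{S})\cap\gamma=\emptyset$ of statements (1), (2) and (5). For a general $\gamma$ the set $\gamma\cap P(\cal{S})$ is finite; at each of its points $z_0$ I would perform one elementary surgery from \lemref{local2}, with a neighbourhood $U$ small enough that $U\cap P(\cal{S})=\{z_0\}$ and that the neighbourhoods chosen at distinct points are pairwise disjoint. Contracting at every such $z_0$ produces a curve $\gamma_-$ with $P(\cal{S})\cap\gamma_-=\emptyset$, and expanding at every such $z_0$ produces a curve $\gamma_+$ with $P(\cal{S})\cap\gamma_+=\emptyset$; one then applies \lemref{global3} to $\gamma_\pm$ and transports the conclusion back to $\gamma$ by \lemref{local2}.

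For (1): since $z_0\in\gamma=\partial W$ we have $z_0\notin\mathring{W}$, so smallness of $U$ gives $m_P(\C,\mathring{W}_-)=m_P(\C,\mathring{W})$ and $m_P(\C,\mathring{W}_+)=m_P(\C,\mathring{W})+m_P(\C,\gamma)=m_P(\C,W)$; as contraction preserves $\cal{C}_P$ and expansion preserves $\cal{B}_P$ (\lemref{local2}(1)), \lemref{global3} yields $\cal{C}_P(\gamma)=\cal{C}_P(\gamma_-)=N-m_P(\C,\mathring{W})$ and $\cal{B}_P(\gamma)=\cal{B}_P(\gamma_+)=N-m_P(\C,W)$. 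For (2): running the contractions on the petal $C$, \lemref{local2}(2) replaces $C$ at each step by a unique petal inside $D$ with the same band number, leaving a petal $C_-\subseteq P^{-1}(\gamma_-)$ with $\mathring{D}_-\subseteq\mathring{D}$ and $\mu(C_-)=\mu(C)$; since $P(\mathring{D})=\mathring{W}$ and $\mathring{W}\cap\gamma=\emptyset$ (\lemref{global0+}), no stationary point lying over a point of $\gamma$ is in $\mathring{D}$, hence $m_P(\mathring{D}_-,\C)=m_P(\mathring{D},\C)$ and \lemref{global3} gives $\mu(C)=1+m_P(\mathring{D},\C)$. Part (3) is immediate from \corref{flower}: the centres of the flowers in $P^{-1}(\gamma)$ are exactly $\cal{S}\cap P^{-1}(\gamma)$, each centre determines one flower, a flower is connected hence lies in a single bouquet, and $\Gamma\subseteq P^{-1}(\gamma)$.

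For (4) and (5) I would contract once more, after recording two facts. First, distinct petals of $P^{-1}(\gamma)$ have pairwise disjoint domain interiors: a petal lies in $\partial P^{-1}(W)$ (\lemref{global0+}), whereas the interior of a petal's domain is an open subset of $P^{-1}(\mathring{W})$, hence of the interior of $P^{-1}(W)$, so no petal is contained in another petal's domain and the domain interiors must be disjoint; consequently the filled bouquet $\Omega$ is the union of the filled petals of $\Gamma$, which meet only at flower centres, so $\mathring{\Omega}$ is the disjoint union of these petal-domain interiors and $\partial\Omega\cap\cal{S}=\cal{S}\cap\Gamma$. Second, by the proof of \lemref{local2}(1) one contraction at $z_0$ disconnects each flower over $z_0$ into its $\tau(\lambda)+1$ constituent petals, leaves the total number of petals unchanged, and acts only near $P^{-1}(z_0)$. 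Hence after all the contractions $\Gamma$ has split into $1+\sum_{\lambda\in\cal{S}\cap\Gamma}\tau(\lambda)=1+m_P(\Gamma,\C)$ pieces, each a single petal (as $P(\cal{S})\cap\gamma_-=\emptyset$ forces $\cal{C}_P(\gamma_-)=\cal{B}_P(\gamma_-)$ by \lemref{global3}); since, by the first fact, the petals of $\Gamma$ correspond bijectively to these one-petal pieces with band numbers preserved (\lemref{local2}(2)), $\Gamma$ contains exactly $1+m_P(\Gamma,\C)$ petals --- this is (4) --- and $\mu(\Gamma)=\sum_j\mu(C_j)$ over the pieces $C_j$, with $\mu(C_j)=1+m_P(\mathring{D}_j,\C)$ for the domain $D_j$ of $C_j$, by \lemref{global3}. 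A stationary point in $\mathring{\Omega}$ has $P$-value in $\mathring{W}$ (a stationary point with $P$-value on $\gamma$ would be a flower centre, hence in $\partial P^{-1}(W)$ and not in $\mathring{\Omega}$; cf. \corref{flower}, \propref{local1}), so it survives every contraction and lands in exactly one $\mathring{D}_j$, and conversely each $\mathring{D}_j$ descends from $\mathring{\Omega}$; therefore $\sum_j m_P(\mathring{D}_j,\C)=m_P(\mathring{\Omega},\C)$, and $\mu(\Gamma)=(1+m_P(\Gamma,\C))+m_P(\mathring{\Omega},\C)=1+m_P(\Omega,\C)$, which is (5).

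The routine ingredients are the smallness estimates on the surgery neighbourhoods and the fact that the band decomposition of $P^{-1}(\gamma)$ restricts to each petal and to each bouquet (\corref{band deco2}). The step I expect to be the main obstacle is making rigorous that an elementary surgery perturbs the petal, bouquet and band counts in a way strictly localized at the flowers over the surgery point --- precisely, that one contraction at $z_0$ breaks each flower over $z_0$ into exactly $\tau(\lambda)+1$ separate single-petal components and disturbs nothing else --- together with the structural input that the petals of a bouquet have disjoint domain interiors, so that $\mathring{\Omega}$ is their disjoint union; granting these, the per-bouquet identities in (4) and (5) follow from the global identity of \lemref{global3}.
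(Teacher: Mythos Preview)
Your proposal is correct and follows the same surgery-reduction strategy as the paper: push the boundary stationary values off $\gamma$ by the local expansion/contraction of \lemref{local2}, then invoke \lemref{global3}. The only differences are organizational: for (1) the paper uses only the expanded curve $\gamma_1$ (recovering $\cal{C}_P(\gamma)$ from $\cal{C}_P(\gamma_1)$ by adding back $m_P(\C,\gamma)$) rather than both $\gamma_\pm$; for (4) the paper gives a one-line appeal to \corref{flower} where you supply a more detailed contraction argument; and for (5) the paper sums $\mu$ over the original petals of $\Gamma$ using the already-established part (2), rather than descending again to $\gamma_-$ as you do. The ``main obstacle'' you flag --- that a single contraction at $z_0$ increases the bouquet count coming from $\Gamma$ by exactly $\sum_{\lambda\in\cal{S}\cap\Gamma\cap P^{-1}(z_0)}\tau(\lambda)$ --- is exactly the per-$\lambda$ statement in the paper's proof of \lemref{local2}(1), so no new idea is needed there.
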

\begin{proof}
	{\rm(1)} 	Suppose $P(\cal{S})\cap \gamma=\{z_1,z_2,\cdots,z_s\}$ and $P(\cal{S})\cap \mathring{W}=\{z_{s+1},z_{s+2},\cdots, z_{s+t}\}$. 
	For $j=1,2,\cdots,s$, let $z_j$ be the center points with the small neighborhood $U(z_j)$, we can  make $s$ steps local  expansion(contraction) of $\gamma$ and get $\gamma_1$($\gamma_2$),  then $P(\cal{S})\cap \gamma_1=\emptyset$ and  $\{z_1,z_2,\cdots,z_{s+t}\}\subset \mathring{W}_1$, $P(\cal{S})\cap \gamma_2=\emptyset$
	and $\{z_{s+1},\cdots,z_{s+t}\}\subset \mathring{W}_2\subset W\subset W_1,$ where $W_1, W_2$ are the domains bounded by $\gamma_1,\gamma_2$ respectively.
	By Lemma \ref{global3}, we have $$\cal{C}_P(\gamma_1)= \cal{B}_P(\gamma_1)=N-m_P(\C, \mathring{W}_1)=N-m_P(\C, W).$$ 
	Moreover, by Lemma \ref{local2}, we have $\cal{B}_P(\gamma)=\cal{B}_P(\gamma_1)=
	N- m_P(\C, W)$ and 
	\begin{align*}
		\cal{C}_P(\gamma)&=\cal{C}_P(\gamma_1)+\sum _{j=1}^{s}m_P(\C,\{z_j\})\\
		&=N-m_P(\C, W)+m_P(\C,\gamma)=N-m_P(\C, \mathring{W}).
	\end{align*}
	
	{\rm(2)} 	By Lemma \ref{local2} and Lemma \ref{global3}, there is a unique petal $C_{2}$ of $P^{-1}(\gamma_2)$ in $D$, and 
	$$\mu(C)=\mu(C_{2})=1+m_P( \mathring{D}_{2},\C)=1+m_P( \mathring{D},\C).$$

	{\rm(3)(4)} 	By Corollary \ref{flower}, $\lambda$ is a center of a flower  if and only if $\lambda\in \cal{S}\cap  P^{-1}(\gamma)$. Hence the number of flowers in $\Gamma$ is $\#(\cal{S}\cap \Gamma)$, and the number of petals  contained in $\Gamma$ is 
	$1+m_P(\Gamma,\C).$ 
	
	{\rm(5)} 	By Corollary \ref{band deco2}, there is a unique petal decomposition, hence $\Gamma=C_1\cup C_2\cup \cdots \cup C_l  $, where $l=1+m_P(\Gamma,\C).$
	By (2), $\mu(C_{j})=1+m_P( \mathring{D}_{j},\C)$.
	Thus,
	\begin{align*}\mu(\Gamma)&=\sum _{j=1}^l\mu(C_j)=\sum _{j=1}^l(1+m_P( \mathring{D}_{j},\C))\\
		&=l+m_P( \mathring{\Omega},\C)=1+m_P(\Omega,\C).  
	\end{align*}
\end{proof}

\subsection{Proof of Theorem \ref{main1}.}
For $|a|\ne |c|$, $\mathcal{E} =\{ae^{-i\theta }+ce^{i\theta }\mid \theta \in(-\pi,\pi] \}$ is an ellipse, which is analytic and counterclockwise with respect to parameter $\theta$.  Let $W$ be the domain bounded by $\mathcal{E}$.

\rm{(1)} By Corollary \ref{global0}, we have (1).

\rm{(2)} For $\cal{C}_P(\gamma)=N- m_P(\C,\mathring{W} )=1+ m_P(\C,\C \backslash \mathring{W} )$,
by Corollary \ref{band deco2}, Proposition \ref{global4}, we have (2).

\rm{(3)} By Corollary \ref{band deco2}, $\sigma(J)$ has a unique band decomposition, such that each petal is composed of bands end to end.  By Proposition \ref{local1} and  Remark \ref{analytic}, each band $\gamma_n=\{\lambda_n(\theta),\theta\in(-\pi,\pi] \}$ is  piecewise analytic curve and the no smooth point points are in $\cal{S}$.
By Lemma \ref{global0+}, $P(\mathring{D})=\mathring{W}$. By Lemma \ref{band number0} and  Proposition \ref{global4},  $\mu(C)=\frac{1}{2\pi i}\int_{C} \frac{P'(\lambda)}{P(\lambda)-P(\lambda_0)}d\lambda=1+m_P(\mathring{D},\C).$

\rm{(4)} By Corollary \ref{flower}, we  have (4).

\rm{(5)} It is obvious that $\sigma(J)$ has a unique bouquet decomposition. By (2) and (4), every bouquet is composed of a petal or some flowers. Because $N- m_P(\C,\mathring{W} )=1+ m_P(\C,\C\backslash W) $, by Proposition \ref{global4}, we have (5).        \hfill    $\square $

\subsection{Proof of Theorem \ref{main2}.}
For $|a|= |c|\ne 0$, $\cal{E}=e^{i\varphi}  \cdot [-2R,2R]$ is a  straight line segment, which can be regarded as a degenerate   ellipse. 	

\rm{(1)} By Corollary \ref{global0}, we have (1). 

\rm{(2)} By Corollary \ref{band deco1}, Remark \ref{not unique}, Proposition \ref{local1}, Remark \ref{analytic}, we have (2). 

\rm{(3)} By Proposition \ref{local1}, we have (3). 

\rm{(4)} Let $W$ be the domain bounded by an ellipse, such that $\mathcal{E}\subset W$ and $W\cap P(\cal{S})=\mathcal{E}\cap P(\cal{S})$. Then $P^{-1}(W) $ has the same global structure. Let $W$ approach $\mathcal{E}$, then by Theorem \ref{main1} we have (4). 
\hfill    $\square $

\section{Real spectrum and spectrum on straight line}
\subsection{ Real spectrum}  
In this section, as an application of the Theorem \ref{main2}, we first consider the case when the spectrum belongs to $\R$. Because the case $a=c=0$ is trivial, we still assume that $a$ and $c$ are not both $0$. 

If $J$ is a $N$-periodic self-adjoint operator, that is, $a_n=\bar{c}_n$ and $b_n\in \R$ for $n=1,2,\cdots,N$. Then $a=\bar{c}$, and  $\mathcal{E}=[-2R,2R]$, with $R=|a|=|c|$. On the other hand, through induction, we can get that the discriminant $P$ is a monic real polynomial of degree $N$. By the classical Floquet Theory, for $R\neq 0$, we have 
\begin{equation}\label{rp0}
	\sigma(J)=P^{-1}([-2R,2R])=\bigcup^N_{n=1}{[\alpha_n, \beta_n ]}\subset \R, 
\end{equation}
with 
\begin{equation}\label{rp1}
	\alpha_N< \beta_N\leq \alpha_{N-1}< \cdots<\beta_2\leq \alpha_1< \beta_1,
\end{equation}
where $[\alpha_{n},\beta_{n},], n=1,2,\cdots,N$ are $N$ bands of $\sigma(J)$.\footnote{If $R=0$, these $N$ closed intervals degenerate into $N$ spectral points, counting the multiplicity.} 
For $n=1,2,\cdots,N-1$, if $\beta_{n+1}<\alpha_{n}$, $(\beta_{n+1},\alpha_{n})$ is an open gap of $\sigma(J)$.
By the Differential Mean Value Theorem, the stationary points  $\cal{S}=\{\lambda _1,\lambda _2,...,\lambda _{N-1}\}\subset \R$, and satisfies 
\begin{equation}\label{rp2}
	\alpha_N<  \beta_N\leq \lambda_{N-1} \leq  \alpha_{N-1}<  \cdots<\beta_2 \leq \lambda_1 \leq \alpha_1< \beta_1.
\end{equation}

If $J$ is a non-self-adjoint operator, we have 
\begin{Corollary}\label{real spectrum0}
	If $a$ and $c$ are not both $0$, then $ \sigma(J)\subset \R$ if and only if there is a unique band decomposition (\ref{rp0}) with (\ref{rp1}).
\end{Corollary}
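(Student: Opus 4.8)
The backward direction is immediate, since (\ref{rp0}) already asserts $\sigma(J)=\bigcup_{n=1}^{N}[\alpha_n,\beta_n]\subset\R$. So the content lies entirely in the forward direction, which I would organize around Theorem~\ref{main2}. First I would rule out the elliptic case: if $|a|\ne|c|$ then $\mathcal{E}$ is a genuine ellipse and, by Theorem~\ref{main1}(2), $\sigma(J)=P^{-1}(\mathcal{E})$ contains at least one petal, that is, a simple closed curve; since $\R$ has no subset homeomorphic to a circle, this is impossible. As $a$ and $c$ are not both $0$, we are left with $|a|=|c|=R>0$, so $\mathcal{E}=e^{i\varphi}[-2R,2R]$ and all of Theorem~\ref{main2} is available.

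Next I would upgrade the bare inclusion $\sigma(J)\subset\R$ to the rigid form in (\ref{rp0}), namely $\mathcal{E}=[-2R,2R]$ and $P\in\R[\lambda]$. By Theorem~\ref{main2}(2) a band decomposition $\sigma(J)=\bigcup_{n=1}^{N}\gamma_n$ exists; since $\mathcal{E}$ is an arc, each band is homeomorphic to $\mathcal{E}$, hence a non-degenerate compact arc contained in $\R$, i.e.\ a closed interval $[\alpha_n,\beta_n]$ with $\alpha_n<\beta_n$, so $\sigma(J)$ contains a genuine real interval. At any $\lambda$ in the dense set $\sigma(J)\setminus\bigl(\cal{S}\cup P^{-1}(\{\pm 2Re^{i\varphi}\})\bigr)$, Proposition~\ref{local1} says that $P^{-1}(\mathcal{E})$ is a smooth arc through $\lambda$ and that $P$ is a local biholomorphism there; this arc lies in $\R$, so its tangent is real, and $dP_\lambda$ carries it to the tangent direction of $\mathcal{E}$, forcing $P'(\lambda)\in e^{i\varphi}\R$. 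Hence the polynomial $\lambda\mapsto\im\bigl(e^{-i\varphi}P'(\lambda)\bigr)$ vanishes on a real interval, so vanishes identically; equating $e^{-i\varphi}P'$ with the coefficient-conjugate polynomial $e^{i\varphi}\overline{P'}$ and matching leading coefficients gives $e^{2i\varphi}=1$ and $P'\in\R[\lambda]$. Thus $\mathcal{E}=[-2R,2R]$; and since $\sigma(J)\subset\R$ is nonempty, any $\lambda_0\in\sigma(J)$ satisfies $P(\lambda_0)\in[-2R,2R]\subset\R$, which forces the constant term of $P$ to be real as well, so $P\in\R[\lambda]$ and $\sigma(J)=P^{-1}([-2R,2R])=\bigcup_{n=1}^{N}[\alpha_n,\beta_n]$.

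Finally I would read off uniqueness of the decomposition and the ordering (\ref{rp1}) from the local picture. If some $\lambda\in\cal{S}\cap\sigma(J)$ had $P(\lambda)$ in the interior of $\mathcal{E}$, then by Theorem~\ref{main2}(3) the neighbourhood of $\lambda$ would be divided into $2\tau(\lambda)+2\ge4$ pieces by the bands through $\lambda$; if instead $P(\lambda)\in\{\pm 2R\}$ it would be divided into $\tau(\lambda)+1$ pieces. Since a point of a subset of $\R$ admits at most two local branches, the first case cannot occur and the second forces $\tau(\lambda)=1$; in particular at most two bands meet at any point. Consequently $P(\cal{S})\cap\mathcal{E}\setminus\{\pm 2R\}=\emptyset$, so by the uniqueness criterion in Theorem~\ref{main2}(2) the band decomposition is unique. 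For the ordering, Corollary~\ref{bands intersect+} gives that any two bands meet in at most one point, hence the intervals $[\alpha_n,\beta_n]$ are pairwise either disjoint or meeting at a single common endpoint; relabelling them in left-to-right order as $[\alpha_N,\beta_N],\dots,[\alpha_1,\beta_1]$ then yields $\alpha_N<\beta_N\le\alpha_{N-1}<\cdots<\beta_2\le\alpha_1<\beta_1$, which is (\ref{rp1}).

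The main obstacle is the second step. Theorems~\ref{main1} and~\ref{main2} only locate $\sigma(J)$ on the preimage of a segment that a priori could be tilted and of a polynomial with complex coefficients, and promoting the soft hypothesis $\sigma(J)\subset\R$ to the two rigid conclusions $\mathcal{E}=[-2R,2R]$ and $P\in\R[\lambda]$ genuinely needs the local biholomorphism of Proposition~\ref{local1}, analytic continuation off a spectral interval, and the monic normalization of $P$, all used together; steps one and three are short arguments on top of the already-established structure theorems.
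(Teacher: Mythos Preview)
Your proof is correct but takes a substantially longer route than the paper's. The paper disposes of the corollary in a few lines: since $\sigma(J)\subset\R$ contains no simple closed curve, the case $|a|\ne|c|$ is excluded; Theorem~\ref{main2} then gives a band decomposition, each band is a piecewise analytic curve lying in $\R$ and hence a closed interval, and Corollary~\ref{bands intersect+} (any two bands meet in at most one point) forces the ordering~(\ref{rp1}). The paper does \emph{not}, at this stage, prove that $\mathcal{E}=[-2R,2R]$ or that $P$ has real coefficients---those facts are deferred to the proof of Theorem~\ref{real spectrum}, where they are obtained via Lemma~\ref{rp} rather than by your tangent-direction argument. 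Likewise the paper leaves uniqueness implicit in the interval structure, while you verify it by checking the criterion $P(\cal{S})\cap\mathcal{E}\setminus\{\pm 2Re^{i\varphi}\}=\emptyset$ through the angular count of Theorem~\ref{main2}(3). So your version is self-contained and establishes the literal content of (\ref{rp0}) including the middle equality $\sigma(J)=P^{-1}([-2R,2R])$, at the price of doing here work the paper postpones; the paper's version is terser but effectively reads (\ref{rp0}) as asserting only the interval decomposition.
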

\begin{proof}
	We only need to prove the necessity. Because $ \sigma(J)\subset \R$, there is no petal in $\sigma(J)$, and  for $a$ and $c$ are not both $0$, we have $|a|= |c|\neq 0$. By Theorem \ref{main2}, there is a band decomposition
	$ \sigma(J)=\bigcup^N_{n=1}{\gamma_n }.$
	Each band $\gamma_n $ is a piecewise analytic curve. By Corollary \ref{bands intersect+}, at most one intersection between any two bands. 
	For $ \sigma(J)\subset \R$, each band $\gamma_n$ is a closed interval, and there is at most one intersection between adjacent closed intervals. Thus, the conclusion is also true.
\end{proof}

Corollary \ref{real spectrum0} gives the shape of the real spectrum. To further study real spectrum, we first give a lemma.
\begin{Lemma}\label{rp}
	Let $f(z)$ be a polynomial. If there exists $[\alpha, \beta ]\subset \R$ such that $f([\alpha, \beta ])\subset \R$,  then $f(z)$ is a real coefficient polynomial.
\end{Lemma}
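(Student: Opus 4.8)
The plan is to use the fact that a polynomial is determined by finitely many of its values, together with the observation that $[\alpha,\beta]$ (with $\alpha<\beta$) is an infinite set, so the hypothesis $f([\alpha,\beta])\subset\R$ pins down far more values than $\deg f$.

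First I would decompose $f$ according to its coefficients: write $f(z)=g(z)+i\,h(z)$, where $g$ and $h$ are the polynomials whose coefficients are, respectively, the real and imaginary parts of the coefficients of $f$. By construction $g$ and $h$ have real coefficients, and for every real $x$ one has $\operatorname{Re} f(x)=g(x)$ and $\operatorname{Im} f(x)=h(x)$. The hypothesis says exactly that $h(x)=0$ for all $x\in[\alpha,\beta]$. Since $[\alpha,\beta]$ is infinite and a nonzero polynomial has only finitely many roots, $h$ must be the zero polynomial; hence every coefficient of $f$ is real, which is the claim.

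An equivalent route, if one prefers to avoid splitting into real and imaginary parts, is interpolation: choose $n+1$ distinct points $x_0,\dots,x_n\in[\alpha,\beta]$ where $n=\deg f$, note that each value $y_j:=f(x_j)$ is real by hypothesis, and invoke the Lagrange interpolation formula
$$f(z)=\sum_{j=0}^{n} y_j \prod_{k\ne j}\frac{z-x_k}{x_j-x_k},$$
whose right-hand side manifestly has real coefficients because all $x_k$ and all $y_j$ are real; since $f$ and this interpolating polynomial agree at $n+1$ points and both have degree at most $n$, they coincide. There is essentially no obstacle in the argument; the only point requiring care is that $[\alpha,\beta]$ be a genuine (nondegenerate) interval, and hence infinite, since the statement fails for a single point.
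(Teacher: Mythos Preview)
Your proposal is correct, and your first approach---splitting $f=g+ih$ into real and imaginary coefficient parts and observing that $h$ vanishes on the infinite set $[\alpha,\beta]$, hence identically---is exactly the argument the paper gives. The Lagrange interpolation alternative you sketch is a valid second route not present in the paper.
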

\begin{proof}
	$f(z)$ is a polynomial, it can be expressed as $f(z)=f_1(z)+f_2(z)i$, where $f_1(z)$ and $f_2(z)$ are all real coefficient polynomials.  For every $t\in [\alpha, \beta ]$, $f(t)=f_1(t)+f_2(t)i\in \R$, that is, $f_2(t)=0$ for every $t\in [\alpha, \beta ]$. Thus, $f_2(t)=0$ for all $t \in \R$, and $f(z)=f_1(z)$ is a real coefficient polynomial.
\end{proof}

\noindent{\bf Proof of Theorem \ref{real spectrum}. }(Sufficiency). Because $a$ and $c$ are not both $0$, and $ \sigma(J)\subset \R$,  by Corollary \ref{real spectrum0}, we have  a band decomposition (\ref{rp0}) with (\ref{rp1}).

For $\mathcal{E}=e^{i\varphi}  \cdot[-2R,2R] $,	let 
$P(\lambda)=e^{i\varphi} f(\lambda),$
then $f([\alpha_n, \beta_n ])= [-2R,2R]$ is one-to-one for all $n=1,2,\cdots,N$.
By Lemma \ref{rp}, $f$ is a real coefficient polynomial, 
so $f'(\lambda)=0$ has $N-1$ real single roots $\{\lambda_1,\lambda_2,\cdots,\lambda_{N-1}\}$, satisfying  (\ref{rp2}).

For $P'(\lambda)=e^{i\varphi} f'(\lambda)$,   $P'(\lambda)=0$ has the same  $N-1$ real single roots,  satisfy  $\lambda_1>\lambda_2>\cdots>\lambda_{N-1}$. We have condition (2).

Because $P(\lambda)$  is a monic  polynomial of degree $N$, we have 
\begin{equation}\label{dp}
	P'(\lambda)=N(\lambda-\lambda_1)(\lambda-\lambda_2)\cdots (\lambda-\lambda_{N-1}).
\end{equation}
So 
$P(\lambda)=Q(\lambda)+C,$  where $Q(\lambda)$ is a real coefficient polynomial. To ensure that equation   $P(\lambda)=e^{i\varphi} f(\lambda)=Q(\lambda)+C$   is correct, we need $C\in \R$ and $e^{i\varphi}=\pm 1$. Thus, $P(\lambda)$ is a real coefficient polynomial. 

For  $e^{i\varphi}=\pm 1$, we have $\frac{\varphi_a+\varphi_c}{2}=\varphi=n\pi,n\in \Z$. So $\varphi_a=-\varphi_c+2n\pi$ and $a=\bar{c}$. We have condition (1).

For all $n=1,2,\cdots,N-1$, $P(\lambda_n)\in \R$. If $|P(\lambda_n)|< 2R$, for $\cal{E}=[-2R,2R], $ by Proposition \ref{local1}, there are complex spectrum near $\lambda_n$, so $|P(\lambda_n)|\geq 2R$.
By (\ref{dp}), if $\lambda_n$ is a endpoint of a band we have $(-1)^nP(\lambda_n) = 2R$, else if $\lambda_n \notin \sigma(J)$ we have  $(-1)^nP(\lambda_n) >2R$. Thus,
$(-1)^nP(\lambda_n) \geq 2R$. We have condition (3).

(Necessity).  By condition (2), we have (\ref{dp}), so $P(\lambda)=Q(\lambda)+C,$  where $Q(\lambda)$ is a real coefficient polynomial.  For $P(\lambda_1) \in \R$, we have $C \in \R$, so $P(\lambda)$ is a real coefficient polynomial. 

Because $a=\bar{c}$,  so $\cal{E}=[-2R,2R]\in \R$ and $\sigma(J)=P^{-1}([-2R,2R])$. For $P(\lambda)$ being a  monic  real coefficient polynomial, there exist $\lambda_0,\lambda_N$ such that  $\lambda_0>\lambda_1>\lambda_{N-1}>\lambda_N$, with 
$P(\lambda_0)>2R$ and $(-1)^NP(\lambda_N)>2R$. Then, for  all $ n= 0,1,2,\cdots,N$, we have $(-1)^nP(\lambda_n) \geq 2R$. By Intermediate Value Theorem, for  any $ n=1,2,\cdots,N$, there is a closed interval $\gamma_n\subset [\lambda_{n},\lambda_{n-1}]$, which is a band of $\sigma(J)$. Now we have $N$ bands of $\sigma(J)$, by Theorem \ref{main2}, $\sigma(J)=\bigcup^{N}_{n=1} \gamma_n\subset \R$.         \hfill    $\square $   

\begin{Corollary}
	If $ \sigma(J)\subset \R$, then $P(\lambda)$ is a real coefficient polynomial.\footnote{If $a=c=0$, for $ \sigma(J)=P^{-1}(0)\subset \R$, this conclusion is obviously correct.}
\end{Corollary}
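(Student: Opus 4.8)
The plan is to split along the dichotomy flagged in the footnote and, in the main case, to reuse the mechanism from the proof of Theorem~\ref{real spectrum}.

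First I would dispose of the degenerate case $a=c=0$. Here $\mathcal E=\{0\}$, so $\sigma(J)=P^{-1}(0)$ is precisely the set of roots of $P$; the hypothesis $\sigma(J)\subset\R$ then says every root of $P$ is real, and since $P$ is monic, factoring $P(\lambda)=\prod_j(\lambda-r_j)$ over its roots counted with multiplicity exhibits $P$ as a real-coefficient polynomial.

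Next, assuming $a$ and $c$ are not both $0$, I would first show $|a|=|c|$. If instead $|a|\neq|c|$, then $\mathcal E$ is a nondegenerate ellipse and Theorem~\ref{main1}(2) gives a petal decomposition of $\sigma(J)=P^{-1}(\mathcal E)$ with $\cal{C}_P(\mathcal E)=1+m_P(\C,\C\backslash\mathring W)\geq1$; a petal is a simple closed curve in $\C$, hence cannot be contained in the line $\R$, contradicting $\sigma(J)\subset\R$. So $|a|=|c|=R>0$ and $\mathcal E=e^{i\varphi}\cdot[-2R,2R]$ with $\varphi=\frac{\varphi_a+\varphi_c}{2}$. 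Then I would pass to a single band: by Corollary~\ref{real spectrum0}, $\sigma(J)$ has a band decomposition $\sigma(J)=\bigcup_{n=1}^N[\alpha_n,\beta_n]$ with (\ref{rp1}), and since $R>0$ the segment $\mathcal E$ is nondegenerate and $P$ carries each band homeomorphically onto $\mathcal E$, so in particular $\alpha_1<\beta_1$. Setting $f(\lambda)=e^{-i\varphi}P(\lambda)$ one has $f([\alpha_1,\beta_1])=e^{-i\varphi}\mathcal E=[-2R,2R]\subset\R$, so Lemma~\ref{rp} yields that $f$ has real coefficients. Comparing leading coefficients: $P$ is monic and $f$ real, while $P=e^{i\varphi}f$, which forces $e^{i\varphi}$ to be the reciprocal of the real leading coefficient of $f$; hence $e^{i\varphi}=\pm1$ and $P=\pm f$ has real coefficients.

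I expect no serious obstacle — the statement is essentially a corollary of (the proof of) Theorem~\ref{real spectrum}. The only step requiring a moment's care is the reduction $|a|=|c|$ in the nondegenerate case, which relies on Theorem~\ref{main1} together with the elementary fact that a simple closed curve in the plane is never contained in a line.
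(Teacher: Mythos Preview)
Your proposal is correct and follows essentially the same route as the paper: the paper does not give a separate proof of this corollary but extracts it from the sufficiency argument of Theorem~\ref{real spectrum}, where one reduces to $|a|=|c|\neq0$ via the absence of petals in $\R$, applies Lemma~\ref{rp} to $f=e^{-i\varphi}P$ on a spectral band, and then argues $e^{i\varphi}=\pm1$. Your deduction of $e^{i\varphi}=\pm1$ by comparing leading coefficients is in fact a slight streamlining of the paper's version, which passes through $P'=e^{i\varphi}f'$, the reality of the stationary points, and writing $P=Q+C$ with $Q$ real before reaching the same conclusion.
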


In Theorem \ref{real spectrum}, if $(-1)^nP(\lambda_n) >2R$ for all $n=1,2,\cdots,N-1$, then $\lambda_n \in (\beta_{n+1},\alpha_{n})\subset \rho(J)$, that is, all gaps are open. 
Conversely, if $(-1)^nP(\lambda_n) = 2R$ for all $n=1,2,\cdots,N-1$, then $\lambda_n\in \sigma(J)$ and $\beta_{n+1}=\lambda_{n} = \alpha_{n}$. Thus we have
\begin{Corollary}\label{interval spectrum}
	$ \sigma(J)\subset \R$ is a closed interval, if and only if the following three conditions hold,
	
	{\rm(1)} $a=\bar{c}$.	
	
	{\rm(2)}   $\cal{S}=\{\lambda_1,\lambda_2,\cdots,\lambda_{N-1}\}\subset \R$, with $\lambda_1>\lambda_2>\cdots>\lambda_{N-1}$.
	
	{\rm(3)}   $(-1)^nP(\lambda_n) = 2R, n=1,2,\cdots,N-1$.	
	
\end{Corollary}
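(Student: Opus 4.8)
The plan is to read this off from \thmref{real spectrum} together with the band description recorded in (\ref{rp0})--(\ref{rp2}). Conditions (1) and (2) here coincide with conditions (1) and (2) of \thmref{real spectrum}, and condition (3) here, $(-1)^nP(\lambda_n)=2R$, is exactly the equality case of condition (3) there, $(-1)^nP(\lambda_n)\ge 2R$. Hence the only thing left to prove is: granted that $\sigma(J)\subset\R$ (equivalently, granted (1), (2) and $(-1)^nP(\lambda_n)\ge 2R$), the set $\sigma(J)$ is a single closed interval if and only if $(-1)^nP(\lambda_n)= 2R$ for every $n=1,\dots,N-1$. I would argue this by tracking the spectral gaps: by \thmref{real spectrum} we have the band decomposition $\sigma(J)=\bigcup_{n=1}^N[\alpha_n,\beta_n]$ with the ordering (\ref{rp1}) and $\beta_{n+1}\le\lambda_n\le\alpha_n$ for $1\le n\le N-1$, so the only candidate spectral points of $[\beta_{n+1},\alpha_n]$ are its endpoints $\beta_{n+1}$ and $\alpha_n$, and $\sigma(J)$ is an interval iff $\beta_{n+1}=\alpha_n$ for every $n$.

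Fix $n$. If $(-1)^nP(\lambda_n)=2R$, then $P(\lambda_n)\in\{\pm 2R\}\subset\mathcal{E}$, so $\lambda_n\in P^{-1}(\mathcal{E})=\sigma(J)$. Near the simple critical point $\lambda_n$ one has $P(\lambda)-P(\lambda_n)=\tfrac{1}{2}P''(\lambda_n)(\lambda-\lambda_n)^2+O((\lambda-\lambda_n)^3)$, so the local picture of $P^{-1}(\mathcal{E})$ at $\lambda_n$ is symmetric under $\lambda-\lambda_n\mapsto-(\lambda-\lambda_n)$ to leading order (this is the content of \propref{local1}); since $\sigma(J)\subset\R$, this forces $\sigma(J)$ to fill a full real neighbourhood of $\lambda_n$, and together with $\beta_{n+1}\le\lambda_n\le\alpha_n$ this gives $\beta_{n+1}=\lambda_n=\alpha_n$, so the bands $[\alpha_{n+1},\beta_{n+1}]$ and $[\alpha_n,\beta_n]$ abut. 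If instead $(-1)^nP(\lambda_n)>2R$, then $|P(\lambda_n)|>2R$, so $\lambda_n\notin\sigma(J)$ and hence $\beta_{n+1}<\lambda_n<\alpha_n$, i.e. $(\beta_{n+1},\alpha_n)$ is a genuine open gap. Therefore $\sigma(J)$ is connected — equal to the single closed interval $[\alpha_N,\beta_1]$ — precisely when no open gap occurs, that is precisely when $(-1)^nP(\lambda_n)=2R$ for all $n$. Combined with \thmref{real spectrum}, this yields the biconditional in both directions.

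The step needing the most care is the implication $\lambda_n\in\sigma(J)\Rightarrow\beta_{n+1}=\lambda_n=\alpha_n$, i.e. ruling out that $\lambda_n$ is merely a one-sided band endpoint or an isolated spectral point; this is exactly where \propref{local1} (or \thmref{main2}(3)) is used, via the fact that no arc of $P^{-1}(\mathcal{E})$ emanating from $\lambda_n$ may leave $\R$. The accompanying sign bookkeeping — why it is $(-1)^nP(\lambda_n)=2R$ and not $(-1)^{n+1}P(\lambda_n)=2R$ — is routine from (\ref{dp}): on $[\beta_{n+1},\alpha_n]$ the only critical point of $P$ is $\lambda_n$, and the factorization $P'(\lambda)=N\prod_{k=1}^{N-1}(\lambda-\lambda_k)$ with $\lambda_1>\dots>\lambda_{N-1}$ fixes the sign of $P''(\lambda_n)=N\prod_{k\ne n}(\lambda_n-\lambda_k)$, hence whether the extreme value of $P$ on that interval is $+2R$ or $-2R$. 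Everything else is a direct appeal to \thmref{real spectrum}.
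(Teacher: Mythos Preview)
Your argument is correct and follows the same route as the paper: reduce to \thmref{real spectrum}, then show gap-by-gap that $(-1)^nP(\lambda_n)=2R$ is equivalent to $\beta_{n+1}=\lambda_n=\alpha_n$. The paper simply asserts the implication $\lambda_n\in\sigma(J)\Rightarrow\beta_{n+1}=\lambda_n=\alpha_n$ without comment, whereas you justify it via the local structure at the critical point; note only that since $P(\lambda_n)=\pm 2R$ is an \emph{endpoint} of $\mathcal{E}$, the precise reference is \thmref{main2}(3) (two arcs at angle $\pi$) rather than \propref{local1}, as you yourself observe in your final paragraph.
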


It is a special case that the spectrum is a real interval, and the condition (2)(3) in Corollary \ref{interval spectrum} can be weakened as in Theorem \ref{interval spectrum2}.

\noindent 
\textbf{Proof of Theorem \ref{interval spectrum2}.}
The sufficiency is obvious, thus we only need to prove the necessity.

By condition (1), $\cal{E}=[-2R,2R]\in \R$, with $R=|a|$.
By condition (2) and Corollary \ref{N bands end to end}, $ \sigma(J)$ is composed of $N$ bands $\gamma_1,\gamma_2,\cdots,\gamma_N$ end to end. 
By condition (3), $\alpha$ and $\beta$ are endpoints of some bands,
so $ \sigma(J)$ is a curve connecting $\alpha$ and $\beta$. Let $\cal{S}=\{\lambda_1,\lambda_2,\cdots,\lambda_{N-1}\}$, we have (\ref{dp}). Let $\lambda_0=\beta,\lambda_N=\alpha$ and for all $n=1,2,\cdots,N$,
$\lambda_n$ and $\lambda_{n-1}$ are the two endpoints of the band $\gamma_n$, then the sign of $P(\lambda_n)$ is crossed. Let's assume $(-1)^nP(\lambda_n) = 2R$, by Corollary \ref{interval spectrum} we just need to prove that $\lambda_1,\lambda_2,\cdots,\lambda_{N-1} \in\R$.

Because
$\lambda_0,\lambda_N$ are single roots of $P(\lambda) - (-1)^n2R=0$, and $\lambda_1,\lambda_2,\cdots,\lambda_{N-1}$ are double roots of $P(\lambda) - (-1)^n2R=0$.  We have found $N$ roots (counting the multiplicity) for $P(\lambda)-2R=0$. Since $P(\lambda)$ is a monic  polynomial of degree $N$, it can be written as 
\begin{equation}\label{P-2R}
	P(\lambda)-2R=(\lambda-\lambda_0)(\lambda-\lambda_2)^2(\lambda-\lambda_4)^2\cdots.
\end{equation}
Similarly, we have
\begin{equation}\label{P+2R}
	P(\lambda)+2R=(\lambda-\lambda_1)^2(\lambda-\lambda_3)^2\cdots. 
\end{equation}
Multiply (\ref{P-2R}) and (\ref{P+2R}), we have
\begin{equation}\label{P2-4R2}
	P^2(\lambda) - 4R^2=(\lambda-\lambda_0)(\lambda-\lambda_1)^2(\lambda-\lambda_2)^2\cdots (\lambda-\lambda_{N-1})^2(\lambda-\lambda_N).
\end{equation}
Combining (\ref{dp}) and (\ref{P2-4R2}), we get the differential equation 
\begin{equation}\label{wffc1}
	\frac{P'(\lambda)}{\sqrt{4R^2- P^2(\lambda)} }=\frac{  N}{\sqrt{(\lambda_0-\lambda)( \lambda-\lambda_N)}}.
\end{equation}

Now, let's solve the equation (\ref{wffc1}).
Let $\widehat{\lambda}= \frac{4}{\beta-\alpha}\cdot (\lambda -\frac{\alpha+\beta}{2})$, then $\widehat{\lambda}_{n} =\frac{4}{\beta-\alpha}\cdot (\lambda_n -\frac{\alpha+\beta}{2})$ for $n=0,1,2,\cdots,N$, and  
equation (\ref{wffc1}) reduced to 
$$\frac{dP}{\sqrt{4R^2- P^2} }=\frac{  Nd\widehat{\lambda}}{\sqrt{4-\widehat{\lambda}^2}}.$$
For any $n=1,2,\cdots,N$, integral from $\widehat{\lambda}_{n}$ to $\widehat{\lambda}_{n-1}$ on both sides, we get
\begin{align*}
	N(-\arccos(\frac{\widehat{\lambda}_{n-1}}{2})+\arccos(\frac{\widehat{\lambda}_{n}}{2}))
	&= -\arccos(\frac{P(\lambda_{n-1})}{2R}) +\arccos(\frac{P(\lambda_{n})}{2R})\\
	&=-\arccos((-1)^{n-1}) +\arccos((-1)^{n})\\
	&=\pm\pi.
\end{align*}
So we have 
$|-\arccos(\frac{\widehat{\lambda}_{n-1}}{2})+\arccos(\frac{\widehat{\lambda}_{n}}{2})|=\frac{\pi}{N}$. For 
$\widehat{\lambda}_{0}=2$ and $\widehat{\lambda}_{N}=-2$, 
$|-\arccos(\frac{\widehat{\lambda}_{N}}{2})+\arccos(\frac{\widehat{\lambda}_{0}}{2})|=\pi$. By the trigonometric inequality 
\begin{align*}
	\pi&=|-\arccos(\frac{\widehat{\lambda}_{N}}{2})+\arccos(\frac{\widehat{\lambda}_{0}}{2})|\\
	&\leq  \sum^{N}_{n=1}|-\arccos(\frac{\widehat{\lambda}_{n-1}}{2})+\arccos(\frac{\widehat{\lambda}_{n}}{2})|  =N\cdot \frac{\pi}{N}=\pi.
\end{align*}
To ensure that the equation holds, we need $\arccos(\frac{\widehat{\lambda}_{n}}{2})=\frac{n\pi}{N},n=0,1,2,\cdots,N$. Thus
$\widehat{\lambda}_{n}=2\cos(\frac{n\pi}{N})$, and 
$\lambda_{n}=\frac{\alpha+\beta}{2}+\frac{\beta-\alpha}{2}\cdot \cos(\frac{n\pi}{N})\in \R$.    \hfill    $\square $    

From a geometric point of view, by Corollary \ref{N bands end to end}, the three conditions of the Theorem \ref{interval spectrum2} mean that if the spectrum $ \sigma (J) $ is a curve connecting two different real numbers $ \alpha $ and $ \beta $, then $ \sigma(J)$ is the line segment connecting them. In \cite{VGP}, V. G. Papanicolaou proved a similar conclusion in different ways under special cases ($a_n=c_n,n=1,2,\cdots,N$).

\noindent{\bf Proof of Theorem \ref{interval spectrum3}. } 
(Sufficiency). For $ \sigma(J)=[\alpha,\beta],a=\bar{c}$ and $\cal{E}=[-2R,2R]\in \R$, with $R=|a|$.
In the proof of Theorem \ref{interval spectrum2}, we have got $\widehat{\lambda}_{n}=2\cos(\frac{n\pi}{N})$, and 
$\lambda_{n}=\frac{\alpha+\beta}{2}+\frac{\beta-\alpha}{2}\cdot \cos(\frac{n\pi}{N})$, for $n=0,1,2,...,N$. Substitute  the transformation $\widehat{\lambda}= \frac{4}{\beta-\alpha}\cdot (\lambda -\frac{\alpha+\beta}{2})$ into (\ref{P-2R}) and (\ref{P+2R}), we have 
\begin{align}\label{P-2R1}
	\nonumber
	P(\lambda)-2R&=(\lambda-\lambda_0)(\lambda-\lambda_2)^2(\lambda-\lambda_4)^2\cdots\\
	\nonumber
	&=(\frac{\beta-\alpha}{4})^N\cdot (\widehat{\lambda}-2)(\widehat{\lambda}-2\cos(\frac{2\pi}{N}))^2(\widehat{\lambda}-2\cos(\frac{4\pi}{N}))^2\cdots.
\end{align}
By Example \ref{unp},  ${P}_N(\lambda)-2=(\lambda-2)(\lambda-2\cos(\frac{2\pi}{N}))^2(\lambda-2\cos(\frac{4\pi}{N}))^2\cdots.$ So, we have 
\begin{equation}\label{P-2R1}
	P(\lambda)-2R=(\frac{\beta-\alpha}{4})^N\cdot(P_N(\widehat{\lambda})-2).
\end{equation}
Similarly, we have
\begin{equation}\label{P+2R1}
	P(\lambda)+2R=(\frac{\beta-\alpha}{4})^N\cdot(P_N(\widehat{\lambda})+2).
\end{equation}
Combine (\ref{P+2R1}) with  (\ref{P-2R1}), we have $R=(\frac{\beta-\alpha}{4})^N$ and 
\begin{equation*}\label{P1}
	P(\lambda)=(\frac{\beta-\alpha}{4})^N\cdot P_N( \frac{4}{\beta-\alpha}\cdot (\lambda -\frac{\alpha+\beta}{2})).
\end{equation*} 

(Necessity).  
For $\cal{E}=[-2R,2R]$, thus $a=\bar{c}$.
Because  $P_N$ has stationary points $ \{2\cos(\frac{n\pi}{N})|n=1,2,\cdots,N-1\},$
so $$ \cal{S}=\{\frac{\alpha+\beta}{2}+\frac{\beta-\alpha}{2}\cos(\frac{n\pi}{N})|n=1,2,\cdots,N-1\},$$ 
$\alpha, \beta\notin \cal{S}$ and $\#\cal{S}=N-1$.
Through direct calculation, there are $P(\cal{S} \cup \{\alpha,\beta\})\subset \{\pm 2R\}$. By Theorem \ref{interval spectrum2}, $ \sigma(J)=[\alpha,\beta]$.  \hfill    $\square $

\begin{Corollary}\label{interval spectrum1} 
	$\sigma(J)=[-2,2]\Leftrightarrow  \cal{E}=[-2,2]$ and $P=P_N$.
\end{Corollary}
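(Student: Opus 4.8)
The plan is to obtain this as the special case $\alpha=-2$, $\beta=2$ of Theorem~\ref{interval spectrum3}. First I would record the elementary identities for these endpoints: $\frac{\beta-\alpha}{4}=1$ and $\frac{\alpha+\beta}{2}=0$, so that the value $R=\bigl(\frac{\beta-\alpha}{4}\bigr)^{N}=1$ and the affine map $\lambda\mapsto\frac{4}{\beta-\alpha}\bigl(\lambda-\frac{\alpha+\beta}{2}\bigr)$ occurring in Theorem~\ref{interval spectrum3} is the identity. Consequently the right-hand side condition there, namely $\cal{E}=[-2R,2R]$ together with $P(\lambda)=\bigl(\frac{\beta-\alpha}{4}\bigr)^{N}P_N\bigl(\frac{4}{\beta-\alpha}(\lambda-\frac{\alpha+\beta}{2})\bigr)$, reduces verbatim to $\cal{E}=[-2,2]$ and $P=P_N$.

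With that reduction in hand both implications are immediate. For the forward direction, if $\sigma(J)=[-2,2]$ then $\sigma(J)$ is a nondegenerate closed interval $[\alpha,\beta]$ with $\alpha=-2<2=\beta$, so Theorem~\ref{interval spectrum3} applies and, after the substitution above, yields $\cal{E}=[-2,2]$ and $P=P_N$. For the reverse direction, if $\cal{E}=[-2,2]$ and $P=P_N$, then these are exactly the hypotheses of the ``if'' part of Theorem~\ref{interval spectrum3} for the choice $\alpha=-2$, $\beta=2$, whence $\sigma(J)=[-2,2]$. (Alternatively, for the reverse direction one could combine Example~\ref{unp}, which already records that $P_N$ has $N-1$ stationary points with $P_N(\cal{S})\subset\{\pm2\}$, with Theorem~\ref{interval spectrum2}; but routing through Theorem~\ref{interval spectrum3} is shorter.)

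There is essentially no obstacle here; the only point worth a line of care is that the parameter $R$ is not over-constrained. Theorem~\ref{interval spectrum3} prescribes $R=\bigl(\frac{\beta-\alpha}{4}\bigr)^{N}$, while the condition $\cal{E}=[-2,2]$ means $\cal{E}=[-2R,2R]$ with $R=|a|=|c|=1$; for $\alpha=-2$, $\beta=2$ both prescriptions give $R=1$, so the two conditions are compatible and nothing further needs to be verified.
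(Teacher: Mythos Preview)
Your proposal is correct and matches the paper's approach: the corollary is placed immediately after Theorem~\ref{interval spectrum3} with no separate proof, precisely because it is the specialization $\alpha=-2$, $\beta=2$ you describe. Your explicit verification that $\frac{\beta-\alpha}{4}=1$, $\frac{\alpha+\beta}{2}=0$, and $R=1$ (so the affine map is the identity and the two prescriptions for $R$ agree) fills in exactly the details the paper leaves implicit.
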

By Corollary \ref{interval spectrum1}, if we want to find operator $J$ such that its spectrum $\sigma(J)$ is interval $[-2,2]$, we can  take $a_n,c_n,n=1,2,\cdots,N$ arbitrarily first, such that $a=\bar{c}$ and $R=|a|=|c|=1$,  and then look for $b_n,n=1,2,\cdots,N$, such that $P={P}_N$.
\begin{Example}
	In this example, we will look for a non-self-adjoint $3$-periodic  Jacobi operator $J$, such that $\sigma(J)=[-2,2]$.
	
	Let $a_1=\frac{1}{2\sqrt{2}}e^\frac{\pi i}{3},a_2=2i,a_3=\sqrt{2}e^\frac{\pi i}{4},c_1=\frac{i}{\sqrt{2}},c_2=e^\frac{\pi i}{12},c_3=\sqrt{2}e^\frac{\pi i}{3}$. Then $a=\bar{c}$ and $R=1$. 
	Direct calculation shows that $P(\lambda)=(\lambda-b_1)(\lambda-b_2)(\lambda-b_3)-a_1c_1(\lambda-b_3)-a_2c_2(\lambda-b_1)-a_3c_3(\lambda-b_2)$. 
	To ensure $P(\lambda)={P}_N(\lambda)=\lambda^3-3\lambda$, by comparing the coefficients of power $\lambda$, 
	we can get several groups of solutions, for example, we can take  $b_1=\sqrt{3-a_1c_1-a_2c_2-a_3c_3},b_2=-b_1,b_3=0$. Then, $\sigma(J)=[-2,2]$. 
\end{Example}

\subsection{ Spectrum on straight line} 
Now, let's consider the case of $ \sigma(J)$ belongs to a straight line in the complex plane $\C$.
\begin{Proposition}\label{line spectrum}
	If $a$ and $c$ are not both $0$, $ \sigma(J)\subset L \subset \C$, where	$L=\{C_0+e^{i\phi}\cdot t| t\in \R\}$ is a straight line in $\C$.
	Then $|a|=|c|\neq 0 $ and 
	\begin{equation}\label{angle}
		\phi \in \{\frac{\varphi+n\pi}{N}|n\in \Z\},
	\end{equation}
	where $\varphi=\frac{\varphi_a+\varphi_c}{2}$.
\end{Proposition}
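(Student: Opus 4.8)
The plan is to split the argument according to whether $\mathcal{E}$ is a genuine ellipse or a degenerate segment, to kill the elliptic case by a topological obstruction, and to treat the segment case by pulling the line $L$ back through a single band and invoking Lemma \ref{rp} on the resulting one–variable polynomial.

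First I would rule out $|a|\neq|c|$. If $|a|\neq|c|$, then $\mathcal{E}$ is a nondegenerate ellipse, and Theorem \ref{main1}(2) gives $\cal{C}_P(\mathcal{E})=1+m_P(\C,\C\setminus\mathring{W})\geq 1$, so $\sigma(J)=P^{-1}(\mathcal{E})$ contains at least one petal, i.e.\ a simple closed curve. No straight line in $\C$ contains a simple closed curve (a circle minus a point is connected, a line minus a point is not), so this contradicts $\sigma(J)\subset L$. Since $a$ and $c$ are not both zero, the only remaining possibility is $|a|=|c|\neq 0$, and then, as recalled in the paragraph preceding the statement, $\mathcal{E}=e^{i\varphi}\cdot[-2R,2R]$ with $R=|a|$ and $\varphi=\frac{\varphi_a+\varphi_c}{2}$.

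Next I would parametrize $L$ by $\lambda(s)=C_0+e^{i\phi}s$, $s\in\R$. By Theorem \ref{main2} the set $\sigma(J)=P^{-1}(\mathcal{E})$ admits a band decomposition; fix one band $\gamma_*$. Since $P$ maps $\gamma_*$ homeomorphically onto the nondegenerate segment $\mathcal{E}$, the arc $\gamma_*\subset L$ is nondegenerate, hence in the parameter $s$ it occupies a nondegenerate interval $[s_1,s_2]$. For all $s\in[s_1,s_2]$ we have $P(\lambda(s))\in\mathcal{E}=e^{i\varphi}[-2R,2R]$, so the polynomial $g(s):=e^{-i\varphi}P\bigl(C_0+e^{i\phi}s\bigr)$ takes values in $[-2R,2R]\subset\R$ on $[s_1,s_2]$. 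By Lemma \ref{rp}, $g$ is a polynomial with real coefficients. Finally, since $P$ is monic of degree $N$, the leading coefficient of $g$ in $s$ is $e^{-i\varphi}e^{iN\phi}=e^{i(N\phi-\varphi)}$; being real and of modulus $1$ it must equal $\pm 1$, so $N\phi-\varphi\in\pi\Z$, i.e.\ $\phi\in\{\frac{\varphi+n\pi}{N}\mid n\in\Z\}$, which is the claim.

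I expect the only nonroutine point to be the first step: one must genuinely be sure that in the elliptic case the spectrum contains a closed curve, and this is exactly what the petal count $\cal{C}_P(\mathcal{E})\geq 1$ in Theorem \ref{main1}(2) provides (alternatively it follows from the local structure in Proposition \ref{local1} together with Corollary \ref{band deco2}). Once the segment case is reached, the rest is the same ``real on an interval $\Rightarrow$ real coefficients'' device already used in the proof of Theorem \ref{real spectrum}, followed by an inspection of the leading coefficient.
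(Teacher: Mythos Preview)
Your proof is correct but takes a different route from the paper. The paper's argument is a one-line reduction: it conjugates the operator via $\widehat{J}=e^{-i\phi}(J-C_0Id)$, which carries $L$ to $\R$, and then invokes Theorem~\ref{real spectrum} to obtain $\widehat{a}=\bar{\widehat{c}}$; unwinding this equality yields both $|a|=|c|$ and the angle condition simultaneously. You instead work directly with the discriminant: you kill the elliptic case by the petal obstruction (a step that in the paper is hidden inside Corollary~\ref{real spectrum0}, itself used in the proof of Theorem~\ref{real spectrum}), and then extract the angle condition by applying Lemma~\ref{rp} to the rotated polynomial $g(s)=e^{-i\varphi}P(C_0+e^{i\phi}s)$ and reading off its leading coefficient. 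Your route is more self-contained---it needs only Lemma~\ref{rp} and the petal count from Theorem~\ref{main1}, not the full characterization of real spectrum---while the paper's route is shorter precisely because that characterization has already been packaged into Theorem~\ref{real spectrum}.
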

\begin{proof}
	Let $\widehat{J} =e^{-i\phi}\cdot (J-C_0Id),$ then $\widehat{a}_n=e^{-i\phi} a_n,\widehat{c}_n=e^{-i\phi} c_n,\widehat{b}_n=e^{-i\phi} (b_n-C_0)$ and we have 
	$$\lambda \in \sigma(J)\subset L \Leftrightarrow e^{-i\phi}\cdot (\lambda -C_0)\in \sigma(\widehat{J} )\subset \R .$$
	
	For $a$ and $c$ are not both $0$, by Theorem \ref{real spectrum}, we have 
	$\widehat{a}= \bar{\widehat{c}} $, where $\widehat{a}=\widehat{a}_1\widehat{a}_2\cdots \widehat{a}_N=e^{-iN\phi} a=Re^{i(\varphi_a-N\phi)}$ and $\widehat{c}=Re^{i(\varphi_c-N\phi)}$. Thus, there is $n\in \Z$, such that 
	$\varphi_a-N\phi+2n\pi=-(\varphi_c-N\phi)$, that is, $\phi=\frac{\varphi_a+\varphi_c+2n\pi}{2N}=\frac{\varphi+n\pi}{N}$.
\end{proof}

By Proposition \ref{line spectrum}, if $a$ and $c$ are given, or $\varphi=\frac{\varphi_a+\varphi_c}{2}$ is given first, then the direction of line $L$ must satisfy (\ref{angle}). For example, let $N=2,\varphi=0$, then the direction of $L$  can only be vertical and horizontal.

By Proposition \ref{line spectrum}, it is easy to construct an example where the spectrum $\sigma(J)$ belongs to the line $L(t)=C_0+e^{i\phi}\cdot t, t\in \R$. We can take any $\widehat{J}$, such that $\sigma(\widehat{J} )\subset \R $. Let $J=e^{i\phi}\cdot \widehat{J}+C_0I $, then $ \sigma(J)\subset L$.

\begin{Proposition}\label{line segment spectrum}
	Let $\alpha,\beta\in \C$ be two distinct numbers, and $\beta-\alpha=r_0e^{i\phi}$ be the corresponding polar coordinate. Then the following three assertions are equivalent.
	
	{\rm(1)}	$ \sigma(J)$ is a curve connecting $\alpha$ and $\beta$.
	
	{\rm(2)}	$ \sigma(J)$ is the line segment connecting $\alpha$ and $\beta$. 	
	
	{\rm(3)}		$\cal{E}=e^{i\varphi}  \cdot  [-2R,2R]$, and $P(\lambda)=(\frac{\beta-\alpha}{4})^N\cdot {P}_N( \frac{4}{\beta-\alpha}\cdot (\lambda -\frac{\alpha+\beta}{2})),$ where  $R=(\frac{r_0}{4})^N,\varphi=N\phi-n\pi,n\in\Z$.
	
\end{Proposition}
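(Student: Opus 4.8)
The plan is to reduce Proposition~\ref{line segment spectrum} to the real‑interval case already settled in Theorem~\ref{interval spectrum2}, Theorem~\ref{interval spectrum3} and Corollary~\ref{interval spectrum1}, via the affine substitution $\psi(\lambda)=\frac{4}{\beta-\alpha}\bigl(\lambda-\frac{\alpha+\beta}{2}\bigr)$, which sends $\alpha\mapsto-2$ and $\beta\mapsto 2$. Set $\widehat J=\frac{4}{\beta-\alpha}\bigl(J-\frac{\alpha+\beta}{2}Id\bigr)$: this is again an $N$‑periodic non‑Hermitian Jacobi operator, with $\widehat a=\bigl(\tfrac{4}{\beta-\alpha}\bigr)^{N}a$, $\widehat c=\bigl(\tfrac{4}{\beta-\alpha}\bigr)^{N}c$, discriminant $\widehat P(\mu)=\bigl(\tfrac{4}{\beta-\alpha}\bigr)^{N}P\bigl(\psi^{-1}(\mu)\bigr)$, curve $\widehat{\mathcal E}=\bigl(\tfrac{4}{\beta-\alpha}\bigr)^{N}\mathcal E$, and $\sigma(\widehat J)=\psi(\sigma(J))$. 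A short polar‑coordinate computation (using $R=(r_0/4)^N$ and $\varphi=N\phi-n\pi$) shows that assertion~(3) for $J$ is exactly ``$\widehat{\mathcal E}=[-2,2]$ and $\widehat P=P_N$'', and that (1) and (2) for $J$ are (1) and (2) for $\widehat J$ with $\alpha,\beta$ replaced by $-2,2$. So it suffices to treat the case $\alpha=-2,\ \beta=2$; from now on I assume this and drop the hats.

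In this normalisation (2)$\Leftrightarrow$(3) is precisely Corollary~\ref{interval spectrum1}, and (2)$\Rightarrow$(1) is trivial, so the content is (1)$\Rightarrow$(2). Assume $\sigma(J)$ is a simple arc with endpoints $-2$ and $2$. Since it is infinite we are not in the case $a=c=0$; since a simple arc contains no simple closed curve, the case $|a|\ne|c|$ is impossible (there $\mathcal E$ is an ellipse and $\sigma(J)$ would contain a petal, by Theorem~\ref{main1}(2)); hence $|a|=|c|=R\ne0$, $\mathcal E=e^{i\vartheta}[-2R,2R]$ for some $\vartheta$, and $\sigma(J)=P^{-1}(\mathcal E)$. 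As $\sigma(J)$ is connected it is a single bouquet, so by Theorem~\ref{main2}(4) we get $m_P(\mathbb C,\mathbb C\setminus\mathcal E)=0$, i.e.\ $P(\mathcal S)\subset\mathcal E$.

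The next step, which I expect to be the crux, is to turn the topological hypothesis ``$\sigma(J)$ is a simple arc'' into algebraic data. By Theorem~\ref{main2}(2) there is a band decomposition $\sigma(J)=\bigcup_{n=1}^N\gamma_n$, and each band maps homeomorphically onto the segment $\mathcal E$, so its two endpoints lie in $P^{-1}(\{\pm 2Re^{i\vartheta}\})$ and are sent to the two distinct endpoints of $\mathcal E$. For $\bigcup\gamma_n$ to be a simple arc with ends $-2,2$, the bands must be glued into a single chain $\gamma_{n_1},\dots,\gamma_{n_N}$ with exactly two free ends, $-2$ and $2$; at each interior node exactly two band‑ends meet, at each free end exactly one; and non‑consecutive bands are disjoint (Corollary~\ref{bands intersect+}). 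Matching this against the local normal forms — Proposition~\ref{local1} ($2(\tau+1)$ arcs meet over an interior point of $\mathcal E$) and Theorem~\ref{main2}(3) ($\tau+1$ arcs meet over an endpoint of $\mathcal E$) — forces: no point of $\mathcal S$ maps into the relative interior of $\mathcal E$ (hence, with $P(\mathcal S)\subset\mathcal E$, one gets $P(\mathcal S)\subset\{\pm2Re^{i\vartheta}\}$); every stationary point has order exactly $1$, so $\#\mathcal S=N-1$; and $\pm2\notin\mathcal S$ with $P(\{-2,2\})\subset\{\pm 2Re^{i\vartheta}\}$, the value of $P$ alternating between the two endpoints of $\mathcal E$ as one runs along the chain.

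It remains to identify $P$. Write $\mathcal S=\{\mu_1,\dots,\mu_{N-1}\}$ and $\{\mu_0,\mu_N\}=\{-2,2\}$; then $P'(\lambda)=N\prod_{j=1}^{N-1}(\lambda-\mu_j)$, and since $P$ takes the value $2Re^{i\vartheta}$ (resp.\ $-2Re^{i\vartheta}$) with multiplicity $2$ at each interior node and multiplicity $1$ at one of $\mu_0,\mu_N$, comparison of monic polynomials of degree $2N$ gives $P(\lambda)^2-4R^2e^{2i\vartheta}=(\lambda^2-4)\prod_{j=1}^{N-1}(\lambda-\mu_j)^2$. Eliminating the product yields $(\lambda^2-4)\,P'(\lambda)^2=N^2\bigl(P(\lambda)^2-4R^2e^{2i\vartheta}\bigr)$; differentiating this polynomial identity and dividing by $P'$ gives the Chebyshev equation $(\lambda^2-4)P''+\lambda P'-N^2P=0$. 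A leading‑coefficient (indicial) argument shows this linear ODE has no polynomial solution of degree $<N$ other than $0$, hence a unique monic solution of degree $N$; since $P_N$ is such a solution (Example~\ref{unp}), $P=P_N$, and feeding this back forces $4R^2e^{2i\vartheta}=4$, i.e.\ $R=1$ and $e^{i\vartheta}=\pm1$, so $\mathcal E=[-2,2]$. Now Corollary~\ref{interval spectrum1} gives $\sigma(J)=[-2,2]$, i.e.\ (2); undoing the affine substitution finishes the proof. The main obstacle is the third paragraph: extracting, rigorously and completely, the combinatorial/analytic picture ($\#\mathcal S=N-1$, all critical points of order $1$, $P(\mathcal S)$ and $P(\pm2)$ on $\partial\mathcal E$ with the alternation pattern, $\pm2\notin\mathcal S$) from the bare statement that $\sigma(J)$ is a simple arc; once that is done, the algebra and the appeal to Corollary~\ref{interval spectrum1} are routine.
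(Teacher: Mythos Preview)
Your proof is correct and shares the paper's overall architecture: both use the affine change $\widehat J=\frac{4}{\beta-\alpha}\bigl(J-\frac{\alpha+\beta}{2}\,Id\bigr)$ to reduce to $\alpha=-2,\ \beta=2$, and both dispatch $(2)\Leftrightarrow(3)$ via Corollary~\ref{interval spectrum1} and $(3)\Rightarrow(1)$ via Corollary~\ref{N bands end to end}. The difference is in the crux $(1)\Rightarrow(2)$. The paper simply cites Theorem~\ref{interval spectrum2} (whose proof, in turn, passes from the factorisations~(\ref{P-2R})--(\ref{P2-4R2}) to the first-order relation~(\ref{wffc1}), integrates using $\arccos$, and uses a triangle-inequality rigidity argument to force the stationary points to be the Chebyshev nodes). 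You instead carry out the combinatorial extraction yourself (which the paper only alludes to in the remark after the proof of Theorem~\ref{interval spectrum2}), obtain the same factorisation $P^2-4R^2e^{2i\vartheta}=(\lambda^2-4)\prod(\lambda-\mu_j)^2$, but then differentiate to get the second-order Chebyshev equation $(\lambda^2-4)P''+\lambda P'-N^2P=0$ and invoke uniqueness of its monic degree-$N$ polynomial solution to conclude $P=P_N$ (and hence $R=1,\ \mathcal E=[-2,2]$). Your ODE/uniqueness route is cleaner and self-contained, avoiding the complex $\arccos$ computation; the paper's route is shorter on the page because it leverages a result already proved. Either way the argument is complete.
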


\begin{proof}
	$(1) \Rightarrow (2)$: Let 
	\begin{equation}\label{trans}
		\widehat{J} =\frac{4}{\beta-\alpha}\cdot (J-\frac{\alpha+\beta}{2}Id),
	\end{equation}
	we have $\widehat{\alpha}=-2,\widehat{\beta}=2$ and
	$$\lambda \in \sigma(J) \Leftrightarrow \widehat{\lambda}= \frac{4}{\beta-\alpha}\cdot (\lambda -\frac{\alpha+\beta}{2})\in \sigma(\widehat{J} ).$$
	For $ \sigma(\widehat{J})$ being a curve connecting $\widehat{\alpha}$ and $\widehat{\beta}$, by Theorem \ref{interval spectrum2}, $ \sigma(\widehat{J})=[-2,2]$ is a line segment, so $\sigma(J)$ is a line segment connecting $\alpha$ and $\beta$. 
	
	$(2) \Rightarrow (3)$: $ \sigma(J)$ is the line segment joining $\alpha$ and $\beta$, so $|a|=|c|\neq 0$ and $\cal{E}=e^{i\varphi}  \cdot  [-2R,2R]$.  By the transformation (\ref{trans}), $\sigma(\widehat{J})=[-2,2]$. 
	By Corollary \ref{interval spectrum1}, $\widehat{P}={P}_N$ and $\widehat{\cal{E}}=[-2,2]$, hence $$P(\lambda)=(\frac{\beta-\alpha}{4})^N\cdot {P}_N( \frac{4}{\beta-\alpha}\cdot (\lambda -\frac{\alpha+\beta}{2})).$$ Substitute $\beta$ into this formula, we have $P(\beta)=2(\frac{\beta-\alpha}{4})^N=2(\frac{r_0}{4})^N\cdot e^{iN\phi}.$ On the other hand, by Theorem \ref{main2}, $P(\beta)\in\{\pm 2Re^{i\varphi}\} $, so $R=(\frac{r_0}{4})^N$. By Proposition \ref{line spectrum}, we have $\varphi=N\phi-n\pi,n\in\Z$.
	
	$(3) \Rightarrow (1)$: For  
	$$ \cal{S}=\{\frac{\alpha+\beta}{2}+\frac{\beta-\alpha}{2}\cos(\frac{n\pi}{N})|n=1,2,\cdots,N-1\},$$ we have  $\#\cal{S}=N-1$.
	Through direct calculation, there are $P(\cal{S} \cup \{\alpha,\beta\})\subset \{\pm 2Re^{i\varphi}\}$. By Corollary \ref{N bands end to end}, $ \sigma(J)$ is composed of $N$ bands end to end, hence $ \sigma(J)$ is a curve connecting $\alpha$ and $\beta$.
\end{proof}

\section*{Acknowledgements}
J.You was partially supported by National Key R \&D Program of China (2020 YFA0713300) and Nankai Zhide Foundation.

\end{document}